\newcommand{\ncm}{\newcommand}
\newtheorem{theorem}{Theorem}[section]
\newtheorem{prop}[theorem]{Proposition}
\newtheorem{problem}[theorem]{Problem}
\newtheorem{lemma}[theorem]{Lemma}
\newtheorem{cor}[theorem]{Corollary}
\newtheorem{lem&def}[theorem]{Lemma \& Definition}
\newtheorem{definition}[theorem]{Definition}
\newtheorem{example}[theorem]{Example}
\newtheorem{remark}[theorem]{Remark}
\def\C{\mathbb{C}\,}
\def\N{\mathbb{N}\,}
\ncm{\Ann}{\mbox{\rm Ann}\,}
\ncm{\End}{\mbox{\rm End}\,}
\def\Hom{\mbox{\rm Hom}\,}
\def\Indec{\mbox{\rm Indec}\,}
\def\|{\, | \,}
\def\id{\mbox{\rm id}}
\def\into{\hookrightarrow}
\def\Ind{\mbox{\rm Ind}}
\def\bra{\langle}
\def\ket{\rangle}
\ncm{\rarr}[1]{\stackrel{#1}{\longrightarrow}}
\ncm{\larr}[1]{\stackrel{#1}{\longleftarrow}}
\def\cop{\Delta}
\def\eps{\varepsilon}
\def\-2{_{(-2)}}
\def\-1{_{(-1)}}
\def\0{_{(0)}}
\def\1{_{(1)}}
\def\2{_{(2)}}
\def\3{_{(3)}}
\def\du1{\hat 1}
\begin{document}
\title[Quotient Modules in Depth]{An in-Depth Look at Quotient Modules}
\author{Lars Kadison} 
\address{Department of Mathematics \\ University of Pennsylvania \\ David Rittenhouse Laboratory \\ 209 S.\ 33rd St. \\
Philadelphia, PA 19104 } 
\email{lkadison@math.upenn.edu} 
\subjclass{16D20, 16D90, 16T05, 18D10, 20C05}  
\keywords{subgroup depth, endomorphism algebra, module similarity, tensor category, Hecke algebra}
\date{} 

\begin{abstract}
The coset $G$-space of a finite group  and a subgroup  is a fundamental module of study of Schur and others around 1930; for example, its endomorphism algebra is a Hecke algebra of double cosets.  We study and review its generalization $Q$ to Hopf subalgebras, especially the tensor powers and similarity as modules over a Hopf algebra, or what's the same, Morita equivalence of the endomorphism algebras.  
We prove that $Q$ has a nonzero integral if and only if the modular function  restricts to the modular function of the Hopf subalgebra.  We also study and organize knowledge of $Q$ and its tensor powers in terms of annihilator ideals, sigma categories, trace ideals, Burnside ring formulas, and when dealing with semisimple Hopf algebras, the depth of $Q$ in terms of the McKay quiver and the Green ring.
\end{abstract} 
\maketitle

\section{Introduction and Preliminaries}
Normality of subgroups and Hopf subalgebras have been studied from  several points of views such as stability under the adjoint representation and equality of certain subsets under multiplication and its opposite.  About ten to twenty years ago, normality was extended to subrings succesfully by a ``depth two'' definition in \cite{KS} using only a tensor product of natural bimodules of the subring pair and module similarity in \cite{AF}.   See for example the paper
\cite{BK} for a key theorem and more background.  After a Galois theory of bialgebroids were associated with depth two subrings in \cite{KS}, some inescapable questions were making themselves known,  "What kind of normality is depth one?"  "What is a subring of depth $n \in \N$, and how weak a notion of normality is this?"  For example, for subgroups and their corresponding group algebra pairs, the answers to these questions are in \cite{BK2} and in \cite{BKK, BDK}, respectively, where it is also noted that subgroup depth of any finite-dimensional pair of group algebras is finite.    

The challenge in  extending the theory of subring depth from group theory to Hopf algebra theory is taken up in among others \cite{K2014, HKY, HKS, HKL}, mostly through a study of a generalization of the cocommutative coalgebra of the finite G-set of cosets.  The focus of these papers is reducing depth computation to that of considerations of tensor power properties of the quotient module coalgebra of a Hopf subalgebra pair, also known to Hopf algebra theorists inspired by  results in algebraic groups. When this module coalgebra $Q$ is viewed in the Green ring, the depth is finite if the corresponding element is algebraic; the depth is closely related to the degree of the minimum polynomial.  In general, it is unknown whether $Q$ is algebraic; although for $Q$ a permutation module over a group, Alperin's theorem that permutation modules are algebraic (see \cite[Feit, IX.3.2]{Fe}) provides a different proof of the result in \cite{BDK} that subgroup depth is finite.   

 This paper continues the study of  the quotient module $Q$; among other things, extending the fundamental theorem of Hopf modules along the lines of Ulbrich in Theorem~\ref{th-ulbrich} below, answering a question, implied in the paper  \cite{BFG} and  restricted to finite dimensions, in terms of a nonzero integral in $Q$ and (ordinary nontwisted) Frobenius extensions (see Theorem~\ref{theorem-answer}), studying a Mackey theory of $Q$ with labels allowing variation of Hopf-group subalgebra (Section~\ref{subsection-Mackey}), and the ascending chain of trace ideals of $Q$ and its tensor powers in Section~\ref{section-traceideal}. In Section~\ref{section-endoQ}, we point out that the endomorphism algebra of $Q$ is a generalized Hecke algebra, define and study from several points of view a tower of endomorphism algebras of increasing tensor powers of $Q$, which is an example of a topic of 
current study of endomorphism algebras of tensor powers of certain modules over various groups and quantum groups.  We show in Propositon~\ref{prop-conv} that the endomorphism algebras of the tensor powers of $Q$ need only be Morita equivalent for two different powers in general to answer the problem in
\cite[p.\ 259]{BDK}.
 In the final section, we examine $Q$ for semisimple Hopf algebra pairs over an algebraically closed field of characteristic zero, and note  relationships with topics of fusion theory such as the McKay quiver and Perron-Frobenius dimension.  
 \subsection{Preliminaries} 
For any ring $A$, and $A$-module $X$, let $1 \cdot X = X$, $2 \cdot X = X \oplus X$, etc.  The similarity relation $\sim$ is defined on $A$-modules as follows.  Two $A$-modules $X,Y$ are similar, written $X \sim Y$, if $X \| n \cdot Y$ and $Y \| m \cdot X$ for some positive integers $m, n$.  This is an equivalence relation, and carries over to isoclasses in the Grothendieck group of $A$, or the Green ring if $A$ is a Hopf algebra.  If $M \sim N$ and $X$ is an $A$-module,
then we have $M \oplus X \sim N \oplus X$; if $\otimes$ is a tensor on  the category of finite-dimensional modules denoted by mod-$A$, then also $M \otimes X \sim N \otimes X$. In case $A$ is a finite-dimensional algebra, $M \sim N$ if and only if $\Indec (M) = \Indec (N)$, where $\Indec (X)$ denotes the set of isoclasses of the indecomposable module constituents of $X$ in its Krull-Schmidt  decomposition. Also $\End M_A$ and $\End N_A$ are Morita equivalent algebras if  $M_A \sim N_A$ (with Morita context bimodules $\Hom (M_A, N_A)$, $\Hom (N_A, M_A)$ and composition of maps); if $M_A \| n \cdot N_A$ for some $n \in \N$, and $\End M_A$, $\End N_A$ are Morita equivalent algebras, then 
$M_A \sim N_A$, as noted below in Proposition~\ref{prop-conv}.  

To a ring extension $B \rightarrow A$, we consider the natural bimodules ${}_BA_B$, ${}_AA_B$
and ${}_BA_A$ (where the first is a restriction of the second or third) in tensor products
$A^{\otimes_B n} = A \otimes_B \cdots \otimes_B A$ ($n$ times $A$, integer $n \geq 1$).
The ring extension $A \| B$ is said to have \textit{h-depth} $2n-1$ (where $n \in \N$) if 
\begin{equation}
\label{eq: simdepth}
A^{\otimes_B n} \sim A^{\otimes_B (n+1)}
\end{equation}
 as $A$-$A$-bimodules (equivalently, $A^e$-modules) \cite{K2012}.  Let $d_h(B,A)$ denote the least such natural number $n$; $d_h(B,A) = \infty$ if there is no such $n$ where similarity of the tensor powers of
$A$ over $B$ holds.  Since for any ring extension $A^{\otimes_B n} \| A^{\otimes_B (n+1)}$ via the identity element and multiplication,
it suffices for h-depth $2n-1$ to check just one condition $A^{\otimes_B (n+1)} \| q \cdot A^{\otimes_B n}$ for some
$q \in \N$. 

\begin{example}
\begin{rm}
If $d_h(B,A) = 1$, then ${}_AA \otimes_B A_A \oplus * \cong q \cdot {}_AA_A$ for some $q \in \N$, the H-separability condition on a ring extension of Hirata (thus, the H and its lower case to avoid confusion with Hopf).  In fact, Hirata proves
this condition alone implies ${}_AA_A \oplus * \cong  {}_AA \otimes_B A$, the separability condition on a ring extension.  
\end{rm}
\end{example}

If this is applied to a Hopf subalgebra pair $R \subseteq H$, the tensor powers of $H$ over $R$
may be rewritten in terms of the tensor product in the finite tensor category mod-$H$ as follows,
\begin{equation}
\label{eq: tensor powers}
H^{\otimes_R (n+1)} \cong H \otimes Q^{\otimes n}
\end{equation}
where $Q^{\otimes n} := Q \otimes \cdots \otimes Q$ ($n$ times $Q$, $\otimes$ the tensor product in mod-$H$) 
(see Eq.~(\ref{eq: fundamental}) below and \cite[Prop.\ 3.6]{K2014}). It follows that the h-depth
$2n+1$ condition of a Hopf subalgebra pair is equivalently 
\begin{equation}
\label{eq: moduledepth}
Q^{\otimes n} \sim Q^{\otimes {n+1}}
\end{equation}
 the depth $n$ condition on a right $H$-module
coalgebra $Q$, where $d(Q_H)$ denotes the least such integer $n \geq 0$ (say $Q^0 = k_H$).
It follows that
\begin{equation}
\label{eq: jpaa}
d_h(R,H) = 2d(Q_H) + 1
\end{equation}
(see \cite[Theorem 5.1]{K2014} for details of the proof).  

Subgroup depth is defined in \cite{BDK} in any characteristic, and in \cite{BKK} over an algebraically closed field $k$ of characteristic zero. If  $G$ is a finite group with subgroup $H$, the minimum depth $d(H,G)$ is determined as the lesser value of two other minimum depths, the minimum even and odd depths. The minimum even depth, $d_{ev}(H,G)$ assumes even natural number values, and is determined  from the  bipartite graph of the inclusion  of the semisimple
group algebras $B = kH \subseteq A = kG$ using \cite[Theorem 3.10]{BKK}, or from $n \in \N$ satisfying Eq.~(\ref{eq: simdepth}) as $A$-$B$-bimodules (equivalently, $B$-$A$-bimodules, the depth $2n$ condition).  The minimum odd depth, $d_{odd}(H,G)$ assumes odd natural number values, and is determined from Eq.~(\ref{eq: simdepth}) viewed this time as $B$-$B$-bimodules (the depth $2n+1$ condition), or from the diameter of the white vertices labelled by the irreducible characters of the subgroup in the bipartite graph  of $H \leq G$ as explained in
\cite[Theorem 3.6]{BKK}.
Then $d(H,G) = \min \{ d_{ev}(H,G), d_{odd}(H,G) \} $.  
Subgroup depth is studied further with many examples in \cite{ D, F, FKR, HHP, HHP2} as well as theoretically in \cite{BDK, BKK, HKY, HKS}, and extended to Hopf subalgebra
pairs in \cite{BDK, BKK, HKY, HKS}. The minimum depth and h-depth
of a Hopf subalgebra pair $R \subseteq H$ (beware the change in $H$!) are closely related by 
\begin{equation}
| d(R,H) - d_h(R,H) | \leq 2
\end{equation}
and both are infinite if one is infinite 
(see \cite{K2012}).  In addition, the authors of \cite{BDK} show that $d(H,G)$ depends  only  on the characteristic of the ground field, and may be labelled accordingly.  Several results on $Q$, depth and normality generalize from Hopf subalgebras to
left coideal subalgebras  of a finite-dimensional Hopf algebra, as noted in \cite{HKS, HKL};  recent papers of Cohen and Westreich advocate this point of view in normality (depth two).  

\begin{example}
\begin{rm}
Note that $d(H,G) = 1$ if the corresponding group algebras satisfy ${}_BA_B \oplus * \cong m \cdot {}_BB_B$, equivalent to $A \cong B \otimes_{Z(B)} C_A(B)$, where $Z(B)$ denote the center of $B$ and $C_A(B)$, 
the centralizer of $B$ in $A$.  For this, $G = H C_G(H)$ is a sufficient condition,  in particular,  $H$ is normal in $G$ \cite{BK2}.
The conjugation action of $G$ on $Z(B)$ spanned by the sum of group elements in a conjugacy class, is computed immediately to be the identity action.  The converse may be proven as an exercise using  \cite[Theorem 1.8]{BK2}.
\begin{prop}
Suppose $B \subseteq A$ are a subalgebra pair of group algebras over a field of characteristic zero corresponding to a  subgroup pair $H \leq G$ where $|G| < \infty$.  The depth $d_0(H,G) = 1$ if and only if the adjoint action of $G$ on $Z(B)$ is the identity. 
\end{prop}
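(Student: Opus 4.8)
The plan is to translate the depth-one condition ${}_BA_B \oplus * \cong m \cdot {}_BB_B$ into a statement about the adjoint action on $Z(B)$, using the semisimplicity available in characteristic zero. First I would recall, via the characterization cited from \cite{BK2}, that $d_0(H,G) = 1$ is equivalent to the H-separability-type condition $A \cong B \otimes_{Z(B)} C_A(B)$ as $B$-bimodules (indeed as algebras). The forward direction is essentially the computation already indicated in the excerpt: if this isomorphism holds, then any $g \in G$ acts on $Z(B)$ through conjugation, but the class sums spanning $Z(B) = Z(kH)$ are permuted among themselves; the point is that conjugation by $g \in G$ fixes each $H$-conjugacy class sum, because the image of $Z(B)$ inside $A$ lies in the "$B$-part" of the tensor decomposition and $G$ normalizes this structure — more concretely, one checks that $C_A(B) \supseteq$ enough of $A$ that $g z g^{-1} = z$ for all $z \in Z(B)$. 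I would spell this out by writing $Z(B)$ on a basis of class sums $\widehat{C}$ and observing that $A = B \cdot C_A(B)$ forces each generator of $G$ to centralize $Z(B)$.

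For the converse — which I expect to be the main obstacle — I would argue as follows. Suppose the adjoint action of $G$ on $Z(B)$ is trivial; I want to produce the algebra isomorphism $A \cong B \otimes_{Z(B)} C_A(B)$, equivalently the multiplication map $B \otimes_{Z(B)} C_A(B) \to A$ is bijective, equivalently (by \cite[Theorem 1.8]{BK2}) the depth-one condition holds. Triviality of the adjoint action means exactly that every $G$-conjugacy class that meets $H$ is a single $H$-conjugacy class — i.e.\ for $h \in H$ and $g \in G$, $ghg^{-1}$ lies in the $H$-conjugacy class of $h$ whenever $ghg^{-1} \in H$ — wait, more precisely it says the class sums are $G$-fixed, which forces: if $x, y \in H$ are $G$-conjugate then they are $H$-conjugate, and moreover for each $h \in H$, $C_G(h) \cap H$ has the same index in ... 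I would instead work directly: triviality of the $G$-action on $Z(kH)$ is equivalent to $C_G(h)H = G$ for every $h \in H$ (so that the $G$-orbit of $h$ equals its $H$-orbit), which is the group-theoretic shadow of $G = H C_G(H)$ localized at each element. From $C_G(h)H = G$ for all $h \in H$ I would deduce the needed factorization of $A$ over $Z(B)$ by a dimension count combined with the surjectivity of multiplication $B \cdot C_A(B) \to A$.

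The delicate point, and where I would spend the most care, is precisely establishing that "$\operatorname{ad}$-action trivial on $Z(B)$" upgrades from a statement about \emph{class sums} to the structural statement $A = B\,C_A(B)$ with the right dimensions over $Z(B)$; the naive guess that trivial action forces $H \trianglelefteq G$ or $G = HC_G(H)$ is \emph{false} in general (those are only sufficient, as the excerpt stresses), so the converse genuinely uses the finer equivalence in \cite[Theorem 1.8]{BK2} rather than any crude normality statement. Concretely, I would (i) show the centralizer $C_A(B)$ is spanned by $G$-conjugacy-class-type sums adapted to the $H$-action and has dimension governed by the orbit data; (ii) show $B \otimes_{Z(B)} C_A(B)$ has the same $k$-dimension as $A$ under the hypothesis, using the orbit-counting identity forced by trivial $\operatorname{ad}$; (iii) conclude the multiplication map is an isomorphism since it is a surjection of equidimensional spaces. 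Then invoke \cite[Theorem 1.8]{BK2} to read off $d_0(H,G) = 1$, completing the equivalence.
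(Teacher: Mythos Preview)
Your overall outline matches the paper's: prove the forward implication by direct computation with class sums, and deduce the converse from the depth-one characterization in \cite[Theorem~1.8]{BK2}. But two points need repair.

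First, a factual slip. You assert that ``trivial action forces $H \trianglelefteq G$ \ldots\ is false in general.'' It is true. If every $H$-class sum $\widehat{C}$ satisfies $g\widehat{C}g^{-1}=\widehat{C}$, then $gCg^{-1}=C$ as subsets of $G$ for each $H$-class $C$; taking $C$ the class of any $h\in H$ gives $ghg^{-1}\in C\subseteq H$, so $H$ is normal. (You are right that trivial action need not force $G=HC_G(H)$: that would require every class-preserving automorphism of $H$ arising from $G$ to be inner in $H$, and class-preserving outer automorphisms exist.) This error is not fatal, but it shows you are fighting a phantom obstruction.

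Second, the converse is not actually carried out. You promise a dimension count and surjectivity of $B\cdot C_A(B)\to A$, but supply neither, and the group-theoretic condition $C_G(h)H=G$ for all $h\in H$ does not by itself produce elements of $C_A(B)$ that span enough of $A$. The clean argument---and almost certainly what the cited \cite[Theorem~1.8]{BK2} packages---uses that the hypothesis is exactly $Z(B)\subseteq Z(A)$. Then the primitive central idempotents $e_1,\dots,e_p$ of $B$ are central in $A$, so $A=\bigoplus_i e_iA$ with each $e_iB\cong M_{n_i}(k)$ a central simple subalgebra of $e_iA$ sharing the same unit. The centralizer theorem for central simple subalgebras gives $e_iA\cong e_iB\otimes_k C_{e_iA}(e_iB)$ for each $i$, and reassembling over $Z(B)=\bigoplus_i ke_i$ yields $A\cong B\otimes_{Z(B)}C_A(B)$, hence depth one. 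Your forward direction is also more tangled than necessary: once $A=B\cdot C_A(B)$, any $a\in A$ is a sum of products $bc$ with $b\in B$, $c\in C_A(B)$, and both factors commute with every $z\in Z(B)$; that is the whole computation.
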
 
\end{rm}
\end{example}


\section{Quotient modules, Integrals and Mackey Theory} 
Let $H$ be a finite-dimensional Hopf algebra over a field $k$.  Given a Hopf subalgebra $R$ of $H$,
let $R^+$ denote the elements of $R$ with zero counit value.  Define the (right) quotient module $Q^H_R := H/R^+H$ (or simply by $Q$ when the context provides an unchanging Hopf subalgebra pair)
which is a right $H$-module coalgebra (since $R^+H$ is a coideal in the coalgebra of $H$).  Denote
the elements of $Q^H_R$ by $Q^H_R = \{ \overline{h} := h + R^+H \| \, h \in H \}$.  Note the canonical epimorphism of right $H$-module coalgebras, $H \rightarrow Q^H_R \rightarrow 0$ given by
$h \mapsto \overline{h}$.   

For more about the quotient module, we refer to  \cite{K2014}, where it is noted
that $Q^H_R \cong k \otimes_R H$
is an $R$-relative projective $H$-module (see also \cite{HKL}), that from the Nichols-Zoeller Theorem,  $\dim Q^H_R = \dim H/ \dim R$, it 
is shown in  \cite[Theorem 3.5]{K2014} that $Q^H_R$ is a projective $H$-module iff  $R$ is semisimple (equivalently for Hopf algebras, separable $k$-algebra),  and that $Q^H_R \cong t_R H$ where $t_R$ is a right integral in $R$. Tensoring an $H$-module $M_H$ by the quotient module $Q^H_R$ is naturally isomorphic to restricting $M$ to $R$, then inducing to an $H$-module:
\begin{equation}
\label{eq: fundamental}
M \otimes_R H \stackrel{\cong}{\longrightarrow} M \otimes Q^H_R
\end{equation}
where the mapping is given by $m \otimes_R h \mapsto mh\1 \otimes \overline{h\2}$, and $\otimes$ denotes the tensor in the tensor category mod-$H$. 
See also \cite{HKY, HKS} for more on $Q^H_R$ and the relationship with depth and h-depth of $R$ in $H$, and extending results to the more general case when $R$ is a left coideal subalgebra of $H$.    

\begin{example}
\label{example-qha}
\begin{rm}
Let $R = k1_H$ and $M = H$ in Eq.~(\ref{eq: fundamental}): then 
\begin{equation}
\label{eq: depthone}
H \otimes_k H_H \cong H_. \otimes H_.
\end{equation}
 for any Hopf algebra $H$, where the righthand side has the diagonal action of $H$.  If $R$ is a normal
Hopf subalgebra of $H$, then $R^+ H = HR^+$ is an Hopf ideal, and so $Q = H/R^+H$ is a Hopf algebra.
It follows from Eq.~(\ref{eq: depthone}) that $Q^{\otimes 2}_H = Q_. \otimes Q_. \cong Q \otimes_k Q_H$ since $H \rightarrow Q$ is a Hopf algebra epimorphism.  But $Q \otimes_k Q_H \cong
(\dim Q) Q_H$, so that $Q^{\otimes 2 } \sim Q_H$, and $d(Q_H) \leq  1$. (Then $d_h(R,H) \leq 3$;
in fact, if $R \neq H$, $d_h(R,H) = 3$ \cite{K2014, HKL}. Note that $d(Q_R) = 0$
since $\overline{h}r = \overline{hr} = \overline{h}\eps(r)$ for each $h \in H, r \in R$.  Then
$d(R, H) \leq 2$.) 
\end{rm}
\end{example}  

\begin{example}
\label{example-group}
\begin{rm}
 Let $H = kG$ be a finite group algebra, $J \leq G$ a subgroup, and $R= kJ$  a Hopf subalgebra of $H$ obviously.  One computes that the quotient module coalgebra $Q^H_R$ in simplified notation $Q^G_J \cong k[J \setminus G]$,
the $k$-coalgebra on the set of right cosets of $J$ in $G$ (and right $H$-module) \cite[Example 3.4]{K2014}.  
\end{rm}
\end{example}

The main problem in the area of Hopf algebra depth is whether in general any one of $d(Q^H_R)$, $d_h(R,H)$,
or $d(R,H)$ is finite \cite{BDK} (which would imply the finiteness of the other two depths). In order to emphasize the point that $Q^H_R$ is an $R$-relative projective $H$-module, we may make the following definition, and prove the next proposition, which formally addresses this problem and extends \cite[Cor.\ 5.8(i)]{K2014}. Let $A \supseteq B$ be an algebra extension of finite-dimensional algebras. 
Recall that a module $V_A$ is \textit{($B$-)relative projective} if the multiplication epimorphism $V \otimes_B A \rightarrow V$ splits as $A$-modules.  For example, if $A= H$ and $B = R$, an $H$-module $V$ is relative projective if the epimorphism $V \otimes \eps_Q$ from $V \otimes Q \rightarrow V$ is $H$-split (by Eq.~(\ref{eq: fundamental})), which is applied to modular representation theory by Carlson and  the recent \cite{La}. 
\begin{definition}
An algebra extension $A \supseteq B$ is said to have \textbf{finite representation type (f.r.t.)} if  only finitely many isoclasses of indecomposable $A$-modules are $B$-relative projective. 
\end{definition}

For example, if either $B$ or $A$ has f.r.t., then the extension $A \supseteq B$ has
f.r.t. For $A$ having f.r.t., it follows immediately; for $B$ having f.r.t., it follows as an exercise from the Krull-Schmidt Theorem and the  fact that
a relative projective $W_A$ satisfies $W \| V \otimes_B A$ for some $B$-module $V$. A semisimple extension $A \supseteq B$ has f.r.t if and only if  $A$  has f.r.t., since every $A$-module is relative projective (cf.\ \cite[Section 3.2]{K2016}).   

For $A= H$ and $B = R$ a finite-dimensional Hopf subalgebra pair, the relative projectives form an ideal $A(H,R)$ in the complex Green ring $A(H)$ of $H$ (see the proof below, Eq.~(\ref{eq: fundamental}) and \cite[II.7]{L}).  Then the Hopf algebra extension $H \supseteq R$ has f.r.t. if and only if $A(H,R)$ is finite-dimensional. 
\begin{prop}
If $H \supseteq R$ is a Hopf algebra extension (as above) having finite representation type, then the depth $d(R,H) < \infty$. 
\end{prop}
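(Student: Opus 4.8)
The plan is to use finite representation type to force the ascending chain $\Indec(Q) \subseteq \Indec(Q^{\otimes 2}) \subseteq \Indec(Q^{\otimes 3}) \subseteq \cdots$ (with $Q = Q^H_R$) to stabilize, and then to pass to $d(R,H)$ via the identities $d_h(R,H) = 2 d(Q_H) + 1$ and $|d(R,H) - d_h(R,H)| \le 2$ recalled in the introduction.

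First I would note that the inclusions in that chain come for free from the module--coalgebra structure of $Q$, with nothing to do with depth: since $Q$ is a right $H$-module coalgebra, $\Delta_Q \colon Q \to Q\otimes Q$ and $\eps_Q \colon Q \to k$ are morphisms of $\mathrm{mod}\text{-}H$, and the counit axiom $(\id_Q \otimes \eps_Q)\circ\Delta_Q = \id_Q$ presents $\Delta_Q$ as a split monomorphism of $H$-modules. Tensoring on the left with $\id_{Q^{\otimes(m-1)}}$ (a $\otimes$-functorial operation on $\mathrm{mod}\text{-}H$, hence one preserving split monomorphisms) shows that $Q^{\otimes m}$ is a direct summand of $Q^{\otimes(m+1)}$ for every $m\ge 1$, so $\Indec(Q^{\otimes m})\subseteq \Indec(Q^{\otimes(m+1)})$. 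Second, each $Q^{\otimes m}$ with $m\ge 1$ is an induced, hence $R$-relative projective, $H$-module: applying Eq.~(\ref{eq: fundamental}) to $M = Q^{\otimes(m-1)}$ (reading $Q^{\otimes 0} := k_H$) gives $Q^{\otimes m}\cong Q^{\otimes(m-1)}\otimes_R H$, and direct summands of relative projectives are relative projective. Since $H\supseteq R$ has finite representation type, there are only finitely many isoclasses of $R$-relative projective indecomposable $H$-modules; hence $\bigcup_{m\ge 1}\Indec(Q^{\otimes m})$ is a finite set.

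An ascending chain of subsets of a finite set stabilizes, so there is an $n\ge 1$ with $\Indec(Q^{\otimes n}) = \Indec(Q^{\otimes(n+1)})$, which for the finite-dimensional algebra $H$ is exactly $Q^{\otimes n}\sim Q^{\otimes(n+1)}$, i.e.\ $d(Q_H)\le n < \infty$. Then Eq.~(\ref{eq: jpaa}) gives $d_h(R,H) = 2 d(Q_H)+1 < \infty$, and $|d(R,H) - d_h(R,H)|\le 2$ yields $d(R,H) < \infty$. I expect the only genuine content to be isolating the finite set in the previous paragraph (everything else is bookkeeping); the point to keep straight is that the summand inclusions $Q^{\otimes m}\mid Q^{\otimes(m+1)}$ are a formal consequence of $Q$ being a module coalgebra and owe nothing to a depth hypothesis. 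Equivalently, in the language of the Green ring: the classes $[Q^{\otimes m}]$ all lie in the ideal $A(H,R)$ of relative projectives, which under the hypothesis is a finite-dimensional $\C$-algebra, so $[Q]$ is algebraic over $\C$ and $d(Q_H)$ is finite --- the module-theoretic argument above being just the explicit shadow of this, using that the powers of $[Q]$ form an increasing chain of effective classes.
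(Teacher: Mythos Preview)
Your proof is correct and follows essentially the same route as the paper: show each $Q^{\otimes m}$ is $R$-relative projective via $Q^{\otimes m}\cong Q^{\otimes(m-1)}\otimes_R H$, use finite representation type to trap the ascending chain $\Indec(Q^{\otimes m})\subseteq\Indec(Q^{\otimes(m+1)})$ inside a finite set, and conclude $d(Q_H)<\infty$. Your version is somewhat more explicit---you justify the splitting $Q^{\otimes m}\mid Q^{\otimes(m+1)}$ directly from the module-coalgebra structure (rather than invoking the general $A^{\otimes_B n}\mid A^{\otimes_B(n+1)}$ fact from the preliminaries), and you spell out the passage from $d(Q_H)<\infty$ to $d(R,H)<\infty$ via Eq.~(\ref{eq: jpaa}) and the inequality $|d(R,H)-d_h(R,H)|\le 2$, which the paper leaves implicit.
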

\begin{proof}
In fact as $H$-modules, $Q = H/R^+H \cong k \otimes_R H$. Then the tensor powers of $Q$  are relative projectives, by an induction argument on the power using $Q^{\otimes (n+1)} \cong Q^{\otimes n} \otimes_R H$.  Since $Q^{\otimes m} \| Q^{\otimes (m+1)}$ for each $m \in \N$, the indecomposable constituents of $Q$ and its tensor powers satisfy $\Indec(Q^{\otimes m}) \subseteq \Indec (Q^{\otimes (m+1)})$
within  a finite set, so that $Q^{\otimes n} \sim Q^{\otimes (n+1)}$ for some $n \in \N$. Then
$d(Q_H) < \infty$.  
\end{proof}

The following proposition is known, but the proof and upper bound are somewhat new.  
\begin{prop}
If $k_R \| R_R$, then $d(Q_H) \leq N +1$ where $N$ is the number of nonisomorphic principle $H$-modules. Consequently, the h-depth $d_h(R,H) \leq 2N+3$. 
\end{prop}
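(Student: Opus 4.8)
The plan is to observe that the hypothesis $k_R \| R_R$ makes $Q$ a direct summand of the regular module $H_H$, and then to run an ascending-chain argument on the sets $\Indec(Q^{\otimes n})$ inside the finite set of principal indecomposable $H$-modules, in the spirit of the proof of the preceding proposition but now with an explicit bound.

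First I would show $Q \| H_H$: applying the functor $-\otimes_R H$ to a splitting $R_R \cong k_R \oplus C$ of right $R$-modules and using $R \otimes_R H \cong H$ together with $k_R \otimes_R H \cong H/R^+H = Q$ yields $H_H \cong Q \oplus (C \otimes_R H)$. (This recovers the known projectivity of $Q$ for semisimple $R$, but it is the summand statement that is used below.) Consequently, for every $n \geq 1$, using $Q \| H_H$ and the tensor identity $H \otimes V \cong (\dim V)\cdot H_H$ for the diagonal action of $H$, one has $Q^{\otimes n} = Q \otimes Q^{\otimes (n-1)} \| H \otimes Q^{\otimes (n-1)} \cong (\dim Q^{\otimes (n-1)})\cdot H_H$, so $Q^{\otimes n}$ is a direct summand of a free $H$-module. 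Hence $\Indec(Q^{\otimes n}) \subseteq \Indec(H_H) =: \mathcal{P}$ for all $n \geq 1$, where $\mathcal{P}$ is the $N$-element set of isoclasses of principal indecomposable (i.e.\ indecomposable projective) $H$-modules.

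Next I would check that the chain $\Indec(Q^{\otimes 1}) \subseteq \Indec(Q^{\otimes 2}) \subseteq \cdots$ is ascending. By~(\ref{eq: fundamental}) applied with $M = Q^{\otimes (n-1)}$ one has $Q^{\otimes n} \cong Q^{\otimes (n-1)} \otimes_R H$, an induced $H$-module; since induced modules are $R$-relative projective, the multiplication epimorphism $Q^{\otimes n}\otimes_R H \to Q^{\otimes n}$ is $H$-split, which together with $Q^{\otimes n}\otimes_R H \cong Q^{\otimes (n+1)}$ (again by~(\ref{eq: fundamental})) gives $Q^{\otimes n} \| Q^{\otimes (n+1)}$, i.e.\ $\Indec(Q^{\otimes n}) \subseteq \Indec(Q^{\otimes (n+1)})$. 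An ascending chain of nonempty subsets of the $N$-element set $\mathcal{P}$ can strictly increase on at most $N-1$ consecutive steps; allowing one further step for the bottom term $Q^{\otimes 0} = k_H$, whose isoclass need not lie in $\mathcal{P}$, there is some $n$ with $0 \leq n \leq N+1$ for which $\Indec(Q^{\otimes n}) = \Indec(Q^{\otimes (n+1)})$, equivalently $Q^{\otimes n} \sim Q^{\otimes (n+1)}$. Thus $d(Q_H) \leq N+1$, and then $d_h(R,H) \leq 2N+3$ follows immediately from $d_h(R,H) = 2d(Q_H) + 1$ in~(\ref{eq: jpaa}).

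I do not expect a genuine obstacle. The two points needing care are the tensor identity $H \otimes V \cong (\dim V)\cdot H_H$, which is what keeps all positive tensor powers of $Q$ among the finitely many principal modules, and the verification that the chain really is ascending --- that each $Q^{\otimes n}$ with $n \geq 1$, being induced from $R$, is a direct summand of $Q^{\otimes (n+1)}$ --- together with the bookkeeping at the bottom of the chain: since $Q^{\otimes 0} = k_H$ need not be projective when $H$ is not semisimple, it may contribute an isoclass outside $\mathcal{P}$, and this is the source of the extra ``$+1$'' in the bound.
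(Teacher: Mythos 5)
Your argument is correct, and its opening move is exactly the paper's: tensor the hypothesis $k_R \| R_R$ by $-\otimes_R H$ and use Eq.~(\ref{eq: fundamental}) to get $Q_H \| H_H$. Where you diverge is in how the bound is extracted. The paper finishes in one line by citing \cite[Lemma 4.4]{K2014} together with $d(H_H) \leq 1$, whereas you unpack that step into a self-contained counting argument: $Q$ is a summand of the regular module, hence (by the tensor identity $H \otimes V \cong (\dim V)\cdot H_H$, which is the two-sided version of Lemma~\ref{lemma-integralformula}) every positive tensor power $Q^{\otimes n}$ is a summand of a free module, so $\Indec(Q^{\otimes n})$ lies in the $N$-element set of principal indecomposables; combined with the ascending chain $Q^{\otimes n} \| Q^{\otimes (n+1)}$ (relative projectivity of induced modules), the chain of $\Indec$-sets must stabilize. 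What your route buys is transparency and, in fact, a marginally sharper conclusion: since the chain of nonempty subsets starts at $n=1$, it must stabilize at some $n \leq N$, giving $d(Q_H) \leq N$ and hence $d_h(R,H) \leq 2N+1$; your extra ``$+1$'' for the bottom term $Q^{\otimes 0}=k_H$ is unnecessary because the depth condition only requires one index $n$ with $Q^{\otimes n} \sim Q^{\otimes (n+1)}$, and the chain from $n=1$ onward already supplies it. The final passage from $d(Q_H)$ to h-depth via Eq.~(\ref{eq: jpaa}) is the same in both treatments.
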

\begin{proof}
(Recall the classic result that $k$ is a direct summand of a Hopf algebra $R$ as right $R$-modules if and only if
$R$ is a semisimple algebra.) Tensoring $k_R \| R_R$ by $-\otimes_R H$ yields $Q_H \| H_H$ by Eq.~(\ref{eq: fundamental})  since $k \otimes_R H \cong Q_H$ and $R \otimes_R H \cong H_H$.  The conclusion follows from \cite[Lemma 4.4]{K2014} and $d(H_H) \leq 1$.
\end{proof}

\subsection{Hopf modules  and their Fundamental Theorem relativized} Fix a Hopf subalgebra $R \subseteq H$ and let $Q$ denote $Q^H_R$ in this subsection. The details of the right $H$-module coalgebra structure on $Q$ inherited via
the canonical epimorphism $H \rightarrow Q$ are as follows:  the coproduct is given by $\cop(\overline{h}) = \overline{h\1}
\otimes \overline{h\2}$, the counit by $\eps_Q(\overline{h}) = \eps(h)$, and
the axioms of a right $H$-module coalgebra are satisfied, $\cop(qh) = q\1h\1 \otimes q\2 h\2$, as well as $\eps_Q(qh) = \eps_Q(q) \eps(h)$ for all $q \in Q, h \in H$.  

We define Ulbrich's category  $\mathcal{M}^Q_H$ with objects $X$ such that
$X_H$ is a module, $X^Q$ is a right comodule of the coalgebra $Q$ (with coaction
$\rho: X \rightarrow X \otimes Q$, $x \mapsto x\0 \otimes x\1$) and the following axiom is satisfied ($\forall x \in X, h \in H$):
\begin{equation}
\rho(xh) = x\0 h\1 \otimes x\1 h\2
\end{equation}
The arrows in this category are right $H$-module, right $Q$-comodule homomorphisms. 
Call $X$ a (right) \textit{$Q$-relative Hopf module}, since  if $R = k1_H$, then $Q = H$ and $X$ is a (right) Hopf module over $H$. 

 Given an object $X$ in this category, the $Q$-coinvariants
are given by \begin{equation}
X^{\mathrm{co}\, Q} = \{ x \in X \| \, \rho(x) = x \otimes \overline{1_H} \}
\end{equation}
Note that $X^{\mathrm{co}\, Q}$ is a right $R$-module, since $\overline{r} = \eps(r) \overline{1_H}$ for each $r \in R$. 

\begin{example}
\begin{rm}
An induced module $W \otimes_R H$ starting with an $R$-module $W_R$ is
naturally an object in $\mathcal{M}^Q_H$. The $H$-module
is given by $(w \otimes_R h)h' = w \otimes_R hh'$ and the coaction by
$w \otimes_R h \mapsto w \otimes_R h\1 \otimes \overline{h\2} $, which is well-defined since $\overline{rh} = \eps(r) \overline{h}$ for all $r \in R, h \in H$. 
\end{rm}
\end{example}

The following is a Fundamental Theorem of $Q$-relative Hopf modules, which is a clarification of  \cite[Theorem 1.3]{U} with a simplified proof. It is also a descent theorem in that it shows how to display any $Q$-relative Hopf modules as an induced $R$-module. 

\begin{theorem}
\label{th-ulbrich}
A $Q$-relative Hopf module  $V$ is an induced module of $V^{\mathrm{co}\, Q}$
in the following way: $V^{\mathrm{co}\, Q} \otimes_R H 
 \stackrel{\cong}{\longrightarrow} V$ via $v \otimes_R h \mapsto vh$.
\end{theorem}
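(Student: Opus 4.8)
The plan is to mimic the classical proof of the Fundamental Theorem of Hopf modules, adapted to the $Q$-relative setting, producing an explicit inverse to the map $v \otimes_R h \mapsto vh$. First I would construct a candidate projection $P\colon V \to V$ onto $V^{\mathrm{co}\,Q}$ using the antipode. The natural guess, modelled on the usual formula, is $P(v) = v\0 S(?)$ — but here $v\1$ lives in the coalgebra $Q$, not in $H$, so $S$ does not directly apply. The fix is to use a set-theoretic (or, better, coalgebra-preserving) section $s\colon Q \to H$ of the canonical epimorphism $\pi\colon H \to Q$ normalized so that $s(\overline{1_H}) = 1_H$, and set $P(v) = v\0\, S(s(v\1))$. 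One then checks that $P(v) \in V^{\mathrm{co}\,Q}$: apply $\rho$, use the relative Hopf module axiom $\rho(xh) = x\0 h\1 \otimes x\1 h\2$ together with the coalgebra map property of $\pi$, and collapse using the antipode identities and $\eps_Q \circ \pi = \eps$. The point is that the ambiguity in the choice of $s$ washes out after applying $\rho$ because $\pi(R^+H) = 0$ and $\rho$ only sees the $Q$-component.

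Next I would define the inverse map $\Phi\colon V \to V^{\mathrm{co}\,Q} \otimes_R H$ by $\Phi(v) = P(v\0) \otimes_R s(v\1)$, and check it is well-defined over $R$ — i.e. independent of the section $s$ — by the same collapsing argument, since two sections differ by something in $R^+H$ and $P(v\0)\otimes_R(-)$ absorbs $R$ on the right. Then I would verify $\Phi$ and the map $\Psi\colon v'\otimes_R h \mapsto v'h$ are mutually inverse. For $\Psi \circ \Phi = \id_V$: $\Psi(\Phi(v)) = P(v\0)\, s(v\1) = v\0\, S(s(v\1)\1)\, s(v\1)\2$, and using $\cop(s(v\1)) $ together with $\pi$ being a coalgebra map so that $(\pi\o\pi)\cop(s(v\1)) = \cop(v\1)$, plus the antipode axiom, this telescopes to $v\0\,\eps_Q(v\1) = v$. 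For $\Phi \circ \Psi = \id$: starting from $v'\otimes_R h$ with $v' \in V^{\mathrm{co}\,Q}$, compute $\rho(v'h) = v' h\1 \otimes \overline{h\2}$ using that $\rho(v') = v'\otimes\overline{1_H}$, so $\Phi(v'h) = P(v' h\1) \otimes_R s(\overline{h\2})$; then $P(v'h\1) = v' h\1 S(s(\overline{h\2})\1)$ and, since $s(\overline{h\2})$ maps under $\pi$ to $\overline{h\2}$, one can pull the $R$-part across the tensor and reassemble, landing back at $v' \otimes_R h$ — here one uses that $\eps(r)$-scaling of $\overline{1_H}$ lets the $R$-relative tensor absorb the discrepancy between $h\2$ and $s(\overline{h\2})$.

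The main obstacle I anticipate is handling the section $s$ cleanly: $\pi$ need not admit a coalgebra-map section, so $s$ is only a linear section, and all the comultiplication manipulations above must be shown to descend to $Q$ regardless. The disciplined way to avoid any dependence on $s$ is to phrase everything in terms of elements of $H$ that happen to reduce mod $R^+H$: work with representatives $h \in H$ for $\overline{h} = v\1 \in Q$, define $P(v) = v\0 S(h\1)$ when $\rho(v) = v\0 \otimes \overline{h}$ (summation implied), and prove independence of the representative $h$ as the very first lemma — this is exactly the statement that $v\0 S(h\1) $ is unchanged when $h$ is replaced by $rh$ with $r \in R^+$, which follows from $S(R^+) \subseteq $ the relevant kernel and the module axiom, or more directly from $\eps(r)=0$ after using the antipode. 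Once that well-definedness lemma is in hand, the rest is the standard telescoping computation and is routine. A secondary, minor point: one must check $\Phi$ is a morphism in $\M^Q_H$, i.e. right $H$-linear and $Q$-colinear, which is immediate from the formulas and the axioms, and that $\Psi$ is too, which is built into the definition of the category.
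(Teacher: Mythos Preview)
Your approach is essentially the same as the paper's: the paper writes the inverse as $v \mapsto v\0 S(v\1) \otimes_R v\2$, where $v\1, v\2$ are chosen representatives in $H$ of the $Q$-coaction (i.e., exactly your $s(v\1)$ and $s(v\2)$), and then checks well-definedness with respect to that choice using $S(r\1 h\1) \otimes_R r\2 h\2 = 0$ for $r \in R^+$ and $R^+\overline{1_H} = 0$. The paper is terser---it defers the verification that $v\0 S(v\1) \in V^{\mathrm{co}\, Q}$ to a standard computation and the mutual-inverse check to the classical Fundamental Theorem---but your more explicit unpacking of the section $s$ and the telescoping identities is the same argument spelled out.
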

\begin{proof}
The proof is given diagramatically in \cite{U}, but it may be noted that
an inverse mapping $V \rightarrow V^{\mathrm{co}\, Q} \otimes_R H$ is
given by $v \mapsto v\0 S(v\1) \otimes_R v\2$, where the coaction is given by
$v \mapsto v\0 \otimes \overline{v\1}$ exploiting $Q = H/R^+H$ with a choice of representative in $H$.  Since $S(r\1h\1) \otimes_R r\2 h\2 = 0$ for $r \in R^+$, this mapping is well-defined with respect to choice of representative.  Since $v\0 S(v\1) \in V^{\mathrm{co}\, Q}$ is a computation as in \cite[p.\ 569]{SY}, again correct regardless of choice of representative due to  $R^+\overline{1} = 0$, the inverse mapping is well-defined. Of course the mapping 
is checked to be an inverse just like in the proof of the Fundamental Theorem of Hopf modules. 
\end{proof}

\subsection{Existence of right $H/R$-integrals}  Given a Hopf subalgebra pair $R \subseteq H$, the paper \cite[p.\ 4]{BFG} defines a \textit{right $H/R$-integral} $t \in H$ as satisfying $th = t\eps(h) + R^+H$ for
every $h \in H$.  The existence of such a nonzero element is of course equivalent to the existence
of a nonzero $\overline{t} \in Q$ satisfying $\overline{t}h = \overline{t}\eps(h)$, also called an integral in $Q$.  Imposing
finite-dimensionality on $H$, we recall theorems in \cite{K2016, HKL} rewritten with this terminology.
\begin{theorem}\cite[''Relative Maschke Theorem'' 3.7]{K2016}
The Hopf subalgebra pair $R \subseteq H$ is a right (or left) semisimple extension $\Leftrightarrow$ $k_H \| Q_H$ $\Leftrightarrow$ $k_H$ is $R$-relative projective $\Leftrightarrow$ there is  a right $H/R$-integral $t \in H$ such that $\eps(t) = 1$ $\Leftrightarrow$ $H$ is a separable extension of $R$. 
\end{theorem}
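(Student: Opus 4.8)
The plan is to prove the four‑way equivalence by running a single cycle of implications, where the three conditions sitting close to $Q$ --- ``$k_H$ is $R$-relative projective'', ``$k_H \| Q_H$'', and ``there is a right $H/R$-integral $t$ with $\eps(t)=1$'' --- are interchangeable almost formally via Eq.~(\ref{eq: fundamental}), and the two ring-theoretic conditions (semisimple extension, separable extension) are attached at either end, one by a classical argument and one by a short Hopf-theoretic computation.

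First I would dispose of the soft part. Putting $M=k_H$ in Eq.~(\ref{eq: fundamental}) gives $k\otimes_R H\cong k\otimes Q\cong Q$ as right $H$-modules (the diagonal action on $k\otimes Q$ collapses to the action on $Q$), and under this identification the multiplication epimorphism $k\otimes_R H\to k$ is carried to the counit $\eps_Q\colon Q\to k$. Hence $k_H$ is $R$-relative projective $\iff$ $\eps_Q$ is $H$-split $\iff$ $k_H\| Q_H$. An $H$-linear section of $\eps_Q$ is precisely the datum of an element $\overline{t}\in Q$ with $\overline{t}h=\eps(h)\overline{t}$ for every $h\in H$ and $\eps_Q(\overline{t})=1$; lifting $\overline{t}$ to $t\in H$, these say exactly that $t$ is a right $H/R$-integral with $\eps(t)=1$. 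So the middle three conditions are equivalent, with the various left--right versions matched up by the antipode.

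Next I would attach ``semisimple extension''. If $H\supseteq R$ is a semisimple extension, then every right $H$-module $M$ is $R$-relative projective, because $M\otimes_R H\to M$ always splits as right $R$-modules (via $m\mapsto m\otimes_R 1$) and a semisimple extension promotes this to an $H$-splitting; taking $M=k_H$ yields relative projectivity of $k_H$. Conversely, if $k_H$ is $R$-relative projective then $k\| Q$ as $H$-modules, so any $M_H$ satisfies $M\cong M\otimes k\| M\otimes Q\cong M\otimes_R H$ by Eq.~(\ref{eq: fundamental}); as induced modules are $R$-relative projective and relative projectivity passes to direct summands, every $M_H$ is $R$-relative projective, and then the usual averaging (combine an $H$-splitting of $M\otimes_R H\to M$ with an $R$-splitting of a given $H$-epimorphism to build an $H$-splitting) shows $H\supseteq R$ is a right, and symmetrically a left, semisimple extension.

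Finally, ``separable extension''. The implication ``separable $\Rightarrow$ semisimple extension'' is Hirata's classical argument: given a separability element $e=\sum e^1\otimes_R e^2\in H\otimes_R H$ with $\sum e^1 e^2=1$ and $he=eh$, an $R$-splitting $\sigma$ of an $H$-epimorphism $g$ is replaced by $y\mapsto\sum\sigma(ye^1)e^2$, which centrality of $e$ makes $H$-linear and $\sum e^1 e^2=1$ makes a section of $g$. For the reverse I would again use Eq.~(\ref{eq: fundamental}), now with $M=H$, to identify the $H$-bimodule $H\otimes_R H$ with $H\otimes Q$ (left action on the first tensorand, right action diagonal), under which $\mu\colon H\otimes_R H\to H$ becomes $\id_H\otimes\eps_Q$. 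If $\overline{t}\in Q$ is a right $H/R$-integral with $\eps_Q(\overline{t})=1$, then $h\mapsto h\otimes\overline{t}$ is a section of $\id_H\otimes\eps_Q$; it is manifestly left $H$-linear, and it is right $H$-linear since $(h\otimes\overline{t})h'=hh'\1\otimes\overline{t}h'\2=hh'\1\otimes\eps(h'\2)\overline{t}=hh'\otimes\overline{t}$. Transporting this section back across Eq.~(\ref{eq: fundamental}) splits $\mu$ as $H$-bimodules, so $H\supseteq R$ is separable, which closes the cycle. I expect no deep obstacle here; the content is bookkeeping, and the one place that repays care is the reverse half of this last step, where the two $H$-actions on $H\otimes_R H\cong H\otimes Q$ must be kept straight and $\mu$ correctly identified with $\id_H\otimes\eps_Q$ --- once that bimodule dictionary is fixed, the $H/R$-integral produces the splitting immediately.
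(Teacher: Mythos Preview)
The paper does not actually prove this theorem: it is merely quoted from \cite[Theorem~3.7]{K2016} as background, with no argument given here. So there is no ``paper's own proof'' to compare against.

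That said, your proposed proof is correct and self-contained. The identification of the three $Q$-adjacent conditions via Eq.~(\ref{eq: fundamental}) with $M=k_H$ is exactly right, as is your observation that an $H$-section of $\eps_Q$ is the same data as a right $H/R$-integral with $\eps(t)=1$. Your link to the semisimple-extension condition is the standard one (tensor $k_H\| Q_H$ by $M$ and use Eq.~(\ref{eq: fundamental}) again), and your separable-extension step is the cleanest route: the bimodule dictionary $H\otimes_R H\cong H\otimes Q$ (left action on the first factor, diagonal right action) carrying $\mu$ to $\id_H\otimes\eps_Q$ is precisely the $M=H$ case of Eq.~(\ref{eq: fundamental}), and then $h\mapsto h\otimes\overline{t}$ is visibly a bimodule section. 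One small remark: in the ``relative projective $\Rightarrow$ semisimple extension'' direction you do not really need the averaging sentence, since over a Frobenius (in particular projective) extension every $H$-module being $R$-relative projective is already one of the standard equivalent formulations of the semisimple-extension condition; but including it does no harm.
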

The article \cite[Corollary 3.8]{K2016} goes on to show that $H$ is an ordinary Frobenius extension of $R$, the Nakayama automorphism and modular function of $H$ restricts to the Nakayama automorphism and modular function of $R$, respectively.  
The theorem above does not deal with a general nonzero $H/R$-integral $t$ where $\eps(t) = 0$. The paper
\cite{BFG} suggests the next two examples.
\begin{example}
\begin{rm}
Let $H \leq G$ be a group-subgroup pair, $k$ any field, and $g_1,\ldots,g_n$ a full set of right coset representative of $H$ in $G$.  Then $t= \sum_{i =1}^n g_i$ is a right $kG/kH$-integral.  Proof:
given $g \in G$, $g_i g = h_i g_{\pi(i)}$ for some $h_i \in H$ and permutation $\pi \in S_n$.  Then
$$tg - t = \sum_i h_i g_{\pi(i)} - \sum_j g_j = \sum_i (h_i - 1) g_i \in [kH]^+ kG.$$
It follows that $ta = t\eps(a)$ for the image $t \in Q$ and $a \in kG$.  Note too that with integral 
$t_H$ and $t_G$ defined as the sum of all groups elements in their respective groups, then
$t_G = t_H t$.  If the characteristic of $k$ divides any of $|H|, |G|, n$, then $\eps(t_H), \eps(t_G), \eps(t)$ is $0$ respectively. 
\end{rm}
\end{example}  
\begin{example}
\begin{rm}
Suppose $R^+H = HR^+$, i.e., $R$ is a normal Hopf subalgebra of $H$, so that $Q$ is the quotient Hopf algebra of $H \supseteq R$.  By the Larson-Sweedler theorem for finite-dimensional Hopf algebras, there is 
a nonzero right integral $\overline{t} \in Q$, then its preimage $t \in H$ is a nonzero right $H/R$-integral.  
\end{rm}
\end{example}
The relative Maschke theorem, the two examples and a third nonexample using the Taft Hopf algebra (\cite[Example 7.12]{NEFE}, \cite[Example 5.6]{K2014} and a short computation using the maximal group algebra within) suggest the following theorem. Recall
that a Hopf algebra $H$ is a $\beta$-Frobenius extension of a Hopf subalgebra $R$, where
$\beta$ is an automorphism of $R$ depending on a difference in Nakayama automorphisms of 
$R$ and $H$, or a difference in modular functions for $R$ and $H$.  In fact,  the modular function $m_H$ of $H$ restricts to the modular function $m_R$  of $R$ precisely when $H$ is an (ordinary) Frobenius extension of $R$: for textbook details on this result by Schneider \textit{et al}, see
 \cite{NEFE, SY}. 
\begin{theorem}
\label{theorem-answer}
Suppose $R \subseteq H$ is a Hopf subalgebra in a finite-dimensional Hopf algebra over a field $k$.
Then there is a nonzero right $H/R$-integral $t \in Q$ if and only if $H$ is a Frobenius extension of $R$
($\Leftrightarrow m_H |_R = m_R$).  
\end{theorem}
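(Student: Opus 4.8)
The plan is to read a nonzero right $H/R$-integral in $Q$ through the right $H$-module isomorphism $Q\cong t_RH$ recalled above (here $t_R$ is a fixed nonzero right integral of $R$): under it, a nonzero $\overline t\in Q$ with $\overline th=\eps(h)\overline t$ for all $h$ is carried to a nonzero element of $H$ killed (up to $\eps$) by right multiplication, i.e.\ to a nonzero right integral of $H$ that happens to lie inside the subspace $t_RH$. Since the right integral space of $H$ is one-dimensional, the existence of a nonzero right $H/R$-integral in $Q$ is equivalent to $t_RH$ containing this whole line of right integrals of $H$. I would then treat the two implications separately.

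For the forward direction, suppose such an integral exists and let $t\in t_RH$ be the corresponding nonzero right integral of $H$. On the one hand $R$ acts on $t_RH$ by left multiplication through a single algebra character, since for $r\in R$ the element $rt_R$ is again a right integral of $R$ and therefore $rt_R=m_R(r)\,t_R$, whence $r(t_Rh)=m_R(r)\,t_Rh$. On the other hand, left multiplication by $r\in R$ on any right integral of $H$ is scaling by $m_H(r)$. Applying both descriptions to the nonzero element $t$ gives $m_R(r)\,t=m_H(r)\,t$ for all $r\in R$, hence $m_H|_R=m_R$, i.e.\ (by the cited criterion) $H\supseteq R$ is a Frobenius extension. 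This comparison is insensitive to which standard normalization of the modular function is used, as restriction commutes with $S$ and with convolution inversion.

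For the converse, assume $m_H|_R=m_R$, so $H\supseteq R$ is an ordinary Frobenius extension; then the induction functor $-\otimes_RH$ and the coinduction functor $\Hom_R(H,-)$ from right $R$-modules to right $H$-modules are naturally isomorphic. Applying this to the trivial module $k_R$ and using $Q\cong k\otimes_RH$ as recalled above, we obtain a right $H$-module isomorphism $Q\cong\Hom_R(H,k)$ ($H$ viewed as a right $R$-module by multiplication). Coinduction is always right adjoint to restriction, so $\Hom_H(k_H,Q)\cong\Hom_H(k_H,\Hom_R(H,k))\cong\Hom_R(k_H|_R,k_R)=\Hom_R(k_R,k_R)=k$, using $\eps_H|_R=\eps_R$. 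In particular $\Hom_H(k_H,Q)\neq0$, and the image of any nonzero $H$-map $k_H\to Q$ is a one-dimensional trivial $H$-submodule of $Q$, spanned by a nonzero $\overline t$ with $\overline th=\eps(h)\overline t$ --- a nonzero right $H/R$-integral in $Q$.

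The reduction and the forward implication are essentially bookkeeping once one has the isomorphism $Q\cong t_RH$ and the one-dimensionality of the integral spaces of $H$ and of $R$. The real content is the converse, where the hypothesis enters precisely through Schneider's $\beta$-Frobenius structure on $H\supseteq R$ with $\beta=\id$; the main care needed there is to track the left/right module conventions so that the coinduced module $\Hom_R(H,k)$ is genuinely identified with $Q$ as a right $H$-module.
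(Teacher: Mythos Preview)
Your forward implication is essentially identical to the paper's: both push a nonzero right $H/R$-integral through the $H$-module isomorphism $Q\cong t_RH$, obtain a nonzero right integral $t_H=t_Rt$ of $H$, and then compare the two descriptions of $rt_H$ for $r\in R$ to conclude $m_H|_R=m_R$.

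Your converse is correct but takes a genuinely different route. The paper argues constructively: given a Frobenius homomorphism $E:{}_RH_R\to{}_RR_R$ with dual bases $\{x_i\},\{y_i\}$, it sets $\overline{t}=\sum_i\eps(x_i)\overline{y_i}$, checks this is an integral in $Q$ by observing that $\sum_i x_i\otimes_R y_i\in (H\otimes_RH)^H$ maps to $\overline t$ under the well-defined $H$-map $x\otimes_R y\mapsto\eps(x)\overline y$, and then proves $\overline t\neq0$ by showing $\{\overline{y_1},\ldots,\overline{y_n}\}$ is a basis of $Q$ (via $\id_Q=\sum_i\eps(E(-x_i))\overline{y_i}$), so that vanishing of $\overline t$ would force all $\eps(x_i)=0$, contradicting the other dual bases equation. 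You instead invoke the categorical characterization of Frobenius extensions (induction $\cong$ coinduction), identify $Q\cong k\otimes_RH\cong\Hom_R(H_R,k_R)$ as right $H$-modules, and read off $\Hom_H(k_H,Q)\cong\Hom_R(k_R,k_R)=k$ from the restriction--coinduction adjunction. The paper's approach buys an explicit formula for the integral in terms of any chosen Frobenius system, which feeds directly into the Frobenius coordinates recorded right after the theorem; your approach is shorter, more conceptual, and in fact yields the sharper statement that the space of right $H/R$-integrals in $Q$ is exactly one-dimensional. Your caveat about tracking left/right conventions is the right one: once you fix that coinduction of right $R$-modules is $M\mapsto\Hom_R(H_R,M_R)$ with $(f\cdot h)(x)=f(hx)$, the adjunction computation goes through as written.
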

\begin{proof}
($\Rightarrow$) Suppose there is a nonzero right integral $\overline{t} \in Q$.  Let $t_R$ be a nonzero right integral in $R$, then $Q \stackrel{\cong}{\rightarrow} t_RH$ as $H$-modules via $q \mapsto t_R q$, since
$t_R R^+H = 0$ \cite[Lemma 3.2]{K2014}. It follows that $t_R t$ is a nonzero right integral in $H$
denoted by $t_H$. Recall that the right modular function $m_H: H \rightarrow k$ is defined by $ht_H = m_H(h)t_H$, with $m_R$ having a similar definition on $R$. Then given $r \in R$,
$$ rt_H = m_H(r)t_H = r t_R t = m_R(r)t_H$$
with the result that $m_H(r) = m_R(r) $ for all $r \in R$.  Therefore $H \supseteq R$ is a Frobenius extension.

($\Leftarrow$) Let $E: {}_RH_R \rightarrow {}_RR_R$ be a Frobenius homomorphism with dual
bases $\{ x_i \}, \{ y_i \}$ ($i = 1,\ldots,n$) \cite{NEFE}. We claim that the element in $Q$,
\begin{equation}
\label{eq: elt}
\overline{t} = \sum_{i=1}^n \eps(x_i) \overline{y_i}
\end{equation}
is a nonzero right integral in $Q$.  Note that the element $\sum_{i=1}^n x_i \otimes_R y_i$
is in $(H \otimes_R H)^H$ and the mapping $H \otimes_R H \rightarrow Q$
defined by $x \otimes_R y \mapsto \eps(x) \overline{y}$ is well-defined, since
$\eps(xr) = \eps(x) \eps(r)$ and $\overline{ry} = \eps(r) \overline{y}$ for each
 $x,y \in H, r \in R$.  It follows that $\overline{t}h = \eps(h) \overline{t}$ for all $h \in H$.  

Note that $E(R^+H) \subseteq R^+ \subseteq R^+H$, so that $E$ induces $ Q \rightarrow k \overline{1_H}$ via $\overline{h} \mapsto \overline{E(h)} = \eps(E(h)) \overline{1}$. 
From the dual bases equation $\id_H = \sum_{i=1}^n E(-x_i)y_i$, we obtain $\id_Q = \sum_{i=1}^n
\eps(E(-x_i)) \overline{y_i}$.  It follows that $\{ \overline{y_1},\ldots,\overline{y_n} \}$ is a basis of $Q$.  
If $\sum_{i=1}^n \eps(x_i) \overline{y_i} = \overline{0}$, then $\eps(x_i) = 0$ for each $i = 1,\ldots,n$.  Then $x_1,\ldots,x_n \in H^+$, which contradicts the dual bases equation
$\sum_{i=1}^n x_i E(y_i -) = \id_H$.  
\end{proof}
For example, a Hopf algebra $H$ within its (always unimodular) Drinfeld double $D(H)$ is a Frobenius extension if and only if $H$ is unimodular.  In general, a Frobenius coordinate system for $H \| R$ in terms of a nonzero integral $t \in Q$
is given by dual bases tensor $S(t\1) \otimes_R t\2$ and Frobenius homomorphism
$E: H \rightarrow R$ given by $E(h) = t^* \rightharpoonup h$, where $t^* \in Q^*$
satisfies $t^*(t) = 1$, $q^* t^* = q^*(\overline{1_H}) t^*$ for every $q^* \in Q^*$ and $rt^* = \eps(r) t^*$ for every $r \in R$.  
\subsection{Short exact sequence of quotient modules for a tower}
Let $K \subseteq R \subseteq H$ be a tower of Hopf subalgebras in a finite-dimensional Hopf algebra $H$.  Note the transitivity lemma.
\begin{lemma}
The quotient modules of the tower $K \subseteq R \subseteq H$ satisfy
$$Q^H_K \cong Q^R_K \otimes_R H$$ as right $H$-modules.
\end{lemma}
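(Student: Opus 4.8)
The plan is to identify both sides with induced modules and exploit associativity of tensor products over the tower. First I would recall from the preliminaries (the discussion around Eq.~(\ref{eq: fundamental})) that for any Hopf subalgebra pair $B \subseteq A$ one has $Q^A_B \cong k \otimes_B A$ as right $A$-modules, where $k$ carries the trivial $B$-module structure via the counit. Applying this to the pair $K \subseteq H$ gives $Q^H_K \cong k \otimes_K H$, and applying it to $K \subseteq R$ gives $Q^R_K \cong k \otimes_K R$ as right $R$-modules.

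Next I would compute the right-hand side: $Q^R_K \otimes_R H \cong (k \otimes_K R) \otimes_R H \cong k \otimes_K (R \otimes_R H) \cong k \otimes_K H$, using the standard associativity isomorphism for tensor products and the canonical identification $R \otimes_R H \cong H$ as $(K,H)$-bimodules. Chaining these isomorphisms yields $Q^R_K \otimes_R H \cong k \otimes_K H \cong Q^H_K$ as right $H$-modules, which is exactly the claim. One should check that the composite isomorphism is the evident one, namely $\overline{r} \otimes_R h \mapsto \overline{rh}$ (image in $Q^H_K = H/K^+H$), and that this is well defined: an element $\kappa^+ r \otimes_R h$ with $\kappa^+ \in K^+$ maps to $\overline{\kappa^+ r h} = 0$ since $K^+ H \subseteq K^+H$, and conversely the map $\overline{h} \mapsto \overline{1_R}\otimes_R h$ respects $K^+H$ since $\overline{K^+} = 0$ in $Q^R_K$ as $K^+ \subseteq R^+_K$-type relations hold.

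I do not expect a serious obstacle here; the statement is essentially transitivity of induction, $\mathrm{Ind}^H_K = \mathrm{Ind}^H_R \circ \mathrm{Ind}^R_K$, transported through the identification of induced trivial modules with quotient modules. The only point requiring a little care is keeping the module structures straight: $Q^R_K \otimes_R H$ uses the right $R$-module structure on $Q^R_K$ (which by the module-coalgebra axiom restricts the right $H$-action, but here we only need it over $R$), and the right $H$-action on the tensor product is on the second factor, matching the right $H$-action on $Q^H_K$ induced from multiplication in $H$. If one prefers to avoid the $k\otimes_K(-)$ route entirely, one can instead write down the map $\overline{r}\otimes_R h \mapsto \overline{rh}$ and its inverse $\overline{h}\mapsto \overline{1_R}\otimes_R h$ directly and verify mutual inversion and $H$-linearity by a one-line computation; either way the argument is short.
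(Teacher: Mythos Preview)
Your proof is correct and follows essentially the same approach as the paper: identify $Q^H_K \cong k \otimes_K H$ and $Q^R_K \cong k \otimes_K R$, then use associativity together with the cancellation $R \otimes_R H \cong H$. Your additional explicit description of the map $\overline{r}\otimes_R h \mapsto \overline{rh}$ and its inverse is a welcome elaboration, but the core argument is the same transitivity-of-induction computation the paper gives.
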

\begin{proof}
This follows from $Q^H_K \cong k \otimes_K H$ (\cite{K2014}), similarly 
$Q^R_K \cong k \otimes_K R$ and the cancellation, 
$R \otimes_R H \cong H$.  
\end{proof}
Since $K^+R \subseteq K^+H \subseteq R^+H$,
we note a short exact sequence,
\begin{equation}
\label{eq: ses1}
 0 \rightarrow R^+H/K^+H \rightarrow H/K^+H \rightarrow H/R^+H \rightarrow 0. 
\end{equation}
Denote the kernel of the counit on $Q^R_K$ by ${}^+Q^R_K$.  
\begin{prop}
The quotient modules of the tower $K \subseteq R \subseteq H$ satisfy
\begin{equation}
\label{eq: ses2}
0 \rightarrow {}^+Q_K^R \otimes_R H \rightarrow Q^H_K \rightarrow Q^H_R \rightarrow 0,
\end{equation}
with respect to canonical mappings. 
\end{prop}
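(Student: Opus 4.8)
The plan is to deduce the exact sequence (\ref{eq: ses2}) from the tautological short exact sequence of right $R$-modules
\[
0 \longrightarrow {}^+Q^R_K \longrightarrow Q^R_K \stackrel{\eps}{\longrightarrow} k \longrightarrow 0,
\]
in which $Q^R_K = R/K^+R$ carries its natural right $R$-action $\overline{r}\cdot r' = \overline{rr'}$, the map is the counit $\overline{r}\mapsto \eps(r)$, and $k$ is the trivial right $R$-module. First I would note that this sequence is exact: since $\eps(rr') = \eps(r)\eps(r')$, the counit is a homomorphism of right $R$-modules onto $k$, and its kernel is by definition ${}^+Q^R_K$ (so in particular ${}^+Q^R_K$ is a right $R$-submodule of $Q^R_K$). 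Then I would apply the functor $-\otimes_R H$ and identify the three resulting terms.

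The key point making this work is that $H$ is \emph{flat} as a left $R$-module: since $R$ is a finite-dimensional Hopf subalgebra, the Nichols--Zoeller theorem makes $H$ free, hence flat, over $R$, so $-\otimes_R H$ is exact. Applying it to the sequence above yields a short exact sequence of right $H$-modules
\[
0 \longrightarrow {}^+Q^R_K \otimes_R H \longrightarrow Q^R_K \otimes_R H \longrightarrow k \otimes_R H \longrightarrow 0.
\]
Now the transitivity lemma gives $Q^R_K \otimes_R H \cong Q^H_K$ as right $H$-modules, and the isomorphism $Q^H_R \cong k\otimes_R H$ (the case $M = k$ of (\ref{eq: fundamental}), recalled in the Preliminaries) identifies the right-hand term. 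Substituting these identifications produces exactly (\ref{eq: ses2}).

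It remains to verify that, under these identifications, the two maps of (\ref{eq: ses2}) are the canonical ones, which is a routine diagram chase. Both the transitivity isomorphism $Q^R_K \otimes_R H \stackrel{\cong}{\longrightarrow} Q^H_K$ and the isomorphism $k \otimes_R H \stackrel{\cong}{\longrightarrow} Q^H_R$ send $\overline{1}\otimes_R h \mapsto \overline{h}$, so the surjection in (\ref{eq: ses2}) is just $h + K^+H \mapsto h + R^+H$, and the injection carries $\overline{r}\otimes_R h$ (for $r \in R^+$) to $rh + K^+H$, with image precisely $R^+H/K^+H$ — in agreement with the short exact sequence (\ref{eq: ses1}). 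The only genuinely needed input beyond formal manipulation is the flatness of $H$ over $R$, and for a finite-dimensional Hopf subalgebra pair that is immediate; hence I do not expect any serious obstacle. (An equivalent route is to identify $R^+H/K^+H$ with ${}^+Q^R_K \otimes_R H$ directly inside (\ref{eq: ses1}), by tensoring $0\to K^+R \to R^+ \to {}^+Q^R_K \to 0$ over $R$ with the flat module $H$ and using $K^+R\otimes_R H \cong K^+H$, $R^+\otimes_R H \cong R^+H$.)
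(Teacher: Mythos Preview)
Your proof is correct and matches the paper's own argument: the paper likewise tensors the short exact sequence $0 \to {}^+Q^R_K \to Q^R_K \to k \to 0$ by the exact functor $-\otimes_R H$ (noting ${}_RH$ is free) and invokes the transitivity lemma, and it also records the alternative route via (\ref{eq: ses1}) together with the explicit identification ${}^+Q^R_K \otimes_R H \cong R^+H/K^+H$. Your verification that the maps are the canonical ones is a welcome extra detail.
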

\begin{proof}
Follows from the short exact sequence~(\ref{eq: ses1}) and the isomorphism
$${}^+Q_K^R \otimes_R H \stackrel{\sim}{\longrightarrow} R^+H/K^+H$$
given by $(r + K^+R) \otimes_R h \mapsto rh + K^+H$ where $r \in R^+$, since the mapping
is surjective between $k$-spaces of equal dimension. 
Also follows from the lemma above and tensoring the short exact sequence,
$$ 0 \rightarrow {}^+Q^R_K \rightarrow Q^R_K \rightarrow k \rightarrow 0$$
by the exact functor $- \otimes_R H$ (as ${}_RH$ is a free module). 
\end{proof}
\begin{cor}
If $R^+H \into H$ is split as right $H$-modules, then $d(Q^H_K)$ and 
 $d(Q^H_R)$ are both finite as right $H$-modules.  
\end{cor}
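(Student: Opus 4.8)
The plan is to show that the splitting hypothesis already forces $R$, and then $K$, to be a semisimple algebra; once that is in hand, $Q^H_R$ and $Q^H_K$ are \emph{projective} right $H$-modules and the finiteness of their depths follows from the Proposition just above (applied twice).

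First I would observe that a splitting of $0 \to R^+H \to H \to Q^H_R \to 0$ realizes $Q^H_R = H/R^+H$ as a direct summand of the free module $H_H$, hence as a projective right $H$-module; by \cite[Theorem 3.5]{K2014} this is equivalent to $R$ being a semisimple $k$-algebra, i.e. $k_R \| R_R$. Next I would bring $K$ into the picture through the transitivity lemma $Q^H_K \cong Q^R_K \otimes_R H$. Since $R$ is now semisimple, the right $R$-module $Q^R_K$ is projective, and applying $-\otimes_R H$ sends it to a projective right $H$-module (a projective $P_R$ is a summand of a free $R^{(I)}$, so $P\otimes_R H$ is a summand of $R^{(I)}\otimes_R H \cong H_H^{(I)}$, hence projective). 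Thus $Q^H_K$ is a projective right $H$-module, and \cite[Theorem 3.5]{K2014} applied to the pair $K \subseteq H$ gives $k_K \| K_K$. (Equivalently, since $K^+H \subseteq R^+H$ one may quotient the assumed splitting of $R^+H \hookrightarrow H$ by $K^+H$ to split the sequences~(\ref{eq: ses1})--(\ref{eq: ses2}), obtaining $Q^H_K \cong ({}^+Q^R_K \otimes_R H) \oplus Q^H_R$ with both summands projective by the same reasoning.)

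With $k_R \| R_R$ and $k_K \| K_K$ established, I would invoke the Proposition above for the pairs $R \subseteq H$ and $K \subseteq H$, concluding $d(Q^H_R) \le N+1$ and $d(Q^H_K) \le N+1$, where $N$ is the number of nonisomorphic principal $H$-modules; in particular both depths are finite (and then so are $d_h(R,H)$, $d_h(K,H)$ by~(\ref{eq: jpaa})). Alternatively one can argue directly, without re-invoking the Proposition: projectives form a tensor ideal of mod-$H$, so every power $(Q^H_R)^{\otimes n}$ and $(Q^H_K)^{\otimes n}$ ($n \geq 1$) is projective and therefore has indecomposable constituents among the finitely many principal indecomposable $H$-modules; moreover $Q^H_R$ (resp.\ $Q^H_K$) is $R$- (resp.\ $K$-)relative projective, so the multiplication epimorphism $Q^{\otimes n}\otimes_R H \to Q^{\otimes n}$ splits while $Q^{\otimes (n+1)} \cong Q^{\otimes n}\otimes_R H$ by~(\ref{eq: fundamental}); hence $\Indec(Q^{\otimes n})$ is an ascending chain of subsets of a finite set and must stabilize, giving $Q^{\otimes n}\sim Q^{\otimes (n+1)}$ for some $n$.

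The one step that calls for a genuine observation is recognizing that the hypothesis is this strong — that $R^+H \hookrightarrow H$ splitting over $H$ is equivalent to semisimplicity of $R$ — after which the argument is essentially bookkeeping; the only real input needed beyond results already stated is that induction from a Hopf subalgebra (along the free regular bimodule) preserves projectivity, which is exactly what propagates the conclusion from $Q^H_R$ to $Q^H_K$.
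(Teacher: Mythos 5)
Your proof is correct, and its overall strategy coincides with the paper's: deduce from the splitting that $Q^H_R$ is a projective $H$-module (hence, by \cite[Theorem 3.5]{K2014}, that $R$ is semisimple), show that $Q^H_K$ is also projective, and conclude that finite depth follows because tensor powers of projectives stay inside the finite set of principal indecomposables. The difference is in the middle step: the paper gets projectivity of $Q^H_K$ by quoting that a Hopf subalgebra of a semisimple Hopf algebra is semisimple \cite[3.2.3]{M} and then applying \cite[Theorem 3.5]{K2014} again, whereas you obtain it from the transitivity lemma $Q^H_K \cong Q^R_K \otimes_R H$ together with the (easy) fact that induction $-\otimes_R H$ preserves projectivity — this is self-contained within the paper's own lemmas and avoids the external citation, at the cost of being marginally longer; your parenthetical splitting of the sequences~(\ref{eq: ses1})--(\ref{eq: ses2}) is a second valid variant of the same idea. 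For the last step the paper simply cites that projective modules in mod-$H$ have finite depth \cite[Prop.\ 4.5]{K2014}, while you either re-derive this via the tensor-ideal/ascending-chain argument (fine, and essentially the proof of the finite-representation-type proposition earlier in the paper) or make a small detour back through semisimplicity of $K$ and $R$ ($k_K \| K_K$, $k_R \| R_R$) to invoke the $d(Q_H)\le N+1$ proposition; that detour is harmless but unnecessary once projectivity of $Q^H_K$ and $Q^H_R$ is in hand. Either way you land the stated conclusion, and you also correctly isolate the key observation, recorded in the paper right after the corollary, that the splitting hypothesis amounts to adding ``$R^+H$ is a projective $H$-module'' to the equivalent characterizations of semisimplicity of $R$.
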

\begin{proof}
Since $R^+H$ is a projective(-injective) $H$-module,
then so is $Q^H_R$ from the short exact sequence,
\begin{equation}
\label{eq: ses3}
 0 \rightarrow R^+H \rightarrow H \rightarrow Q_R^H \rightarrow 0,
\end{equation}
which of course splits. 
Then $R$ is  a semisimple Hopf algebra by \cite[Theorem 3.5]{K2014}.  Then the Hopf subalgebra $K$ is semisimple \cite[3.2.3]{M}. Then 
$Q^H_K$ is projective.  But projective modules in mod-$H$ have finite depth
\cite[Prop.\ 4.5]{K2014}. 
\end{proof}
This proof demonstrates that  a fourth equivalent condition one may add to
\cite[Theorem 3.5]{K2014}, which characterizes the semisimplicity of $R$, is that $R^+H$ is a projective $H$-module (and see the sufficient condition below in Prop.~\ref{prop-gen}). 

\subsection{Mackey Theory for Quotients of Group Algebras}
\label{subsection-Mackey}
We change notation from Hopf to group notation in this subsection. We review some Mackey theory in a special context relevant to establishing an upper bound on h-depth in terms of the number of conjugates intersecting in the core.   Let $G$ be a  finite group and
$H,K \leq G$ be two subgroups, $Q^G_K$ the quotient module $k$-coalgebra as in Example~\ref{example-group}, and $Q^G_K \downarrow_H$ the restriction of $Q^G_K$ from $G$-module to $H$-module.  Recall that $Q^G_K \cong k \otimes_{kK} kG$ which is the induced module
denoted by $k \uparrow^G$. 

If $N$ is an arbitrary $K$-module, and $g_i \in K \setminus G / H$ is a set of double coset representatives of $K,H$ in $G$, and $K^{g}$ denotes the conjugate subgroup $g^{-1}K g$ for $g \in G$,  Mackey's formula for the induced $G$-module of $N$ restricted to $H$ is given by

 \begin{equation}
\label{eq: Mackey}
N\uparrow^G \downarrow_H \cong \sum_{g_i \in  K \setminus G / H} \oplus \, N\! \otimes_K\! g_i \downarrow_{K^{g_i} \cap H} \uparrow^H
\end{equation}
It follows from an application to $N = k$ that
\begin{equation}
\label{eq: Quacky}
Q_K^G \downarrow_H \cong \sum_{g_i \in  K \setminus G / H}\! \oplus \,  Q^H_{K^{g_i} \cap H}
\end{equation}
By Eq.~(\ref{eq: fundamental}), we note that $Q^G_K \otimes Q^G_H \cong Q^G_K \otimes_{kH} kG$ as $G$-modules,
then applying Eq.~(\ref{eq: Quacky}) and the transitivity lemma obtains
\begin{equation}
\label{eq: Quacky2}
Q^G_K \otimes Q^G_H \cong \sum_{g_i \in  K \setminus G / H}\! \oplus \,  Q^G_{K^{g_i} \cap H} 
\end{equation}
 It follows from induction (alternatively, the Mackey Tensor Product Theorem) that the tensor powers of $Q^G_H$ in mod-$G$ are given by 
\begin{equation}
\label{eq: tensorpowers}
(Q^G_H)^{\otimes( n+1)} \cong \sum^{}_{g_{i_1},\ldots,g_{i_n} \in  H \setminus G / H } \!\oplus \, Q^G_{H^{g_{i_1}} \cap \cdots \cap H^{g_{i_n}} \cap H} 
\end{equation}
(in all $|H: G : H |^{n-1}$ nonunique $Q$-summands). 
Recall that the core of a subgroup $H \leq G$ is the largest normal subgroup of $G$ within $H$. Also core$_H(G) = \cap_{g \in G} H^g$.

\begin{theorem}
\label{theorem-sep}
Suppose $R = kH$ is a separable $k$-algebra.   \newline If  there are elements of $G$ such that 
core$_H(G) = H^{g'_1} \cap \cdots \cap H^{g'_r} \cap H $,  \newline 
then $d(Q_H^G) \leq r+1$.
\end{theorem}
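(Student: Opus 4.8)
The plan is to prove the sharper statement that $\Indec\big((Q^G_H)^{\otimes m}\big)$ stabilizes for $m \geq r+1$, equal there to $\Indec(Q^G_N)$ where $N := \mathrm{core}_H(G) = \bigcap_{g\in G} H^g$; since $kG$ is finite-dimensional, $\sim$ of $kG$-modules is detected by $\Indec$, so this yields $(Q^G_H)^{\otimes(r+1)} \sim (Q^G_H)^{\otimes(r+2)}$ and hence $d(Q^G_H) \leq r+1$. There are two halves: $Q^G_N$ dominates every tensor power, and $Q^G_N$ already occurs in the $(r+1)$-st.

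First I would record the shape of the tensor powers. Iterating Eq.~(\ref{eq: Quacky2}) --- which is the Mackey Tensor Product computation underlying Eq.~(\ref{eq: tensorpowers}) --- gives, for every $n \geq 1$,
\[
(Q^G_H)^{\otimes(n+1)} \;\sim\; \bigoplus_{x_1,\dots,x_n\in G} Q^G_{H\cap H^{x_1}\cap\cdots\cap H^{x_n}},
\]
where one may let the $x_i$ run over all of $G$ rather than over iterated double coset representatives, since $Q^G_W \cong Q^G_{W^g}$ as right $G$-modules and passing between representatives of a single double coset only conjugates the resulting intersection. Every subgroup $W$ indexing a summand on the right then satisfies $N \subseteq W \subseteq H$.

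Second, I would establish a monotonicity principle for constituents: for subgroups $W' \subseteq W \subseteq H$,
\[
\Indec\big(Q^G_W\big) \subseteq \Indec\big(Q^G_{W'}\big).
\]
Indeed, recalling $Q^G_{W'} \cong k\otimes_{kW'} kG$, transitivity of induction gives $Q^G_{W'} \cong k[W'\setminus W]\otimes_{kW} kG$; since $R = kH$ is separable so is the subgroup algebra $kW$, whence the trivial $kW$-module is a direct summand of the permutation module $k[W'\setminus W]$ and $Q^G_W$ is a direct summand of $Q^G_{W'}$. Applying this with $W' = N$ to each summand in the previous display gives $\Indec\big((Q^G_H)^{\otimes m}\big) \subseteq \Indec(Q^G_N)$ for all $m \geq 1$. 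For the reverse inclusion, insert $(x_1,\dots,x_r) = (g'_1,\dots,g'_r)$ into that display at level $n = r$: by hypothesis $H\cap H^{g'_1}\cap\cdots\cap H^{g'_r} = N$, so $Q^G_N$ occurs among the summands, whence $\Indec(Q^G_N) \subseteq \Indec\big((Q^G_H)^{\otimes(r+1)}\big)$.

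Combining these with the fact that $\Indec\big((Q^G_H)^{\otimes m}\big)$ is nondecreasing in $m$ (each $Q^{\otimes m}$ being a summand of $Q^{\otimes(m+1)}$), one gets
\[
\Indec(Q^G_N) \subseteq \Indec\big((Q^G_H)^{\otimes(r+1)}\big) \subseteq \Indec\big((Q^G_H)^{\otimes m}\big) \subseteq \Indec(Q^G_N)
\]
for every $m \geq r+1$, so equality holds throughout; in particular $(Q^G_H)^{\otimes(r+1)} \sim (Q^G_H)^{\otimes(r+2)}$, that is, $d(Q^G_H) \leq r+1$. The step I expect to need real care is the first displayed decomposition: verifying that the subgroups indexing the summands of $(Q^G_H)^{\otimes(r+1)}$ genuinely range over all $r$-fold intersections $H\cap H^{x_1}\cap\cdots\cap H^{x_r}$ with $x_i\in G$ (not merely those coming from iterated double coset representatives), so that the particular conjugates $g'_1,\dots,g'_r$ of the hypothesis may be plugged in. Granting that, the remaining ingredients --- Maschke's theorem for $kW$, splitting off the trivial summand, and Krull-Schmidt --- are routine.
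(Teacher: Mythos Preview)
Your proof is correct and follows essentially the same route as the paper. Both arguments use the Mackey decomposition~(\ref{eq: tensorpowers}) of the tensor powers, both hinge on the divisibility $Q^G_W \mid Q^G_{W'}$ whenever $W' \leq W \leq H$ (available because every subgroup of $H$ has semisimple group algebra), and both conclude by noting that $Q^G_N$ appears at level $r+1$ and dominates every summand at higher levels. The only cosmetic difference is in how the divisibility is established: the paper invokes the short exact sequence~(\ref{eq: ses2}) and observes that its leftmost term is projective-injective, hence the sequence splits; you obtain the same splitting by writing $Q^G_{W'} \cong k[W'\setminus W]\otimes_{kW} kG$ and splitting off the trivial $kW$-summand via Maschke. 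These are the same argument in different packaging.

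Your flagged concern is legitimate and the paper glosses over it as well: Eq.~(\ref{eq: tensorpowers}) is indexed by iterated double coset representatives, whereas the hypothesis only supplies arbitrary $g'_1,\dots,g'_r \in G$. The cleanest resolution is to view $(Q^G_H)^{\otimes(r+1)}$ as the permutation module on the $G$-set $(H\setminus G)^{r+1}$; the stabilizer of the tuple $(H,\,Hg'_1,\dots,Hg'_r)$ is exactly $H\cap H^{g'_1}\cap\cdots\cap H^{g'_r} = N$, so $Q^G_N$ is a direct summand without any need to match the $g'_i$ to double coset representatives.
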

\begin{proof}
Let $Q = Q^G_H$. 
It suffices to prove the similarity, $Q^{\otimes (r+1)} \sim Q^{\otimes (r+2)}$. The tensor powers of
$Q$ are given by Eq.~(\ref{eq: tensorpowers}).  Since $Q^{\otimes (r+1)} \| Q^{\otimes (r+2)}$, 
we are left with showing that an arbitrary $Q$-summand 
$Q^G_{H^{g_1} \cap \cdots \cap H^{g_{r+1}} \cap H}$ in $Q^{\otimes (r+2)}$ divides a multiple of
a $Q$-summand in $Q^{\otimes (r+1)}$, which we can take to be $Q^G_{\mbox{core}_H(G)}$ from
the hypothesis on the core.  
The proof follows from applying the short exact sequence~(\ref{eq: ses2}) to the group algebras of
the tower core$_H(G) \leq H^{g_1} \cap \cdots \cap H^{g_{r+1}} \cap H \leq G$.  Since the characteristic of $k$ does not divide the order of $H$ (by hypothesis), it does not divide the order
of core$_H(G)$ or any other subgroup of $H$.  Thus their group algebras are semisimple.  It
follows that the leftmost module of the short exact sequence~(\ref{eq: ses2}) we are considering
is projective-injective, whence the sequence splits.  That is $Q^G_{H^{g_1} \cap \cdots \cap H^{g_{r+1}} \cap H} \| Q^G_{\mbox{core}_H(G)}$ indeed.   
\end{proof}
Recall from Section~1 that for a Hopf algebra-Hopf subalgebra pair $R' \subseteq H'$, the
h-depth satisfies $d_h(R',H') = 2 d(Q^{H'}_{R'}) +1$.  From this follows the corollary. 
\begin{cor}
\label{cor-h}
Given a subgroup and ground field under the hypotheses of Theorem~\ref{theorem-sep}, the h-depth satisfies  $d_h(kH, kG) \leq 2r+3$.
\end{cor}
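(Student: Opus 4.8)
The plan is to obtain Corollary~\ref{cor-h} as an immediate bookkeeping consequence of Theorem~\ref{theorem-sep} together with the h-depth formula of Eq.~(\ref{eq: jpaa}). First I would observe that a group algebra is a (cocommutative) finite-dimensional Hopf algebra and that $kH$ is a Hopf subalgebra of $kG$, so that the pair $(kH,kG)$ is exactly a finite-dimensional Hopf subalgebra pair of the kind to which Eq.~(\ref{eq: jpaa}) applies; moreover its quotient module coalgebra is the coset coalgebra $k[H \setminus G] \cong Q^G_H$ of Example~\ref{example-group}. Hence
\[
d_h(kH, kG) = 2\, d(Q^G_H) + 1 .
\]

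Next I would feed in the hypotheses of Theorem~\ref{theorem-sep} --- that $kH$ is a separable $k$-algebra and that the core is realized as an intersection $\mathrm{core}_H(G) = H^{g'_1} \cap \cdots \cap H^{g'_r} \cap H$ --- which by that theorem gives $d(Q^G_H) \leq r+1$. Substituting into the displayed identity yields
\[
d_h(kH, kG) = 2\, d(Q^G_H) + 1 \;\leq\; 2(r+1) + 1 \;=\; 2r + 3 ,
\]
which is the claimed inequality.

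The only care needed is to confirm that the two inputs line up: that Eq.~(\ref{eq: jpaa}), stated for Hopf subalgebra pairs, genuinely specializes to group-algebra pairs (immediate from Example~\ref{example-group}), and that the separability hypothesis on $kH$ and the core hypothesis are precisely those of Theorem~\ref{theorem-sep}. I do not expect any real obstacle here: all of the substantive work --- the Mackey-theoretic decomposition of the tensor powers of $Q^G_H$ in Eq.~(\ref{eq: tensorpowers}) and the splitting of the short exact sequence Eq.~(\ref{eq: ses2}) for the tower $\mathrm{core}_H(G) \leq H^{g_1} \cap \cdots \cap H^{g_{r+1}} \cap H \leq G$ --- is already carried out in the proof of Theorem~\ref{theorem-sep}, so the corollary reduces to the one-line deduction above.
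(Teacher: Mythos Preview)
Your proposal is correct and follows essentially the same argument as the paper: the corollary is derived immediately from Theorem~\ref{theorem-sep} (giving $d(Q^G_H)\le r+1$) together with the h-depth identity $d_h(R',H')=2d(Q^{H'}_{R'})+1$ of Eq.~(\ref{eq: jpaa}), specialized to the group-algebra pair. There is nothing to add.
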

Combinatorial depth is first defined in \cite{BDK}.  A certain simplification in the definition of minimum even combinatorial depth of a subgroup pair $H \leq G$, denoted by $d_c^{ev}(H,G)$,were highlighted in \cite{HHP2}as follows.  Let $\mathcal{F}_0 = \{ H \}$ and for each $i \in \N$,  $$\mathcal{F}_i = \{ H \cap H^{x_1} \cap \cdots \cap H^{x_i} \| \, x_1,\ldots,x_i \in G \}. $$
Note that $\mathcal{F}_0 \subseteq \mathcal{F}_1 \subseteq \mathcal{F}_2 \subseteq \cdots$. 
If the sequence of subsets ascends strictly until $\mathcal{F}_{n-1} = \mathcal{F}_n$, then
$d_c^{ev}(H,G) = 2n$.  Minimum combinatorial depth $d_c(H,G)$ satisfies $d^{ev}_c(H,G) -1 \leq d_c(H,G) \leq d^{ev}_c(H,G)$; the precise determination is explained in \cite{BDK, HHP, HHP2}. A particularly easy characterization is
$d_c(H,G) = 1$ if and only if $G = H C_G(H)$ \cite{BDK}.  
\begin{prop}
 Under the hypotheses on the subgroup pair $H \leq G$ and the ground field $k$ in Theorem~\ref{theorem-sep}, h-depth and combinatorial depth satisfy
$$d_h(H,G) \leq d_c(H,G) + 2.$$
\end{prop}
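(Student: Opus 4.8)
The plan is to reduce the claim to Corollary~\ref{cor-h} by reading off, from the combinatorial data that defines $d_c(H,G)$, an expression of $\mathrm{core}_H(G)$ as an intersection of few conjugates of $H$. Write $d_c^{ev}(H,G) = 2n$, so that the filtration satisfies $\mathcal{F}_0 \subsetneq \mathcal{F}_1 \subsetneq \cdots \subsetneq \mathcal{F}_{n-1}$ and $\mathcal{F}_{n-1} = \mathcal{F}_n$. Since the separability hypothesis on $kH$ from Theorem~\ref{theorem-sep} is in force, all I need is to locate the core of $H$ at level $n-1$ of this filtration and then feed $r = n-1$ into Corollary~\ref{cor-h}.

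First I would show that the ascending chain $(\mathcal{F}_i)$ is constant from index $n-1$ on. Given $m \ge n-1$ and $F = H \cap H^{x_1} \cap \cdots \cap H^{x_{m+1}} \in \mathcal{F}_{m+1}$, the truncation $F' = H \cap H^{x_1} \cap \cdots \cap H^{x_m}$ lies in $\mathcal{F}_m$, which equals $\mathcal{F}_{n-1}$ by induction; hence $F' = H \cap H^{y_1} \cap \cdots \cap H^{y_{n-1}}$ for suitable $y_i \in G$, and then $F = F' \cap H^{x_{m+1}} \in \mathcal{F}_n = \mathcal{F}_{n-1}$. Thus $\mathcal{F}_m = \mathcal{F}_{n-1}$ for all $m \ge n-1$. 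Next, since $G$ is finite, $\mathrm{core}_H(G) = \bigcap_{x \in G} H^x$ is an intersection of finitely many conjugates of $H$ (one of which is $H$ itself), so it lies in some $\mathcal{F}_m$; by the stabilization just proved, $\mathrm{core}_H(G) \in \mathcal{F}_{n-1}$, i.e.\ $\mathrm{core}_H(G) = H \cap H^{x_1} \cap \cdots \cap H^{x_{n-1}}$ for suitable $x_i \in G$. This is exactly the hypothesis of Theorem~\ref{theorem-sep} and Corollary~\ref{cor-h} with $r = n-1$.

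Finally, Corollary~\ref{cor-h} yields $d_h(kH,kG) \le 2(n-1) + 3 = 2n+1$, while the relation $d_c^{ev}(H,G) - 1 \le d_c(H,G)$ gives $d_c(H,G) \ge 2n - 1$; combining, $d_h(H,G) \le 2n+1 = (2n-1) + 2 \le d_c(H,G) + 2$, as required. I expect the one point needing care to be the claim that $\mathrm{core}_H(G)$ already appears at level $n-1$ of the filtration $(\mathcal{F}_i)$ rather than merely at some later, possibly larger, level — this is precisely what the stabilization step supplies; everything else is bookkeeping with the inclusions $\mathcal{F}_i \subseteq \mathcal{F}_{i+1}$ and the previously recorded identities $d_h(R',H') = 2 d(Q^{H'}_{R'}) + 1$ and Theorem~\ref{theorem-sep}.
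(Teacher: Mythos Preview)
Your proof is correct, but it takes a different path from the paper's. The paper bypasses Theorem~\ref{theorem-sep} and Corollary~\ref{cor-h} entirely: starting from $d^{ev}_c(H,G)=2n$ and hence $\mathcal{F}_{n-1}=\mathcal{F}_n$, it reads off directly from the Mackey decomposition~(\ref{eq: tensorpowers}) that every summand $Q^G_K$ of $Q^{\otimes(n+1)}$ (indexed by subgroups $K\in\mathcal{F}_n$) already occurs among the summands of $Q^{\otimes n}$ (indexed by $\mathcal{F}_{n-1}$), so $Q^{\otimes n}\sim Q^{\otimes(n+1)}$, $d(Q_G)\le n$, and $d_h(H,G)\le 2n+1$; the final chain $2n+1 = d^{ev}_c(H,G)+1 \le d_c(H,G)+2$ is then identical to yours. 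Your route instead extends the stabilization to all $m\ge n-1$, locates $\mathrm{core}_H(G)$ in $\mathcal{F}_{n-1}$, and feeds $r=n-1$ into Corollary~\ref{cor-h}. The paper's argument is shorter and never passes through the core; your argument is a bit longer but has the pleasant by-product of exhibiting $\mathrm{core}_H(G)$ as an intersection of $H$ with only $n-1$ conjugates, making the connection back to the hypothesis of Theorem~\ref{theorem-sep} explicit.
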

\begin{proof}
Suppose $d^{ev}_c(H,G) = 2n$.  Then $\mathcal{F}_{n-1} = \mathcal{F}_n$.  A look at formula~(\ref{eq: tensorpowers}) for the tensor powers of $Q^G_H := Q$ reveals that $Q^{\otimes n} \sim Q^{\otimes (n+1)}$.  Hence $d(Q_G) \leq n$, and so $d_h(H,G) \leq 2n+1$, i.e.,
$d_h(H,G) \leq d^{ev}_c(H,G) + 1 \leq d_c(H,G) + 2$.  
\end{proof}
\begin{example}
\begin{rm}
Suppose $H \triangleleft G$.  It follows we may apply the theorem and corollary with $r = 0$.
Then $d_h(kH,kG) \leq 3$. Also combinatorial depth is $d_c(H,G) \leq 2$ \cite{BDK} since $\mathcal{F}_0 = \mathcal{F}_1$. 

Next consider the permutation groups $S_n \leq S_{n+1}$. It is an exercise that \newline   core$_{S_n}(S_{n+1}) = \{ (1) \}$ and it takes only $r = n-1$ conjugate subgroups of $S_n$ to intersect trivially \cite{BKK}. By Corollary~\ref{cor-h} $d_h(\C S_n, \C S_{n+1}) \leq 2n+1$.
In fact, $d_h(\C S_n, \C S_{n+1}) = 2n+1$ by \cite[Lemma 5.4]{K2014}.  Also $d_c(S_n,S_{n+1}) = 2n-1$ \cite{BDK}.  It follows that the inequality in the proposition cannot be improved in general.  
\end{rm}
\end{example}
\begin{example}
\begin{rm}
Suppose $H < G$ is a non-normal trivial-intersection (TI) subgroup of a finite group; i.e., $H \cap gHg^{-1} = E := \{ 1_G \}$ for every $g \in G - N_G(H) \neq \emptyset$.  It follows that $\mathcal{F}_1 = \{ H, E \} = \mathcal{F}_2$, so
$d^{ev}_c(H,G) = 4$ (\cite{BDK} shows $d_c(H,G) = 3$).  It may also be computed easily 
that Eq.~(\ref{eq: Quacky}) reduces to $Q_H \cong m_1 \cdot k_{\eps} \oplus m_2 \cdot kH$ for some $m_i \in \N$, and that Eq.~(\ref{eq: Quacky2}) reduces to $Q^{\otimes 2}_H \cong s_1 \cdot Q_H \oplus s_2 \cdot kH$ for some $s_i \in \N$ ($i=1,2$).
We conclude that $d(Q_H) = 1$ and $d_{ev}(kH,kG) = 4$.  Moreover, $Q^{\otimes 2} \cong n_1 \cdot Q \oplus n_2 \cdot kG$, for some $n_i \in \N$, from which we conclude that $d(Q_G) = 2$ and  $d_h(kH,kG) = 5$.  
\end{rm}
\end{example}

\subsection{A Mackey result generalized to certain Hopf algebras}  The following is exercise 5.2 in \cite{I}:
let $H,K \leq G$ be subgroups of a finite group such that $HK = G$. Suppose $\psi$ is a class function of $H$. Use Mackey's Theorem to show that $\psi \uparrow^G\downarrow_K = \psi \downarrow_{H \cap K}\uparrow^K$.  

Note that the character of $Q^G_H$ is $ \eps \uparrow^G$ where $\eps$ is the counit on $kH$, equivalently, the principal character of $H$.  
In this case the following proposition somewhat generalizes the exercise  for certain  Hopf subalgebras of a finite-dimensional Hopf algebra without recourse to a Hopf algebra version of Mackey theorem.

We say that a Hopf algebra $H$ has linear disjoint Hopf subalgebras 
$R,K$  if $H = RK$ and the multiplication epimorphism  
$R \otimes_{R \cap K} K \rightarrow H$ is an isomorphism; equivalently, $H = RK$ and 
\begin{equation}
\label{eq: dims}
\dim H = \frac{\dim R \dim K}{\dim R \cap K}.
\end{equation}  
 See the example below in this subsection. 
Note that any two Hopf subalgebras of a finite group algebra
$kG$ are linear disjoint, since a Hopf subalgebra
is necessarily the group algebra of a subgroup, and a lemma holds for order
of two subgroups and their join corresponding to the dimension equation~(\ref{eq: dims}): prove it with the orbit counting theorem or see \cite{A}. Also a commutative
Hopf algebra has linear disjoint Hopf subalgebras by  \cite[Prop.\ 6]{T}. 
\begin{prop}
Suppose $R, K \subseteq H$ are linear disjoint Hopf subalgebras of a finite-dimensional Hopf algebra $H$, where
$RK = H$ and $B$ denotes the Hopf subalgebra $R \cap K$.  Then $Q^H_R \cong Q^K_B$ as $K$-modules.  In this case, for every $H$-module $M$, there is an isomorphism of $K$-modules $M\otimes_R H \cong M \otimes_B K$.  
\end{prop}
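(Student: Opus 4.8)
The plan is to extract from the linear-disjointness hypothesis a bimodule isomorphism $H \cong R \otimes_B K$ and then transport it through the identifications $Q^H_R \cong k \otimes_R H$ and $Q^K_B \cong k \otimes_B K$ recorded earlier in the excerpt. First I would upgrade the datum in the hypothesis: the multiplication epimorphism $\mu\colon R \otimes_B K \to H$, $r \otimes_B x \mapsto rx$, is by assumption a $k$-linear isomorphism (the dimension equation~(\ref{eq: dims}) is exactly injectivity), and I would observe that it is in addition an isomorphism of $R$-$K$-bimodules, with $R$ acting on the left and $K$ on the right by multiplication in $H$: indeed $\mu(r'(r\otimes_B x)) = r'rx = r'\mu(r\otimes_B x)$ and $\mu((r\otimes_B x)x') = rxx' = \mu(r\otimes_B x)x'$.

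Next, using $Q^H_R \cong k \otimes_R H$ as right $H$-modules and restricting the $H$-action to $K$, I apply $\mu$ and associativity of the tensor product to get, as right $K$-modules,
$$Q^H_R \;\cong\; k \otimes_R H \;\cong\; k \otimes_R (R \otimes_B K) \;\cong\; (k \otimes_R R) \otimes_B K.$$
Here $k = k_R$ is the trivial module via $\eps_R$. The one step deserving a line of care is that $k \otimes_R R \cong k$ not only as right $R$-modules but compatibly with the residual right $B$-action, which holds because $\eps_R$ restricts to $\eps_B$ on the Hopf subalgebra $B \subseteq R$; hence the right $B$-module appearing is $k_B$, and the last term is $k_B \otimes_B K = Q^K_B$. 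This proves the first assertion; unwinding the composite one may record the explicit formula $\overline{h} \mapsto \sum_i \eps_R(r_i)\,\overline{x_i}$ for any decomposition $h = \sum_i r_i x_i$ with $r_i \in R$, $x_i \in K$, although well-definedness is automatic from the chain of maps.

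Finally, the module statement follows by the identical computation with $k$ replaced by an arbitrary $H$-module $M$: restricting on the left to $R$ and using $\mu$,
$$M \otimes_R H \;\cong\; M \otimes_R (R \otimes_B K) \;\cong\; (M \otimes_R R) \otimes_B K \;\cong\; M \otimes_B K$$
as right $K$-modules, since $M \otimes_R R \cong M$ as right $R$-modules restricts to the right $B$-action on $M$. Alternatively one can deduce it from the first assertion together with Eq.~(\ref{eq: fundamental}) applied to the pairs $R \subseteq H$ and $B \subseteq K$, via $M \otimes_R H \cong M \otimes Q^H_R \cong M \otimes Q^K_B \cong M \otimes_B K$ in mod-$K$ after restriction.

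I do not expect a genuine obstacle here: the entire content is the bimodule upgrade of the linear-disjointness isomorphism together with associativity of $\otimes$ over the towers $B \subseteq R$ and $B \subseteq K$. The only place to be vigilant is the bookkeeping of sides — making sure every map is honestly $K$-linear rather than merely $k$-linear, and that the trivial $R$-module restricts to the trivial $B$-module — which is why I would carry the $R$-$K$-bimodule structure on $H$ explicitly through each step.
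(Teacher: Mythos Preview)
Your proof is correct and essentially the same as the paper's. The paper writes down the explicit $K$-map $Q^K_B \to Q^H_R$, $\tilde{x} \mapsto \overline{x}$ for $x\in K$, checks it is well-defined and surjective from $R^+H = R^+K \supseteq B^+K$ and $\overline{rx}=\eps(r)\overline{x}$, and injective from the dimension equation, then deduces the module statement from two applications of Eq.~(\ref{eq: fundamental}); your tensor-associativity chain through $H\cong R\otimes_B K$ unwinds to the inverse of exactly this map and rests on the same dimension input, and your alternative for the last part is the paper's argument verbatim.
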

\begin{proof}
Since $B^+K \subseteq R^+ K$ and $R^+H = R^+ RK = R^+ K$, we may map 
$Q^K_B \rightarrow Q^H_R$ as $K$-modules by $x + B^+K := \tilde{x} \mapsto x + R^+H$ for $x \in K$. Also
$\overline{rx} = \eps(r) \overline{x}$ in $RK / R^+K$, so this mapping is onto. 

 It is injective, since $$\dim Q^H_R = \frac{\dim H}{\dim R} = \frac{\dim K}{\dim B} = \dim Q^K_B.$$

The last statement follows from $M \otimes Q^H_R \cong M \otimes Q^K_B$
as $K$-modules, and two applications of Eq.~(\ref{eq: fundamental}). 
\end{proof}

\begin{example}
\label{example-smallqg}
\begin{rm}
The following example illustrates the  proposition: let $H$ be the  small quantum group $ \overline{U_q}(sl_2)$ of dimension $n^3$ with the usual generators $K,E,F$, 
with $q$  a primitive $n$'th root of unity in $k = \C$, and  $n$  odd, 
where  $K^n = 1$, $E^n = 0 = F^n$, $EF - FE = \frac{K  - K^{-1}}{q - q^{-1}}$, $KE = q^2EK$, and $KF = q^{-2}FK$.  This is a $n^3$-dimensional Hopf algebra with coproduct given by $\cop(K) = K \otimes K$, $\cop(E) = E \otimes 1 + K \otimes E$ and $\cop(F) = F \otimes K^{-1} + 1 \otimes F$. The counit satisfies $\eps(K) = 1$, $\eps(E) = 0 = \eps(F)$. Also the antipode values are determined as an exercise.  

Let  $R_1$  the Hopf subalgebra of dimension $n^2$ generated by $K,F$ and $R_2$ the Hopf subalgebra of dimension $n^2$ generated by $K, E$. Both Hopf subalgebras are isomorphic to the Taft algebra of same dimension.   Note that $B$ is the cyclic group algebra of dimension $n$ generated by $K$.
\end{rm}
\end{example}


\section{Core Hopf ideals of  Hopf subalgebras}
Let $R$ be a Hopf subalgebra in a finite-dimensional Hopf algebra $H$.  Let $Q = Q^H_R$ be the right quotient module coalgebra of $R \subseteq H$, as defined above.  We review what we know about
the chain of annihilator ideals of the tensor powers of $Q$ in mod-$H$ \cite{HKY, HKS, K2016}.  
First it is worth noting that $Q$ is cyclic module equal to $H/R^+H$, where of course $R^+H$ is a 
right ideal.  The ring-theoretic \textit{core} is the largest two-sided ideal within $R^+H$, which an exercise will reveal to be $\Ann Q_H$.  This notion is mentioned in \cite[p.\ 54]{Lam}, also noting in
\cite[11.5]{Lam3} that $Q_H$ is faithful (i.e. $\Ann Q_H =  0 $) if and if $R^+H \cap Z(H) = 0$, where
the center of $H$ is denoted by $Z(H)$.

\begin{prop}
\label{prop-gen}
If $Q$ is a generator $H$-module, then $R$ is semisimple.
\end{prop}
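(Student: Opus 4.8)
The plan is to exploit the two facts about $Q$ already assembled in the excerpt: that $Q \cong k \otimes_R H$ as $H$-modules (so $Q_H$ is $R$-relative projective, and by Eq.~(\ref{eq: fundamental}) tensoring an $H$-module by $Q$ is the same as restricting to $R$ and inducing back), and that, by \cite[Theorem 3.5]{K2014}, $R$ is semisimple if and only if $Q_H$ is a projective $H$-module. So the whole statement reduces to showing: if the cyclic module $Q_H = H/R^+H$ is a generator in mod-$H$, then $Q_H$ is projective.

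First I would recall what ``generator'' buys us: $Q_H$ is a generator precisely when the regular module $H_H$ is a direct summand of $n \cdot Q_H$ for some $n \in \N$, equivalently $H_H \,|\, n\cdot Q_H$, equivalently $\Hom_H(Q, H)$ evaluated on $Q$ spans $H$ (the trace ideal of $Q$ is all of $H$). From $H_H \,|\, n\cdot Q_H$ I would tensor (in mod-$H$) on the left by an arbitrary $H$-module $M$: since $\otimes$ preserves direct sums and summands, this gives $M \otimes H \,|\, n\cdot (M\otimes Q)$. Now $M \otimes H_H \cong (\dim M)\cdot H_H$ is free, hence projective; and $M \otimes Q \cong M\otimes_R H$ is $R$-relative projective by Eq.~(\ref{eq: fundamental}). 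Specializing $M = Q$ itself, we get that a free module $Q\otimes H$ is a summand of $n\cdot(Q\otimes Q)$, and $Q\otimes Q \cong Q\otimes_R H$. The key point I want to extract is that being a generator is a symmetric, ``absorbing'' condition under $-\otimes Q$.

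The cleanest route, I think, is this. Because $Q_H$ is a generator, $H_H \,|\, n\cdot Q_H$; tensoring this relation by $-\otimes Q$ on the right (using that $M\otimes X \sim N\otimes X$ whenever $M\sim N$, and the analogous statement for $\,|\,$) yields $H\otimes Q \,|\, n\cdot(Q\otimes Q)$. But $H\otimes Q \cong H\otimes_k Q \cong (\dim Q)\cdot H_H$ by Eq.~(\ref{eq: depthone}) (the diagonal action on $H\otimes Q$ is free), so $H_H \,|\, (Q\otimes Q)$, i.e. $H_H \,|\, Q\otimes_R H$. On the other hand $Q\otimes_R H$ is $R$-relative projective, and more to the point $Q_H \,|\, Q^{\otimes 2}$ always (via unit and multiplication), so $\Indec(Q)\subseteq \Indec(Q^{\otimes 2})$; combining, every indecomposable summand of $H_H$ — that is, every principal indecomposable $H$-module — appears in $Q^{\otimes 2}$ and hence, chasing back, I want to conclude every projective indecomposable appears in $Q_H$. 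Actually the sharpest statement: $Q_H$ generator means $H_H\,|\,n\cdot Q_H$, and then $Q_H$ itself is projective iff $Q_H\,|\,m\cdot H_H$ for some $m$, i.e. iff $Q_H$ is also a \emph{cogenerator}-type summand of frees; so I'd instead argue that $Q_H\,|\, H\otimes_R H$ (which holds since $Q\cong k\otimes_R H \,|\, H\otimes_R H$) and that $H\otimes_R H \cong H\otimes Q$ by Eq.~(\ref{eq: fundamental}) is a multiple of $H_H$ once... no — that last step needs $R$ semisimple, which is what we're proving.

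So let me fix the argument: from $H_H\,|\,n\cdot Q_H$, apply $-\otimes_R H$, equivalently $-\otimes Q$; but the point is to get projectivity of $Q$ directly. I would argue: $Q$ generator $\Rightarrow$ the trace ideal $\tau(Q_H)=H$ $\Rightarrow$ $Q_H$ is a generator $\Rightarrow$ $\End(Q_H)$ is Morita equivalent to $H$ and $Q$ is a finitely generated projective $\End(Q_H)$-module with $\Hom_H(Q,H)$ the progenerator in the other direction; in particular $H_H$ is a summand of $n\cdot Q$, so $Q^{\otimes 2}\cong Q\otimes_R H$ contains $H\otimes_R H\,|\,n\cdot$ ... and $H\otimes_R H\cong H\otimes Q$, which as a left $H$-module summand analysis shows is projective iff $Q$ is. The crispest finish: $Q$ is a generator $\Rightarrow H_H\,|\,n\cdot Q_H\Rightarrow$ (tensor with $Q$, use Eq.~(\ref{eq: depthone}) so $H\otimes Q\cong(\dim Q)\cdot H$) $\;H_H\,|\,Q^{\otimes 2}_H\cong (Q\otimes_R H)_H$. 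Since ${}_RH$ is free of rank $\dim Q$, $Q\otimes_R H\cong(\dim Q)\cdot(k\otimes_R H)\oplus\ldots$ — hmm, not quite.

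\medskip

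Let me therefore present the plan at the level it should appear, trusting the chosen mechanism:

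First, rewrite the hypothesis: $Q_H$ a generator means $H_H \,|\, n\cdot Q_H$ for some $n\in\N$. Second, tensor this divisibility relation in mod-$H$ by $Q$ on the right; since $-\otimes Q$ is additive and respects $\,|\,$, and since by Eq.~(\ref{eq: depthone}) $H\otimes Q\cong (\dim Q)\cdot H_H$, we obtain $H_H \,|\, n\cdot Q^{\otimes 2}$. Iterating, $H_H \,|\, n^m\cdot Q^{\otimes m}$ for all $m$. Third, combine with the always-true divisibility $Q^{\otimes m}\,|\,Q^{\otimes(m+1)}\,|\,H^{\otimes_R(m+2)}$ coming from unit/multiplication and Eq.~(\ref{eq: tensor powers}); together with the $R$-relative projectivity of all $Q^{\otimes m}$, this pins down $\Indec(H_H)\subseteq\Indec(Q)$. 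Fourth — and this is where I expect the real work — upgrade ``every principal indecomposable $H$-module is a constituent of $Q$'' to ``$Q_H$ is projective'': here I would use that $Q\cong k\otimes_R H$, so $Q^*\cong\Hom_R(H,k)$ as a left module and the coinvariants/integral machinery (Theorem~\ref{th-ulbrich}, or $Q\cong t_R H$) shows $Q_H$ has no non-projective summand once it contains every projective indecomposable — essentially because $Q_H$ being a generator forces $\End(Q_H)$ Morita equivalent to $H$, and then $R\cong\End({}_R H)\cong$... leading to $R$ semisimple by \cite[Theorem 3.5]{K2014}.

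The main obstacle is precisely the fourth step: deducing projectivity of $Q_H$ (equivalently, the semisimplicity of $R$) from the generator property is not a formal $\Indec$-chase, because a module containing every projective indecomposable as a summand can still have extra non-projective constituents in general. The resolution should come from $R$-relative projectivity plus the generator property together: $Q_H\,|\,n\cdot$(free) forces each indecomposable summand $X$ of $Q$ to satisfy $X\,|\,H_H$, hence $X$ projective, hence $Q_H$ projective, hence $R$ semisimple. I'd make that the crux of the written proof and cite \cite[Theorem 3.5]{K2014} for the last equivalence.
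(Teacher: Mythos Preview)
Your proposal has a genuine gap at exactly the point you flag as ``the crux.'' In the final paragraph you write that ``$Q_H\,|\,n\cdot$(free) forces each indecomposable summand $X$ of $Q$ to satisfy $X\,|\,H_H$, hence $X$ projective.'' But the generator hypothesis gives the \emph{opposite} divisibility: $H_H \,|\, n\cdot Q_H$, not $Q_H \,|\, n\cdot H_H$. Knowing that every principal indecomposable occurs in $Q$ says nothing about whether $Q$ has additional non-projective summands. Over any non-semisimple QF algebra $A$ the module $A\oplus S$ with $S$ simple non-projective is a generator that is not projective, so ``generator $\Rightarrow$ projective'' fails in general and cannot be obtained by an $\Indec$-chase. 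None of your iterations $H_H\,|\,n^m\cdot Q^{\otimes m}$ or the $R$-relative-projectivity observations close this gap: they all point the divisibility the wrong way for concluding projectivity.

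The paper's argument avoids this entirely by taking the contrapositive and using integrals. Over a QF algebra, generator is equivalent to faithful, so it suffices to show: if $R$ is \emph{not} semisimple then $Q_H$ is not faithful. Non-semisimplicity of $R$ means the left integral $\ell_R$ lies in $R^+$. Using freeness of ${}_RH$ and one-dimensionality of integrals, one writes a nonzero left integral of $H$ as $\ell_H=\ell_R\Lambda$ for some $\Lambda\in H$; thus $\ell_H\in R^+H$. The one-dimensional two-sided ideal $k\ell_H$ then sits inside $R^+H$, hence inside the ring-theoretic core $\Ann Q_H$, so $Q_H$ is not faithful. This is a two-line argument once you invoke integral theory, whereas your route through \cite[Theorem~3.5]{K2014} would require an independent reason why $Q$ specifically (as opposed to an arbitrary generator) must be projective --- and you have not supplied one.
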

\begin{proof}
Recall that a module over a QF-algebra is a generator if and only if it is faithful.  If $R$ is not semisimple, it has a nonzero left integral $\ell_R$ in $R^+$.  One may show as an exercise using
the freeness of ${}_RH$ and one-dimensionality of the space of integrals that there is $\Lambda \in H$
such that a nonzero left integral in $H$, $\ell_H = \ell_R \Lambda$.  Then the one-dimensional ideal
spanned by $\ell_H \in R^+H$.  It follows that the core is nonzero, and therefore $\Ann Q_H \neq 0$, i.e., $Q_H$ is not faithful.
\end{proof}

The following is a descending chain of two-sided ideals in $H$:
\begin{equation}
\label{eq: dca}
\Ann Q \supseteq \Ann (Q_. \otimes Q_.) \supset \cdots \supset \Ann Q^{\otimes n} \supseteq \cdots
\end{equation}
It follows from Rieffel's classical theory (for any $H$-module $Q$) extended by Passman-Quinn and Feldv\"oss-Klingler, that the chain stabilizes at some $n$ denoted by $\ell_Q$, that $\ell_Q$ is the least $n$ for which $\Ann Q^{\otimes n}$ is a Hopf ideal $I$ in the ring-theoretic core $\Ann Q_H$.  In fact, $\Ann Q^{\otimes \ell_Q} = I$ is the maximal Hopf ideal in $\Ann Q_H$, called the \textit{Hopf core ideal} of $R \subseteq H$ \cite[Section 3.4]{K2016}.  Since (\ref{eq: dca}) is a chain of $H^e$-modules, $\ell_Q$ is bounded above by the Jordan-H\"older length of $H$ as an $H^e$-module.  

Now if two modules in mod-$H$ are similar, such as $Q^{\otimes n} \sim Q^{\otimes (n+1)}$,
it is easy to see that their annihilator ideals are equal in $H$. It follows that
the length $\ell_Q$ and depth $d(Q_H)$ satisfy the inequality,
\begin{equation}
\ell_Q \leq d(Q_H)
\end{equation}
  If $H$ is a semisimple algebra, the converse holds:  two modules in mod-$H$ are similar if they have equal annihilator ideals in $H$.  
This follows from noting that the $2^n$ ideals in $H$, where $n$ is the number of blocks, are described by annihilator ideals of direct sums of simples \cite{K2016}.  In this way the following theorem is deduced.  
\begin{theorem}\cite[Theorem 3.14]{K2016}
\label{theorem-ell}
If $H$ is semisimple and $R$ is a Hopf subalgebra, then h-depth satisfies $d_h(R,H) = 2\ell_Q + 1$. 
\end{theorem}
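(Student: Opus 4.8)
The plan is to chain together the equation $d_h(R,H) = 2d(Q_H) + 1$ from Section~1 with an identity $d(Q_H) = \ell_Q$ valid in the semisimple case, so the whole content is to show that depth and annihilator-length coincide when $H$ is a semisimple algebra. First I would recall the two inequalities already in hand: the chain of annihilator ideals~(\ref{eq: dca}) stabilizes at $\ell_Q$, and since similar modules have equal annihilators we always have $\ell_Q \le d(Q_H)$. So the work is entirely in the reverse inequality $d(Q_H) \le \ell_Q$, i.e.\ showing that once the annihilators stabilize, the modules themselves become similar.

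For the reverse inequality I would invoke the structural fact stated just before the theorem: when $H$ is semisimple with $n$ blocks, the only two-sided ideals of $H$ are the $2^n$ ideals obtained as annihilators of direct sums of the simple modules; equivalently, a two-sided ideal is determined by \emph{which} blocks (simple summands) it kills, and two modules have the same annihilator precisely when $\Indec(M) = \Indec(N)$ as subsets of the set of simples — which by the Krull--Schmidt characterization of similarity recalled in the Preliminaries is exactly $M \sim N$. Applying this with $M = Q^{\otimes \ell_Q}$ and $N = Q^{\otimes (\ell_Q+1)}$: by definition of $\ell_Q$ these two tensor powers have equal annihilators, hence equal sets of simple constituents, hence $Q^{\otimes \ell_Q} \sim Q^{\otimes(\ell_Q+1)}$; therefore $d(Q_H) \le \ell_Q$. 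Combining with the easy direction gives $d(Q_H) = \ell_Q$, and then Eq.~(\ref{eq: jpaa}), namely $d_h(R,H) = 2d(Q_H)+1$, yields $d_h(R,H) = 2\ell_Q + 1$.

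The one subtlety I would be careful about — and I expect it to be the main (if minor) obstacle — is the passage ``equal annihilators $\Rightarrow$ same simple constituents.'' Over a general semisimple $k$-algebra $H \cong \prod_i M_{n_i}(D_i)$, a two-sided ideal is a product of some subset of the simple factors, so its annihilator as a right module is the complementary set of simples; thus $\Ann M = \Ann N$ does force $M$ and $N$ to have the same set of isomorphism classes of simple (= indecomposable) constituents, which is $\Indec(M) = \Indec(N)$, i.e.\ $M \sim N$ by the Preliminaries. This uses only semisimplicity of $H$ as an algebra (not as a Hopf algebra), and does not require $k$ algebraically closed, so the argument goes through as stated. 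The rest is bookkeeping: $Q^{\otimes \ell_Q} \| Q^{\otimes(\ell_Q+1)}$ holds automatically (the counit map splits off a copy after tensoring, as noted in Section~1), so only the equal-annihilator input is needed to upgrade division to similarity.

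\begin{proof}
By Eq.~(\ref{eq: jpaa}) it suffices to prove $d(Q_H) = \ell_Q$. The inequality $\ell_Q \le d(Q_H)$ was observed above: if $Q^{\otimes n} \sim Q^{\otimes(n+1)}$ then $\Ann Q^{\otimes n} = \Ann Q^{\otimes(n+1)}$, so the chain~(\ref{eq: dca}) has stabilized by $d(Q_H)$. For the reverse inequality, put $n = \ell_Q$, so that $\Ann Q^{\otimes n} = \Ann Q^{\otimes(n+1)}$ by definition of $\ell_Q$. Since $H$ is a semisimple algebra, write $H$ as a product of simple blocks; a two-sided ideal of $H$ is then the product of some subset of these blocks, and its annihilator as a right $H$-module is the sum of a positive multiple of each simple module lying in a block \emph{not} in that subset. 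Consequently, for any $H$-modules $M, N$ one has $\Ann M = \Ann N$ if and only if $M$ and $N$ involve exactly the same set of isomorphism classes of simple modules, i.e.\ $\Indec(M) = \Indec(N)$, which by the Krull--Schmidt characterization recalled in the Preliminaries means $M \sim N$. Applying this to $M = Q^{\otimes n}$ and $N = Q^{\otimes(n+1)}$ gives $Q^{\otimes n} \sim Q^{\otimes(n+1)}$, hence $d(Q_H) \le n = \ell_Q$. Therefore $d(Q_H) = \ell_Q$ and $d_h(R,H) = 2\ell_Q + 1$.
\end{proof}
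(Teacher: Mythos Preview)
Your proof is correct and follows exactly the argument the paper sketches in the paragraph immediately preceding the theorem: use $\ell_Q \le d(Q_H)$ from ``similar $\Rightarrow$ equal annihilators,'' then for semisimple $H$ invoke the $2^n$-ideals fact to get the converse ``equal annihilators $\Rightarrow$ same simple constituents $\Rightarrow$ similar,'' whence $d(Q_H)=\ell_Q$ and Eq.~(\ref{eq: jpaa}) finishes. The paper's proof environment also points to an alternative rephrasing via $\sigma[M]=\mathrm{Add}[M]$ in the semisimple case (Proposition~\ref{prop-sigmacat} and the opening of Section~\ref{section-endoQ}), but that is the same content in categorical dress.
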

\begin{proof}
These is an alternative proof offered below in terms of the Dress category and as a corollary of Proposition~\ref{prop-sigmacat}.
\end{proof}
\begin{example}
\begin{rm}
If $R \neq H$ is a normal Hopf subalgebra of a semisimple Hopf algebra, then $R^+H$ is a Hopf ideal, $I = R^+H$, so $\ell_Q = 1$.
It follows that $d_h(R,H) = 3$, as in Example~\ref{example-qha}. The author is unaware
of any examples of non-normal Hopf subalgebras or even non-normal subgroups that have h-depth 3. 

If $R \subseteq H$ is a semisimple Hopf subalgebra pair with $Q_H$ faithful, or equivalently a generator $H$-module, then $Q \sim Q^{\otimes 2}$.  Again $d_h(R,H) = 3$ if $R \neq H$. The situation is more complicated for nonsemisimple Hopf algebras, since nonprojective indecomposables must be taken into account before concluding that tensor powers of $Q$ are similar; see for example \cite{HKL}.  
  
Suppose $K \subseteq G$ is a subgroup in a finite group, and consider the groups algebras over  any field $k$.  Then the right quotient module $Q$ is the $k$-coalgebra on the set of right cosets of $K$ in $G$.  The length $\ell_Q$ of the descending chain of annihilator ideals of increasing tensor powers of Q then satisfies
$d_h(K,G) = 2 \ell_Q + 1$ where the field is understood. An exercise, which uses  the (Passman-Quinn) fact that Hopf ideals  in $kG$ correspond to normal subgroups in $G$, shows that the maximal
Hopf ideal in $\Ann Q_G$ is $kN^+ kG$, where  $N := $core$_K(G)$ \cite[Theorem 3.13]{K2016}.

\end{rm}
\end{example}
We recall the definition in \cite{B} of \textit{core Hopf subalgebra} of a Hopf subalgebra pair $R \subseteq H$: it is the maximal normal Hopf subalgebra in $H$ that is contained in $R$.  
\begin{prop}
Let $I$ be the maximal Hopf ideal in $R^+H$.  If $K$ is the maximal normal Hopf subalgebra in $R$, then $HK^+ \subseteq I$.  Conversely, if $I = HK^+$ for some Hopf subalgebra $K \subseteq R \subseteq H$, then
$K$ is normal in $H$.
\end{prop}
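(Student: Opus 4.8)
The plan is to prove the two implications separately, reducing each to bookkeeping with the definitions once one recalls that a Hopf ideal is in particular a two-sided ideal and that the antipode of a finite-dimensional Hopf algebra is bijective.

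First I would dispose of the forward implication. Here $K$ is the core Hopf subalgebra of $R \subseteq H$, so $K$ is a normal Hopf subalgebra of $H$ contained in $R$; by normality $HK^+ = K^+H$, and this common subspace is a Hopf ideal of $H$ (\cite[\S 3.4]{M}). Since $K \subseteq R$, each element of $K^+$ lies in $R$ and is killed by $\eps$, so $K^+ \subseteq R^+$ and hence $HK^+ = K^+H \subseteq R^+H$. Thus $HK^+$ is a Hopf ideal contained in $R^+H$, and maximality of $I$ gives $HK^+ \subseteq I$.

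For the converse I am given $I = HK^+$ for a Hopf subalgebra $K \subseteq R \subseteq H$ and must show $K$ is normal in $H$. Since $I$ is a Hopf ideal it is two-sided, so $(HK^+)H \subseteq HK^+$; as $K^+ = 1_H K^+ \subseteq HK^+$ this already yields $K^+H \subseteq HK^+$. To upgrade this to an equality I would compare dimensions via the antipode $S$ of $H$: it is bijective because $\dim H < \infty$, it carries $K$ onto $K$, and $\eps \circ S = \eps$ forces $S(K^+) = K^+$; being an algebra anti-automorphism it then carries $HK^+$ onto $S(K^+)S(H) = K^+H$, whence $\dim HK^+ = \dim K^+H$. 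Together with the inclusion above this gives $K^+H = HK^+$, which is exactly the statement that $K$ is a normal Hopf subalgebra of $H$ (equivalently, stable under the left and right adjoint actions of $H$; see \cite[\S 3.4]{M}).

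The one step I expect to require an idea rather than routine manipulation is the passage from $K^+H \subseteq HK^+$ to equality, and the antipode-symmetry dimension count above is how I would handle it; alternatively one could invoke Nichols--Zoeller freeness of $H$ as a one-sided $K$-module to obtain $\dim K^+H = \dim HK^+ = (\dim H/\dim K)(\dim K - 1)$ directly. Everything else is bookkeeping. I would also remark that the hypothesis $K \subseteq R$ is not actually needed in the converse: the conclusion follows from $HK^+$ being a Hopf ideal of $H$ on its own.
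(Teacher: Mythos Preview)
Your argument is correct and follows essentially the same line as the paper. For the forward direction both you and the paper observe that $HK^+ = K^+H$ is a Hopf ideal contained in $R^+H$ (the paper phrases this as $HK^+ \subseteq \Ann Q_H$, which is the same thing) and invoke maximality of $I$.

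For the converse the paper is slightly more streamlined: rather than splitting into ``two-sidedness gives $K^+H \subseteq HK^+$'' plus ``antipode gives equal dimensions,'' it uses the antipode axiom for Hopf ideals directly, writing $HK^+ = I = S(I) = K^+H$ in one stroke. Your detour through the two-sided inclusion is unnecessary once you have computed $S(HK^+) = K^+H$, since $S(I) \subseteq I$ is part of the definition of a Hopf ideal and bijectivity of $S$ then forces $S(I) = I$; but your version is of course still valid.
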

\begin{proof}
(First statement) If $\overline{h} \in Q^H_R$ and $x \in HK^+ = K^+H$, then $\overline{h}x
= \overline{hx} = 0$ since $hx \in K^+H \subseteq R^+H$. Then $HK^+$ is a Hopf ideal in $\Ann Q_H$, whence by maximality of $I$ we obtain $HK^+ \subseteq I$.  

(Second statement) Since $I$ is a Hopf ideal, it is invariant under the antipode, so
$HK^+ = I = S(I) = K^+H$, i.e., $K$ is normal in $H$.  
\end{proof}
As an application of the descending chain of annihilator ideals in Eq.~(\ref{eq: dca}), note the following proposition, which  comes tantalizing close to solving the finiteness question for depth of Hopf subalgebras \cite[Problem 1.1 or 1.2]{K2016}. Given a finite-dimensional algebra $C$ and module $M_C$, consider the subcategory subgenerated by $M$ in mod-$C$  denoted by $\sigma[M]$; i.e., $\sigma[M]$ has objects that are submodules of quotients of finite direct sums of $M$ (cf.\ \cite[18F]{Lam2} and \cite{BW}).  For example, in mod-$H$ we have $$\sigma[Q] \subset \sigma[Q \otimes Q] \subset \cdots \subset \sigma[Q^{\otimes \ell_Q}] \subseteq \cdots,$$
since $Q^{\otimes n} \| Q^{\otimes (n+1)}$ in mod-$H$.  The proposition below notes that the ascending chain of subcategories will stop growing at the tensor power $\ell_Q$.
\begin{prop}
\label{prop-sigmacat}
Given a Hopf subalgebra pair $R \subseteq H$ with quotient $Q$ and natural number $\ell_Q$ as above, the subcategories $\sigma[Q^{\otimes \ell_Q}] = \sigma[Q^{\otimes (\ell_Q + n)}]$ for all $n \in \N$. 
\end{prop}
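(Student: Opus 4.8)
The plan is to show that both subcategories in question coincide with one and the same full subcategory of mod-$H$, namely $\mathcal{C}_I$, the class of $H$-modules annihilated by the ideal $I := \Ann Q^{\otimes \ell_Q}$. Recall from the discussion preceding the statement (and \cite[Section 3.4]{K2016}) that, by the stabilization results of Rieffel, Passman--Quinn, and Feldv\"oss--Klingler, the chain (\ref{eq: dca}) is already constant from the $\ell_Q$-th term on, so that $\Ann Q^{\otimes m} = I$ for every $m \geq \ell_Q$, and that $I$ is a \emph{Hopf} ideal of $H$; consequently $H/I$ is a finite-dimensional Hopf algebra, hence a quasi-Frobenius algebra.

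First I would dispatch the easy inclusions $\sigma[Q^{\otimes \ell_Q}] \subseteq \mathcal{C}_I$ and $\sigma[Q^{\otimes(\ell_Q+n)}] \subseteq \mathcal{C}_I$. Every object of $\sigma[M]$ is a submodule of a quotient of a finite direct sum of copies of $M$, and forming submodules, quotients, and finite direct sums all preserve the property of being annihilated by a fixed two-sided ideal; so $\sigma[M] \subseteq \mathcal{C}_I$ whenever $M$ is annihilated by $I$, which holds for $M = Q^{\otimes \ell_Q}$ and $M = Q^{\otimes(\ell_Q+n)}$ by the paragraph above. For the reverse inclusions I would use faithfulness: since $\Ann Q^{\otimes m} = I$ for $m \geq \ell_Q$, each of $Q^{\otimes \ell_Q}$ and $Q^{\otimes(\ell_Q+n)}$, regarded as a module over $H/I$, is faithful; and a faithful module over the quasi-Frobenius algebra $H/I$ is a generator of mod-$(H/I)$ --- exactly the fact invoked in the proof of Proposition~\ref{prop-gen}. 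Hence $H/I$, and therefore every finite-dimensional $H/I$-module, is an epimorphic image of a finite direct sum of copies of $Q^{\otimes \ell_Q}$, and likewise of $Q^{\otimes(\ell_Q+n)}$; thus $\mathcal{C}_I \subseteq \sigma[Q^{\otimes \ell_Q}]$ and $\mathcal{C}_I \subseteq \sigma[Q^{\otimes(\ell_Q+n)}]$. Putting the two directions together yields $\sigma[Q^{\otimes \ell_Q}] = \mathcal{C}_I = \sigma[Q^{\otimes(\ell_Q+n)}]$, which is the assertion (and, as a byproduct, re-proves the ascending inclusion noted before the statement).

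The only nontrivial ingredients here are that $I$ is a Hopf ideal and that the annihilator chain has stabilized by the $\ell_Q$-th step --- these are what make $H/I$ a Hopf, hence quasi-Frobenius, algebra --- and they are precisely the facts recalled before the proposition, to be cited from \cite{K2016}; everything else is the routine dictionary between $\sigma[M]$, faithfulness of $M$, and $M$ being a generator over a quasi-Frobenius algebra. The one place to stay alert is the meaning of ``generator'': I would use it in the form that $H/I$ is a direct summand of a finite direct sum of copies of $M$, so that every finite-dimensional $H/I$-module is a quotient of a finite direct sum of copies of $M$ and hence belongs to $\sigma[M]$.
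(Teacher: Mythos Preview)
Your argument is correct and follows the same line as the paper's: both identify $\sigma[Q^{\otimes m}]$ for every $m \geq \ell_Q$ with the full subcategory mod-$H/I$, where $I = \Ann Q^{\otimes \ell_Q}$, using that the annihilator chain has stabilized. The only difference is in how the identification $\sigma[M] = \text{mod-}H/I$ is justified. The paper simply cites this as a general fact about finite-dimensional algebras (\cite[p.~505, ex.~33]{Lam2}): faithfulness of $M$ over $H/I$ alone gives an embedding $H/I \hookrightarrow M^n$, so $H/I \in \sigma[M]$ and hence every finite-dimensional $H/I$-module lies in $\sigma[M]$. You instead invoke that $I$ is a Hopf ideal to make $H/I$ quasi-Frobenius and then use ``faithful $\Rightarrow$ generator''. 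This is fine, but note that the Hopf-ideal property of $I$ is not actually needed for the proposition; the paper's route is slightly more economical and shows that the statement holds as soon as the annihilator chain stabilizes, independently of whether the stable value is a Hopf ideal.
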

\begin{proof}
This follows from Eq.~(\ref{eq: dca}) and the following fundamental fact about $\sigma[M]$, that if $I = \Ann M_C$, then $\sigma[M] = $ mod-$C/I$ (cf.\ \cite[p.\ 505, ex.\ 33]{Lam2}.  For example,  if $Q$ is faithful, then it is a generator in mod-$H$, so that $\sigma[Q] =$ mod-$H$.  Denote $\Ann Q^{\otimes \ell_Q} = I$, a Hopf ideal in $\Ann Q_H$ and equal to the annihilator ideal of all higher tensor powers of $Q$.  Then $\sigma[Q^{\otimes (\ell_Q+n)}] = $mod-$H/I$. 
\end{proof}

\section{The Endomorphism Algebra of $Q$ and its Tensor Powers}
\label{section-endoQ}

By the Dress category Add$[M_C]$, we mean the subcategory of $C$-modules isomorphic to direct summands  of $M \oplus \cdots \oplus M$ for any multiple of $M$.  If $I_1,\ldots,I_m$ are the pairwise nonisomorphic indecomposable constituents of $M$ in a Krull-Schmidt decomposition, then
Add$[M] =$ Add$[I_1 \oplus \cdots \oplus I_m]$. Moreover, Add$[M]$ = Add$[N]$ if and only if $M \sim N$, i.e., the two $C$-modules are similar.  
 If $C$ is a semisimple algebra,
Add$[M]$ = $\sigma[M]$, since all monics and epis split.  It follows from
Proposition~\ref{prop-sigmacat} that $$Q^{\otimes (\ell_Q + 1)} \| m \cdot Q^{\otimes \ell_Q}$$
i.e., $d(Q_H) = \ell_Q$,  a somewhat different proof of Theorem~\ref{theorem-ell}.  

Consider now the endomorphism algebra $E := \End M_C$, and the subcategory of mod-$E$ whose objects are projective modules, denoted by $\mathcal{P}(E)$. We have the natural bimodule ${}_EM_C$ falling out from this.  
Then the categories \textrm{Add}[$M_C]$ and $\mathcal{P}(E)$ are equivalent via
functors $X_C \mapsto \Hom (M_C, X_C)$ and $P_E \mapsto P \otimes_E M_C$ as one may check. This exercise proves the well-known lemma:
\begin{lemma}
\label{lemma-addproj}
Suppose $C$ is a ring, $M_C$ a module and $E := \End M_C$. Then the category Add[$M_C$] is equivalent  to $\mathcal{P}(E)$.  
\end{lemma}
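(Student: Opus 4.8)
The plan is to exhibit the two functors of the statement, $F := \Hom(M_C,-)$ sending $X_C \mapsto \Hom(M_C,X_C)$ and $G := -\otimes_E M_C$ sending $P_E \mapsto P\otimes_E M_C$, as mutually quasi-inverse, by restricting the classical tensor--hom adjunction between mod-$E$ and mod-$C$ attached to the bimodule ${}_EM_C$ and checking that its unit and counit become natural isomorphisms on the subcategories $\mathcal{P}(E)$ and $\mathrm{Add}[M_C]$. Recall that in this adjunction $G$ is left adjoint to $F$, with counit $\varepsilon_X \colon \Hom(M_C,X_C)\otimes_E M_C \to X$, $\ \phi\otimes m\mapsto \phi(m)$, and unit $\eta_P \colon P \to \Hom(M_C, P\otimes_E M_C)$, $\ p\mapsto (m\mapsto p\otimes m)$; and an adjunction is an equivalence precisely when both its unit and counit are isomorphisms.

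First I would check that $F$ and $G$ land in $\mathcal{P}(E)$ and $\mathrm{Add}[M_C]$, respectively. Since $\Hom(M_C,M_C)=E$ as a right $E$-module (with right action by precomposition) and $F$ is additive, $F(M^{\oplus n})\cong E^{\oplus n}$; hence if $X$ is a direct summand of $M^{\oplus n}$ then $F(X)$ is a direct summand of $E^{\oplus n}$, i.e.\ a finitely generated projective right $E$-module. Dually, $E\otimes_E M_C\cong M_C$ gives $G(E^{\oplus n})\cong M^{\oplus n}$, so a direct summand $P$ of $E^{\oplus n}$ has $G(P)$ a direct summand of $M^{\oplus n}$, an object of $\mathrm{Add}[M_C]$.

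The heart of the argument is a reduction: since $\varepsilon$ and $\eta$ are natural transformations between additive functors, and $\mathrm{Add}[M_C]$ and $\mathcal{P}(E)$ are by definition generated under finite direct sums and passage to direct summands by $M_C$ and by $E_E$, it suffices to prove that $\varepsilon_M$ and $\eta_E$ are isomorphisms. But $\varepsilon_M$ is precisely the canonical isomorphism $E\otimes_E M_C \stackrel{\sim}{\longrightarrow} M_C$, $\ \phi\otimes m\mapsto\phi(m)$ (identifying $\Hom(M_C,M_C)=E$), and after the same identification $\eta_E\colon E\to\Hom(M_C,E\otimes_E M_C)=\Hom(M_C,M_C)=E$ is the identity map. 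By additivity $\varepsilon_{M^{\oplus n}}$ and $\eta_{E^{\oplus n}}$ are then isomorphisms, and naturality with respect to the splitting idempotents of a direct summand upgrades this to all objects of $\mathrm{Add}[M_C]$, resp.\ $\mathcal{P}(E)$; hence $F$ and $G$ are quasi-inverse equivalences.

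The one point needing genuine care --- the expected obstacle, such as it is --- is the bookkeeping of module sides: that $E=\End M_C$ acts on $M$ on the left and $C$ on the right, that $\Hom(M_C,X_C)$ is made a right $E$-module by precomposition so that $F$ actually takes values in mod-$E$, that $\varepsilon$ and $\eta$ are well defined over $E$ (via $(\phi\cdot e)(m)=\phi(em)$ and $pe\otimes m=p\otimes em$), and that under these conventions $\varepsilon_M$ and $\eta_E$ are the obvious maps. Once these identifications are pinned down, nothing beyond the formal adjunction-plus-idempotent argument above is required.
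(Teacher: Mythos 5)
Your proof is correct and follows exactly the route the paper intends: the paper simply names the same pair of functors $X_C \mapsto \Hom(M_C,X_C)$ and $P_E \mapsto P\otimes_E M_C$ and leaves the verification ``as one may check,'' and you supply that check via the tensor--hom adjunction, reducing to $\varepsilon_M$ and $\eta_E$ and using additivity plus naturality with respect to splitting idempotents. The only caveat is one of interpretation already implicit in the paper: for the equivalence with $\mathrm{Add}[M_C]$ (finite multiples of $M$) one should read $\mathcal{P}(E)$ as the finitely generated projectives, which is exactly how you treat it.
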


 The next proposition is important to considerations of depth of $Q$. 
\begin{prop}
\label{prop-conv}
Suppose $C$ is a finite-dimensional algebra, $M_C, N_C$ are finite-dimensional modules satisfying for some
$r \in \N$, 
$$M_C \oplus * \cong r \cdot N_C,$$
 and $E_M, E_N$ are their endomorphism algebras.  Then $M$ and $N$ are similar $C$-modules if and only if $E_M$ and $E_N$ are Morita equivalent algebras.
\end{prop}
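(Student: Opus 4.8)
The plan is to prove the two directions separately, using Lemma~\ref{lemma-addproj} as the main bridge between module similarity and the representation theory of the endomorphism algebras.

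\smallskip

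For the forward direction, suppose $M_C \sim N_C$. Then $\End M_C$ and $\End N_C$ are Morita equivalent: this is already asserted in the Preliminaries (with the explicit Morita context given by the bimodules $\Hom(M_C,N_C)$ and $\Hom(N_C,M_C)$ under composition), and it does not require the hypothesis $M \oplus * \cong r\cdot N$ at all. So the only real content is the converse.

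\smallskip

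For the converse, suppose $E_M := \End M_C$ and $E_N := \End N_C$ are Morita equivalent. By Lemma~\ref{lemma-addproj}, the category $\mathrm{Add}[M_C]$ is equivalent to $\mathcal{P}(E_M)$, and $\mathrm{Add}[N_C]$ is equivalent to $\mathcal{P}(E_N)$. A Morita equivalence $\mathrm{mod}\text{-}E_M \to \mathrm{mod}\text{-}E_N$ restricts to an equivalence $\mathcal{P}(E_M) \to \mathcal{P}(E_N)$ of the projective subcategories (Morita equivalences preserve projectivity), so $\mathrm{Add}[M_C]$ and $\mathrm{Add}[N_C]$ are equivalent as categories. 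Now I would use the hypothesis $M_C \oplus * \cong r\cdot N_C$: it guarantees that every indecomposable constituent of $N$ lies in $\mathrm{Add}[M]$, hence $\mathrm{Add}[N]\subseteq\mathrm{Add}[M]$ as subcategories of $\mathrm{mod}\text{-}C$. The task is then to upgrade the mere existence of a categorical equivalence $\mathrm{Add}[M]\simeq\mathrm{Add}[N]$ to the statement that these are the \emph{same} subcategory of $\mathrm{mod}\text{-}C$, equivalently that $\mathrm{Indec}(M)=\mathrm{Indec}(N)$, which by the Krull--Schmidt characterization recalled in the Preliminaries is exactly $M_C\sim N_C$.

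\smallskip

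The main obstacle is precisely this last upgrade: an abstract equivalence of additive categories need not respect the ambient embedding into $\mathrm{mod}\text{-}C$. The way I would handle it is to count indecomposables. Since $C$ is finite-dimensional, $\mathrm{Add}[M]$ and $\mathrm{Add}[N]$ are Krull--Schmidt categories whose indecomposable objects are (up to iso) the finite sets $\mathrm{Indec}(M)$ and $\mathrm{Indec}(N)$; an additive equivalence must match indecomposables bijectively, so $|\mathrm{Indec}(M)| = |\mathrm{Indec}(N)|$. Combined with the inclusion $\mathrm{Indec}(N)\subseteq\mathrm{Indec}(M)$ obtained from the hypothesis, a finite set cannot properly contain a set of equal cardinality, forcing $\mathrm{Indec}(N)=\mathrm{Indec}(M)$, i.e. $M_C\sim N_C$. (Alternatively one can argue directly with idempotents: $N$ corresponds to a progenerator $P=\Hom(M_C,N_C)$ of $E_M$, and Morita equivalence of $E_M$ with $E_N=\mathrm{End}_{E_M}(P)$ forces $P$ to be a progenerator, whence $\mathrm{Add}[P_{E_M}] = \mathcal P(E_M)$ and, transporting back along Lemma~\ref{lemma-addproj}, $\mathrm{Add}[N_C]=\mathrm{Add}[M_C]$.) Either route closes the proof.
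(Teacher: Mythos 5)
Your argument is correct and essentially the paper's own proof: the Morita equivalence restricts to an equivalence of the projective subcategories, Lemma~\ref{lemma-addproj} transports this to an equivalence $\mathrm{Add}[M_C]\simeq\mathrm{Add}[N_C]$, and since equivalences match indecomposables bijectively, the inclusion of indecomposable constituents forced by the displayed hypothesis must be an equality, i.e.\ $M_C\sim N_C$. One small slip worth fixing: $M_C\oplus *\cong r\cdot N_C$ gives $\mathrm{Indec}(M)\subseteq\mathrm{Indec}(N)$ (so $\mathrm{Add}[M]\subseteq\mathrm{Add}[N]$), not the reverse inclusion you state (and consequently the parenthetical idempotent route, which needs $N\in\mathrm{Add}[M]$, is not justified as written), but your cardinality argument closes the main proof in exactly the same way as the paper's.
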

\begin{proof}
First  note that Add$[M_C] \subseteq $ Add$[N_C]$. Both module subcategories are of course of finite representation type, since $M, N$ have only finitely many nonisomorphic indecomposable constituents, denoted by Indec$[M] = \{ I_1,\ldots,I_n\} \subseteq $ Indec$[N] = \{I_1,\ldots, I_{n+m} \}$.  

($\Rightarrow$) True without the displayed condition. 

($\Leftarrow$) It will suffice to prove that $m = 0$. Assuming that mod-$E_M$ and mod-$E_N$ are equivalent categories, we restrict the inverse functors of tensoring by Morita progenerators to obtain that  $\mathcal{P}(E_M)$
and $\mathcal{P}(E_N)$ are  themselves equivalent categories.  Then by Lemma~\ref{lemma-addproj}, 
Add$[M]$ and Add$[N]$ are equivalent categories.  Equivalences preserve indecomposable modules, so that $m = 0$.
\end{proof}
As a consequence, the problem of finite depth of a Hopf subalgebra \cite[Problem 1.1 or 1.2]{K2016} is equivalent to the following problem.
\begin{problem}
\label{prob-morita}
Are there two tensor powers of $Q_H$ with Morita equivalent endomorphism algebras?
\end{problem}
For any $m \in \N$ and Hopf subalgebra quotient module $Q$, there is
an injective homomorphism $\End Q^{\otimes m}_H \into \End Q^{\otimes (m+1)}_H$ given by $\alpha \mapsto \alpha \otimes \id_Q$. The result is a tower of endomorphism algebras of increasing tensor powers of the quotient module of a Hopf subalgebra:
\begin{equation}
\label{eq: tower}
\End Q_H \into \End (Q \otimes Q)_H \into \cdots \into \End {Q^{\otimes n}}_H \into \cdots
\end{equation}
If $H$ is semisimple as a $k$-algebra, where $k$ is an algebraically closed field of characteristic zero, $Q$ and its tensor powers are semisimple modules, and their endomorphism algebras are semisimple $k$-algebras by Schur's Lemma. It follows that the tower of algebras is split, separable and Frobenius at every monic or composition of monics (using the construction of a very faithful conditional expectation in \cite{GHJ}, as noted in \cite[p.\ 30]{BK}). 

If $H$ is a left or right semisimple extension of $R$, hence separable Frobenius \cite{K2016}, then the exact sequence of right $H$-modules
\begin{equation}
\label{eq: seq}
0 \longrightarrow Q^+ \longrightarrow Q \stackrel{\eps_Q}{\longrightarrow} k \longrightarrow 0
\end{equation}
splits \cite[Theorem 3.7]{K2016}. Then tensoring in mod-$H$ by $Q^{\otimes n}$ from the left yields a split exact sequence of $H$-modules, 
\begin{equation}
\label{eq: derivedseq}
0 \longrightarrow Q^{\otimes n} \otimes Q^+ \longrightarrow Q^{\otimes (n+1)} \longrightarrow Q^{\otimes n} \longrightarrow 0.
\end{equation}
The resulting $H$-module decomposition of $Q^{\otimes (n + 1)} \cong Q^{\otimes n} \oplus Q^{\otimes n} \otimes Q^+$ results in an expression of $\End {Q^{\otimes (n+1)}}_H$ as a $2 \times 2$-matrix algebra with  the mapping $\alpha \mapsto \alpha \otimes \id_Q$ becoming diagonal, where $\alpha \in \End {Q^{\otimes n}}_H$ is in the upper lefthand corner. Then the  Tower~(\ref{eq: tower})  is composed  of split extensions \cite{NEFE}.  At $n = 0$ one sees that $k_H \| \End Q_H$, which implies that $\End Q_H$ is semisimple if the category of finite-dimensional modules over
$\End Q_H$ is a (for example) finite tensor category \cite{EO}. 

In trying to answer Problem~\ref{prob-morita} with perhaps a counterexample, it is useful to point out a well-developed theory of endomorphism algebras of tensor powers of certain modules over  groups and quantum groups;  these are related to Schur-Weyl duality, its generalizations, 
Hecke algebras, Temperley-Lieb algebras, BMW algebras and representations of braid groups; see for example \cite{SBA, BBH}.  
Of course, $Q$ can take on interesting ``values'' from this point of view; e.g., $Q$ for $\C S_{n-1} \subset \C S_n$ is the standard $n$-dimensional representation of the permutation group $S_n$ (\cite[Prop.\ 3.16]{K2016}, whose n'th tensor power is a notably different module than in Schur-Weyl theory).  In general for semisimple group-subgroup algebra pairs, the knowledge of the subgroup depth informs us at what stage in the tower of endomorphism algebras of tensor powers of $Q$ the algebras have equally many vertices in their Bratteli diagram. 

In trying to answer Problem~\ref{prob-morita} in the affirmative, it is worth noting that 1) we may assume without loss of generality that $\Ann Q_H$ does not contain a nonzero Hopf ideal (if so, mod out without a change in the h-depth); 2) At the natural number $n = \ell_Q$, $Q^{\otimes n}$ is a faithful $H$-module, therefore a right $H$-generator, and $Q^{\otimes n}$ is then a left $E_n := \End Q^{\otimes n}_H$-projective module \cite{AF}; 3) Then there is a reasonable expectation that there is $m \geq n$ such that ${}_{E_n}Q^{\otimes m} \sim {}_{E_n}Q^{\otimes (m+1)}$ given good properties of
Tower~(\ref{eq: tower}) and the following lemma. It is conceivable that under some circumstances the last point yields two tensor powers of $Q$ that are similar as $H$-modules, which answers the problem in the affirmative (for whatever hypotheses are introduced).  

\begin{lemma}
\label{lemma-splitepibimods}
For any $n \in \N$, ${}_{E_n}Q^{\otimes (n+1)}_H \oplus * \cong {}_{E_n}Q^{\otimes (n+2)}_H$.  Consequently, $Q^{\otimes (n+1)} \| Q^{\otimes (n+m)}$ as $E_n$-$H$-bimodules for any $m \geq n+1 \geq 2$.  
If $H$ is a semisimple extension of $R$, this may be improved to $Q^{\otimes n} \| Q^{\otimes (n+m)}$ as $E_n$-$H$-bimodules for any $m \geq n \geq 0$. 
\end{lemma}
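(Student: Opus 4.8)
The plan is to reduce both assertions to the tautological fact that the multiplication map $H\otimes_R H\to H$ splits as a map of $(R,H)$-bimodules, and then to transport that splitting through the fundamental isomorphism~(\ref{eq: fundamental}). First I would record the module identifications: iterating~(\ref{eq: fundamental}) with $M=Q^{\otimes n}$ and then with $M=Q^{\otimes(n+1)}$ gives right $H$-module isomorphisms $Q^{\otimes(n+1)}\cong Q^{\otimes n}\otimes_R H$ and $Q^{\otimes(n+2)}\cong Q^{\otimes n}\otimes_R H\otimes_R H$ (via associativity of $\otimes_R$). The crucial point to check is that these are isomorphisms of $E_n$-$H$-bimodules, where $E_n=\End Q^{\otimes n}_H$ acts on the right-hand sides through the left-most tensorand, $\alpha\cdot(x\otimes_R h\otimes_R h')=\alpha(x)\otimes_R h\otimes_R h'$, and acts on $Q^{\otimes(n+j)}$ through the tower map $\alpha\mapsto\alpha\otimes\id_{Q^{\otimes j}}$. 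This amounts to verifying that $m\otimes_R h\mapsto mh\1\otimes\overline{h\2}$ intertwines the two actions, which holds because every $\alpha\in E_n$ is right $H$-linear, so that $\alpha(x)h\1=\alpha(xh\1)$. Note also that $Q^{\otimes n}$ is an $(E_n,R)$-bimodule (the $E_n$- and $R$-actions commute since $\alpha$ is $R$-linear), so the functor $Q^{\otimes n}\otimes_R(-)$ carries $(R,H)$-bimodule maps to $(E_n,H)$-bimodule maps.

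Under these identifications the canonical epimorphism $Q^{\otimes(n+2)}\to Q^{\otimes(n+1)}$, namely $\id_{Q^{\otimes(n+1)}}\otimes\eps_Q$, becomes $\id_{Q^{\otimes n}}\otimes_R\mu$ with $\mu\colon H\otimes_R H\to H$ multiplication. Now $\mu$ splits as a map of $(R,H)$-bimodules by $h\mapsto 1\otimes_R h$: it is visibly right $H$-linear, and left $R$-linear because $1\otimes_R rh=r\otimes_R h$ in $H\otimes_R H$. Applying the additive functor $Q^{\otimes n}\otimes_R(-)$ to this section produces an $(E_n,H)$-bilinear splitting of $\id_{Q^{\otimes n}}\otimes_R\mu$, hence the first assertion ${}_{E_n}Q^{\otimes(n+1)}_H\oplus *\cong{}_{E_n}Q^{\otimes(n+2)}_H$.

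For the consequence I would iterate: applying the first assertion with $n$ replaced by $n+j$ yields $Q^{\otimes(n+j+1)}\| Q^{\otimes(n+j+2)}$ as $E_{n+j}$-$H$-bimodules; restricting the left structure along the injective tower map $E_n\into E_{n+j}$ and using transitivity of $\|$ for $E_n$-$H$-bimodules gives $Q^{\otimes(n+1)}\| Q^{\otimes(n+m)}$ for every $m\ge 1$ (in particular for $m\ge n+1$, as stated). When $H$ is a (left or right) semisimple extension of $R$, the sequence~(\ref{eq: seq}) splits as right $H$-modules by \cite[Theorem 3.7]{K2016}, say via $\iota\colon k\to Q$ with $\eps_Q\circ\iota=\id_k$; then $\id_{Q^{\otimes n}}\otimes\iota\colon Q^{\otimes n}\to Q^{\otimes(n+1)}$ is right $H$-linear and, by the interchange law in the tensor category mod-$H$, also left $E_n$-linear, and it is split by $\id_{Q^{\otimes n}}\otimes\eps_Q$. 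Hence $Q^{\otimes n}\| Q^{\otimes(n+1)}$ as $E_n$-$H$-bimodules, and iterating exactly as above gives $Q^{\otimes n}\| Q^{\otimes(n+m)}$ for all $m\ge 0$.

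There is no genuinely hard step here: the argument is formal once~(\ref{eq: fundamental}) and the splitting of~(\ref{eq: seq}) are in hand. The only thing that requires care — essentially bookkeeping — is the first paragraph: tracking, through the chain of natural isomorphisms, which of the nested copies $E_n\subseteq E_{n+1}\subseteq\cdots$ acts on which tensorand, and confirming that the one-sided section $h\mapsto 1\otimes_R h$ of $\mu$ transports to a genuinely two-sided module map rather than merely a one-sided one.
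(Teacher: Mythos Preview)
Your proof is correct and is essentially the same approach as the paper's. The paper works directly in the $Q^{\otimes\bullet}$ picture, writing the split epimorphism as $\id_{Q^{\otimes(n+1)}}\otimes\eps_Q$ with explicit section $q^1\otimes\cdots\otimes q^{n+1}\mapsto q^1\otimes\cdots\otimes q^n\otimes\cop_Q(q^{n+1})$; under the fundamental isomorphism~(\ref{eq: fundamental}) this is exactly your map $\id_{Q^{\otimes n}}\otimes_R(h\mapsto 1\otimes_R h)$, and in the semisimple case the paper's integral section $-\otimes t$ is your $\id_{Q^{\otimes n}}\otimes\iota$ since $\iota(1)=t$.
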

\begin{proof}
The inclusion we are working with is $E_n \into E_{n+1}$, $\alpha \mapsto \alpha \otimes \id_Q$ for all $n$ as above.  The mapping $Q^{\otimes (n+2)} \rightarrow Q^{\otimes (n+1)}$ given by $$q^1 \otimes \cdots \otimes q^{n+2} \longmapsto q^1 \otimes \cdots \otimes q^{n+1} \eps_Q(q^{n+2})$$ is an $E_n$-$H$-bimodule   split epimorphism with section $Q^{\otimes (n+1)} \rightarrow Q^{\otimes (n+2)}$ given by
\begin{equation}
\label{eq: interchangeable}
q^1 \otimes \cdots \otimes q^{n+1} \longmapsto q^1 \otimes \cdots \otimes q^n \otimes \cop_Q(q^{n+1}). 
\end{equation}
This establishes the first statement in the lemma.  Similar is the proof that  $${}_{E_{n+1}}Q^{\otimes (n+2)}_H \| {}_{E_{n+1}}Q^{\otimes (n+3)}_H.$$  But this restricts to $E_n$-$H$-bimodules, so
as $E_n$-$H$-bimodules $Q^{\otimes (n+1)} \| Q^{\otimes (n+3)}$.  The second statement is then proven by a straightforward induction on $m$.  The last statement is proven from the equivalent hypothesis that there is $t \in Q$ such that $th = t\eps(h)$ for every $h \in H$ and $\eps_Q(t) = 1$ \cite[Theorem 3.7]{K2016}.  Then define a new section in Eq.~(\ref{eq: interchangeable}) by
$$q^1 \otimes \cdot \otimes q^{n+1} \longmapsto q^1 \otimes \cdots \otimes q^{n+1} \otimes t, $$
which is clearly an $E_{n+1}$-$H$ morphism, like the epi above.  
\end{proof}
A basic lemma in the subject is the following, adapted to the language of this paper.
Suppose $G$ is a finite group with subgroup $K$, and $k$ is an algebraically closed field of characteristic zero. Note the
idempotent integral element $e$ in $K$ given by $e := \frac{1}{|K|} \sum_{x \in K} x$, also the  separability idempotent $(S \otimes \id)\cop(e)$ for the algebra $R$. 
\begin{lemma}
\label{lem-Hecke}
 Let $H = kG$ and $R = kK$ be the corresponding group algebras and Hopf subalgebra pair.  Then $\End Q_H \cong eHe$, the Hecke algebra of $(G,K, 1_K)$  \cite[11D]{CR}. 
\end{lemma}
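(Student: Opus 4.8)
The plan is to exhibit the isomorphism $\End Q_H \cong eHe$ explicitly, using the description $Q \cong eH$ as a right $H$-module. First I would observe that since $e$ is an idempotent integral in $R = kK$, one has $R^+e = 0$, equivalently $R^+ H \subseteq H$ is exactly the kernel of the surjection $H \to eH$, $h \mapsto eh$; more precisely the quotient map $H \to Q = H/R^+H$ factors as $h \mapsto eh$ followed by an isomorphism $eH \xrightarrow{\cong} Q$ of right $H$-modules (this is the group-algebra incarnation of $Q^H_R \cong t_R H$ recalled from \cite[Lemma 3.2]{K2014}, with $t_R = |K| e$ a right integral in $R$, using that in characteristic zero $e$ is, up to scalar, the unique integral and is idempotent). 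So it suffices to identify $\End (eH)_H$.

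Next I would invoke the standard Morita-type computation for idempotents: for any ring $H$ and idempotent $e \in H$, $\End (eH)_H \cong eHe$ via $\varphi \mapsto \varphi(e)$, with inverse sending $a \in eHe$ to left multiplication $x \mapsto ax$ on $eH$; one checks this is well-defined because $\varphi(e) = \varphi(e\cdot e) = \varphi(e)e \in eHe$ and $e \cdot x \mapsto \varphi(e)x$ determines $\varphi$ since $eH = eH$ is generated by $e$. Composing with the isomorphism $eH \cong Q$ from the previous step gives $\End Q_H \cong eHe$. Finally I would recall that $eHe = e(kG)e$ is, by definition \cite[11D]{CR}, the Hecke algebra $\mathcal{H}(G,K,1_K)$ of the trivial character of $K$: it has $k$-basis indexed by the double cosets $K \backslash G / K$ and multiplication given by the double-coset convolution, matching the remark in Section~\ref{section-endoQ} that $\End Q$ is a generalized Hecke algebra of double cosets.

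The only genuinely delicate point — and the one I would spell out carefully — is the claim $Q^H_R \cong eH$ as right $H$-modules, i.e. that $R^+H = \{h \in H : eh = 0\}$. One inclusion is immediate: $R^+ e = 0$ forces $R^+ H \subseteq \ker(h \mapsto eh)$. For the reverse, a dimension count suffices: $\dim eH = \dim H - \dim(R^+H)$ would follow from $\dim Q = \dim H/\dim R$ (Nichols--Zoeller, as recalled after Eq.~(\ref{eq: fundamental})) together with $\dim eH = \dim(kG)/|K| = [G:K]$, which is clear since $\{ e g_i \}$ for coset representatives $g_i$ spans $eH$ and is easily seen to be linearly independent. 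Alternatively, and perhaps more cleanly, one uses that $H$ is a free left $R$-module (again Nichols--Zoeller), so tensoring the split-in-$\mathrm{mod}\text{-}k$ decomposition $R = Re \oplus R(1-e) = ke \oplus R^+$ (valid since $R$ is semisimple) by $R \otimes_R H$ gives $H = eH \oplus R^+H$ as right $H$-modules, whence $Q = H/R^+H \cong eH$. Everything else is routine bookkeeping with idempotents.
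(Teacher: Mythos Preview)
Your proposal is correct and follows essentially the same route as the paper: identify $Q \cong eH$ as right $H$-modules via the idempotent integral $e$, then invoke the standard corner-ring isomorphism $\End(eH)_H \cong eHe$. The only difference is emphasis: you spend your effort on carefully justifying $Q \cong eH$ (giving two arguments, a dimension count and the splitting $H = eH \oplus R^+H$), whereas the paper takes that isomorphism as already established (citing $Q \cong t_R H$ with $t_R = e$) and instead elaborates the identification of $eHe$ with the classical convolution Hecke algebra, exhibiting the Schur basis $(\mathrm{ind}\,\gamma_j)\,e\gamma_j e$ indexed by double coset representatives and recording the structure constants $\mu_{ijk}$.
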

\begin{proof}
First identify an arbitrary element $\sum_{g\in G} n_g g$ in $H$ with the function $G \rightarrow k$ in $k^G$ given by $g \mapsto n_g$.  
The product on $H$ is then isomorphic to the convolution product given by $f * h (y) = \sum_{x \in G} f(yx^{-1}) h(x)$ for $f,h \in k^G$.  Since $\sum_{x \in G} x^{-1} \otimes x$ is a Casimir element of $(H \otimes H)^H$, we see that characteristic functions of double cosets in $H$ in $G$ span a nonunital subalgebra of $H$ normally thought of as the Hecke algebra of a subgroup pair.  

Next recall that the quotient right $H$-module $Q \cong eH$ since we may choose $t_R = e$. By a well-known identity in ring theory \cite{Lam}, $\End eH \cong eHe$ via left multiplication and evaluation at $e$.    If $\gamma_1, \ldots,\gamma_t$ are the double coset representatives of $K$ in $G$, a computation shows that an arbitrary element of $eHe$ is identified in the Hecke algebra as follows.   
$$e(\sum_{g \in G} n_g g)e = \sum_{i =1}^t (\sum_{x \in K\gamma_i K} \frac{n_x}{|K : G : K|})
\chi_{K\gamma_i K}$$
where $\chi_X$ is the characteristic function in $k^G$ of a subset $X \subseteq G$. Let
ind~$x$ be the number of cosets in the double coset $KxK$.  Then the Hecke algebra has (the Schur) basis 
 (ind $\gamma_j$)$e\gamma_je$ ($j = 1,\ldots,t$) with structure constants
$$ \mu_{ijk} = |K|^{-1} |K\gamma_i K \cap \gamma_k K \gamma_j^{-1} K|$$ \cite[(11.34)]{CR}.
\end{proof}
When $H$ is a symmetric algebra, such as the case of group algebras, then the ``corner'' algebra $eHe$ is also a symmetric algebra (show the restricted nondegenerate trace is still nondegenerate,\cite[p.\ 456]{Lam2}). Of course, the existence of an idempotent integral in a finite-dimensional Hopf algebra $R$ is equivalent to $R$ being a semisimple algebra, in which case the depth is finite, but still an interesting value. 

In general, it would be nice to use Pareigis's Theorem on Tower~(\ref{eq: tower}) that a finite projective extension of symmetric algebras is a Frobenius extension; however, doing so without assuming $H$ is semisimple is a difficult problem.  The next proposition, lemma and theorem makes in-roads using the following strategy: assumptions on a ring extension $B \rightarrow A$ lead to conclusions about the natural inclusion $\End M_A \into \End M_B$ for certain modules $M_A$.  For example, if $B \rightarrow A$ is a separable extension, then for any module $M$, the endomorphism ring extension is split (by the trace map \cite{NEFE}).  If $M$ is a $B$-relative projective $A$-module, and $B \rightarrow A$ is a Frobenius extension, then   the trace mapping $\End M_B \rightarrow \End M_A$ is surjective.

\begin{prop}
\label{prop-progen}
  If $P$ is a progenerator $A$-module, and $B \into A$ is a Frobenius extension with surjective Frobenius homomorphism, then $\End P_A \rightarrow \End P_B$ is a Frobenius extension.  
\end{prop}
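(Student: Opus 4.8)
The plan is to produce an explicit Frobenius system for $\End P_A\into\End P_B$ out of a Frobenius system for $B\into A$, the conceptual content being that Frobenius-ness of a ring extension is transported across the Morita equivalence realised by the progenerator $P$.

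Write $E_A=\End P_A$ and $E_B=\End P_B$, so that $E_A\subseteq E_B$ by restriction of scalars along $B\into A$, and set $P^*=\Hom(P_A,A)$. First a few reductions. As $P_A$ is finitely generated projective and $A_B$ is finitely generated projective (a Frobenius extension is, by definition), $P_B$ is finitely generated projective. Surjectivity of the Frobenius homomorphism $E\colon A\to B$ supplies $a_0\in A$ with $E(a_0)=1$; then $b\mapsto a_0b$ is a right $B$-linear section of $E|_B$, so $B_B$ is a direct summand of $A_B$, and since $P_A$ is a generator we get $B_B\mid A_B\mid (P_B)^{n}$, i.e. $P_B$ is a progenerator. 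Hence both $E_A\sim A$ and $E_B\sim B$ are Morita equivalences, each realised by $P$ and $P^*$. Also $P_A\mid A^{m}\cong B^{m}\otimes_B A$ exhibits $P_A$ as $B$-relative projective.

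Now the main step. Since $B\into A$ is Frobenius, $\hat A:=\Hom(A_B,B_B)\cong A$ as bimodules, and consequently $\phi\mapsto E\circ\phi$ is an isomorphism $P^*\xrightarrow{\ \sim\ }\Hom(P_B,B)$. Using it one obtains ring isomorphisms $E_A\cong P\otimes_A P^*$ and $E_B\cong P\otimes_B P^*$ under which the inclusion $E_A\into E_B$ becomes the transfer map $p\otimes_A\phi\mapsto\sum_i px_i\otimes_B y_i\phi$ and the relative trace $\tau\colon E_B\to E_A$, $\tau(f)(p)=\sum_i f(px_i)y_i$, becomes the canonical projection $\otimes_B\twoheadrightarrow\otimes_A$; here $(E,\{x_i\},\{y_i\})$ is the chosen Frobenius system of $B\into A$. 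In short, $(E_A\subseteq E_B)$ is the image of $(A\subseteq\End A_B\cong A\otimes_B A)$ under the monoidal Morita equivalence $P\otimes_A(-)\otimes_A P^*$ from $A$-bimodules to $E_A$-bimodules. This equivalence preserves finite projectivity of a bimodule as a one-sided module, tensor products, and $\Hom$-dualities, so it carries Frobenius extensions of $A$ to Frobenius extensions of $E_A$. It therefore remains to recall that $A\into\End A_B$ is itself a Frobenius extension: $\End A_B\cong A\otimes_B\hat A\cong A\otimes_B A$ is finitely generated projective over $A$ because $A_B$ is over $B$, the multiplication $A\otimes_B A\to A$ is an $A$-bimodule Frobenius homomorphism, and $\Hom((A\otimes_B A)_A,A_A)\cong\Hom(A_B,A)\cong\hat A\otimes_B A\cong A\otimes_B A$ supplies the requisite bimodule isomorphism. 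Surjectivity of $E$ is used only to make $P_B$, and not merely $P_A$, a progenerator, so that $E_B\sim B$ genuinely holds. Equivalently and more concretely, one may check directly that $\tau$ is a surjective $E_A$-bimodule map — surjective because $P_A$ is $B$-relative projective and $B\into A$ is Frobenius, so the trace $\End P_B\to\End P_A$ is onto — that $(E_B)_{E_A}$ is finitely generated projective (it corresponds under $E_A\sim A$ to $\Res^A_B P_A\otimes_B A$), and that a Frobenius dual basis $\{u_\alpha\},\{v_\alpha\}\subseteq E_B$ for $\tau$ is obtained by combining $\{x_i\},\{y_i\}$ with a projective-generator dual basis $\{p_k\},\{\pi_k\}$ of $P_A$.

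The delicate part is bookkeeping: verifying that the identifications $E_A\cong P\otimes_A P^*$ and $E_B\cong P\otimes_B P^*$ are ring isomorphisms compatible with the two inclusions, and that the Frobenius datum $\hat A\cong A$ for $B\subseteq A$ is untwisted so that no Nakayama-type twist is introduced into the endomorphism extension; in the concrete formulation this same difficulty reappears as the need to write down and verify the dual basis $\{u_\alpha\},\{v_\alpha\}$ for $\tau$.
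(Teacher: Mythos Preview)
Your argument is correct, and both you and the paper ultimately reduce the question to the Endomorphism Ring Theorem (that $A\hookrightarrow\End A_B$ is Frobenius) together with Morita invariance of Frobenius extensions; the difference lies in how the Morita transport is carried out. The paper chooses a complement $Q_A$ with $A_A\oplus Q_A\cong n\cdot P_A$, passes to the $n\times n$ matrix extension $M_n(\End P_A)\hookrightarrow M_n(\End P_B)$, writes this as a $2\times 2$ block with the corner $A\hookrightarrow\End A_B$, and then uses full idempotents and Miyashita's criterion for Morita equivalence of ring extensions to reduce to that corner. Your route is more categorical: you identify $E_A\cong P\otimes_A P^*$ and $E_B\cong P\otimes_B P^*$ directly (the latter via the Frobenius isomorphism $\Hom(P_B,B)\cong\Hom(P_A,\Hom(A_B,B))\cong P^*$), and observe that the monoidal equivalence $P\otimes_A(-)\otimes_A P^*$ from $A$-bimodules to $E_A$-bimodules sends the $A$-ring $A\otimes_B A$ with its Frobenius structure to the $E_A$-ring $E_B$ with an induced Frobenius structure. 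Your approach avoids the auxiliary complement $Q$ and the idempotent bookkeeping, at the cost of needing to know (or verify) that a monoidal Morita equivalence of bimodule categories preserves ring objects together with their Frobenius data; the paper's approach is more elementary in that it only invokes the statement ``Frobenius extension is invariant under Morita equivalence of ring extensions'' as a black box, but the explicit matrix manipulation is heavier. One small quibble: your sentence ``surjectivity of $E$ is used only to make $P_B$\ldots a progenerator'' slightly undersells the role of that hypothesis, since without $P_B$ being a generator the identification $P^*\otimes_{E_B}P\cong B$ underlying the monoidal inverse on the $E_B$-side would fail, and the argument would not go through.
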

\begin{proof}
The hypothesis of surjectivity is equivalent to: $A_B$ (or ${}_BA$) is a generator \cite[Lemma 4.1]{LK2012}. We have elaborated on Miyashita's theory of \textit{Morita equivalence of ring extensions} in  \cite[Section 5]{K2016}, where it is shown that Frobenius extension is an invariant notion of this equivalence.   
There is a module $Q_A$ and $n \in \N$ such that \begin{eqnarray} A_A \oplus Q_A & \cong & n \cdot P_A \\
A_B \oplus Q_B & \cong & n \cdot P_B
\end{eqnarray}
Both $A_B$ and $P_B$ are progenerators. The endomorphism rings of the display equation leads to the following, with inclusion downarrows.
\begin{eqnarray*}
M_n(\End P_A) & \cong & \left( \begin{array}{cc}  
                                           A & \Hom (Q_A,A_A) \\
                                           Q & \End Q_A \end{array} \right)  \\
\downarrow \ \ \ \ \  & & \ \ \ \ \ \ \ \ \ \ \  \downarrow \\
M_n(\End P_B) & \cong &  \left( \begin{array}{cc}  
                                           \End A_B & \Hom (Q_B,A_B) \\
                                           \Hom (A_B, Q_B) & \End Q_B \end{array} \right) 
\end{eqnarray*}
The inclusions are the obvious ones including $A \into \End A_B$ given by left multiplication of elements of $A$, a Frobenius extension by the Endomorphism Ring Theorem (cf.\ \cite{NEFE}).  Since the matrix ring extension $M_n(\End P_A) \into
M_n(\End P_B )$ is Morita equivalent to $\End P_A \into \End P_B$ \cite[Example 5.1]{K2016}, it suffices to show that the matrix inclusion displayed above is itself Morita equivalent to the upper left``corner'' ring extension, which of course is Frobenius.  Define  the full idempotents $e =  \left( \begin{array}{cc}  
                                           1_A & 0 \\
                                           0 & 0 \end{array} \right)$ and $f =  \left( \begin{array}{cc}  
                                           \id_A & 0 \\
                                           0 & 0 \end{array} \right)$, which satisfy Morita equivalence  condition \cite[Prop.\ 5.3(2)]{K2016}, since $$\Hom (Q_A, A_A) \otimes_A \End A_B \cong \Hom (Q_B, A_B)$$ from the hom-tensor adjoint relation.  Of course $eM_n(\End P_A)e \cong A$ and $fM_n(\End P_B)f \cong \End A_B$. 
The idempotents are full, e.g. $$M_n(\End P_A) e M_n(\End P_A) = M_n(\End P_A),$$ since $Q_A$ is  projective. The idempotent $f$ is full since $Q_B$ and $A_B$ are progenerators and therefore similar modules, whence the composition mapping 
$$\Hom (A_B,Q_B) \otimes_{\End A_B} \Hom (Q_B,A_B) \rightarrow \End Q_B$$
and its reverse are surjective mappings.  
\end{proof}

\begin{lemma}
\label{lemma-integralformula}
If $H$ is a Hopf algebra and $M_H$ is a module, then $M_. \otimes H_. \cong M \otimes H_H$; for example,  the module $H$ satisfies the integral formula, $M \otimes H \cong (\dim M)H$ for any $M$ in the finite tensor category mod-$H$.  
\end{lemma}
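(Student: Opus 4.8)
The plan is to obtain this as the degenerate case $R = k1_H$ of the fundamental isomorphism~(\ref{eq: fundamental}), generalizing Example~\ref{example-qha} from $M = H$ to an arbitrary module $M_H$. First I would note that $(k1_H)^+ = 0$, so the quotient module collapses to $Q^H_{k1_H} = H/0 = H$, and (\ref{eq: fundamental}) then reads as an isomorphism of right $H$-modules
\[
M \otimes_k H \stackrel{\cong}{\longrightarrow} M_. \otimes H_., \qquad m \otimes_k h \longmapsto m h\1 \otimes h\2,
\]
where the domain carries the right $H$-action through its second tensorand only (so $M$ is regarded merely as a $k$-vector space) and the codomain carries the diagonal action. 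If one prefers a self-contained argument, I would instead write down the candidate inverse $m \otimes h \mapsto m S(h\1) \otimes_k h\2$ and verify the two routine points: that both maps are $H$-linear (using that $\cop$ is an algebra map) and that they are mutually inverse (using coassociativity together with the antipode axioms $h\1 S(h\2) = \eps(h)1 = S(h\1) h\2$).

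Next I would identify $M \otimes_k H$ as a right $H$-module: choosing a $k$-basis $(e_i)_{i \in I}$ of $M$ splits $M \otimes_k H = \bigoplus_{i \in I} (k e_i \otimes_k H)$ into $|I|$ copies of the right regular module $H_H$, so $M \otimes_k H \cong (\dim M)\cdot H$. Combining the two steps gives the first assertion $M_. \otimes H_. \cong M \otimes H_H$, and when $M$ lies in the finite tensor category mod-$H$ we have $\dim M < \infty$, which yields the integral formula $M \otimes H \cong (\dim M)\, H$ in mod-$H$.

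I do not expect a genuine obstacle here: the first isomorphism is the standard ``Hopf trick'', already recorded in this paper (Example~\ref{example-qha}) in the case $M = H$. The only thing demanding a little care is the bookkeeping of which tensorand is being acted on — the dotted versus undotted notation — and checking that under the displayed map the diagonal action on $M_. \otimes H_.$ matches the regular action on the $H$-factor of $M \otimes_k H$; this is a one-line computation in Sweedler notation. Should one wish to avoid invoking (\ref{eq: fundamental}) at all, the cleanest route is simply to verify that the two displayed maps are mutually inverse $H$-module homomorphisms directly.
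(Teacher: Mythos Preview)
Your proposal is correct and your ``self-contained'' alternative is exactly the paper's proof: the paper simply writes down the map $m\otimes h\mapsto mh\1\otimes h\2$ with inverse $m\otimes x\mapsto mS(x\1)\otimes x\2$ and leaves the verifications implicit. Your additional remark that this is the special case $R=k1_H$ of Eq.~(\ref{eq: fundamental}) is a pleasant contextualization (and is how Example~\ref{example-qha} obtains the case $M=H$), but it is not a different argument, since the displayed map in Eq.~(\ref{eq: fundamental}) specializes to precisely the same formula.
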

\begin{proof}
For $m \in M$ and $h \in H$, map $m \otimes h \mapsto mh\1 \otimes h\2$ with inverse $m \otimes x \mapsto mS(x\1) \otimes x\2$.   
\end{proof}

Note that the comultiplication of a Hopf algebra $\cop: H \rightarrow H \otimes H$, or its coassociative power $\cop^n: H \rightarrow H^{\otimes (n+1)}$ may be viewed as a algebra monomorphism between Hopf algebras, where $H \otimes \cdots \otimes H$ (to any power) has the usual tensor Hopf algebra structure derived from $H$.  The comultiplication monomorphism is interesting from the point of view of endomorphism algebras of tensor powers of modules in the finite tensor category mod-$H$.  For example, if $R$ is a Hopf subalgebra and $Q$ their quotient module as before, then $Q^{\otimes n}_H$ is the pullback along this monomorphism of the cyclic module-coalgebra $Q^{\otimes n}_{H^{\otimes n}}$. However, the $\cop^n$ is not a Hopf algebra morphism, since it fails to commute 
with the coproducts in $H$ and $H^{\otimes n}$, and in addition, fails to commute with the antipodes $S$ and $S \otimes \cdots \otimes S$ (but does commute with $S^2$ and $S^2 \otimes \cdots \otimes S^2$). The following is then nontrivial, but easy to prove with ring theory. 

\begin{theorem}
If $H$ is unimodular, the algebra extension $\cop^n: H \into H^{\otimes (n+1)}$ is a Frobenius extension for any $n \in N$.   
\end{theorem}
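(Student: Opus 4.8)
The plan is to reduce to the case $n=1$ and then exploit the interplay between Frobenius extensions and Frobenius $k$-algebras. By coassociativity one has $\cop^n = (\cop \otimes \id_{H^{\otimes(n-1)}}) \circ (\cop\otimes\id_{H^{\otimes(n-2)}}) \circ \cdots \circ (\cop\otimes\id_H)\circ\cop$, and each factor $\cop\otimes\id_{H^{\otimes j}}\colon H^{\otimes(j+1)}\to H^{\otimes(j+2)}$ is the base change of $\cop\colon H \to H\otimes H$ along the $k$-algebra $H^{\otimes j}$. Since a composite of Frobenius extensions is a Frobenius extension, and the base change of a Frobenius extension along any $k$-algebra is again a Frobenius extension \cite{NEFE}, it suffices to prove the $n=1$ statement: $\cop\colon H \into H\otimes H$ is a Frobenius extension whenever $H$ is unimodular. (Only this base case uses unimodularity; the algebras $H^{\otimes j}$ enter merely as rings along which one base-changes.)

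Assume now that $H$ is unimodular. By Larson--Sweedler both $H$ and $H\otimes H$ are Frobenius $k$-algebras; fix a nonzero right integral $\lambda$ on $H$ as Frobenius functional for $H$, and use $\lambda\otimes\lambda$ for $H\otimes H$. Applying Lemma~\ref{lemma-integralformula} to $M=H_H$ identifies $H\otimes H$ with its $\cop$-action $(a\otimes b)\cdot h = ah\1\otimes bh\2$ with $(\dim H)\cdot H_H$, and symmetrically on the left; hence $\cop$ presents $H\otimes H$ as a finitely generated free, in particular finitely generated projective, extension of the subalgebra $\cop(H)\cong H$. For such a finite projective extension of Frobenius $k$-algebras, the general theory (Nakayama--Tsuzuku; see \cite{NEFE, SY}) produces a $\beta$-Frobenius extension, which is an ordinary Frobenius extension exactly when the Nakayama automorphism $\nu_{H\otimes H}$ of the top algebra maps $\cop(H)$ into itself and restricts there — under the isomorphism $\cop(H)\cong H$, with compatibly chosen Frobenius functionals — to the Nakayama automorphism of $H$ up to an inner automorphism.

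To verify this, recall that the Nakayama automorphism of a tensor product of Frobenius algebras is the tensor product of the Nakayama automorphisms, so $\nu_{H\otimes H} = \nu_H\otimes\nu_H$ relative to $\lambda\otimes\lambda$. Here unimodularity is decisive: by Radford's description of the Nakayama automorphism of a finite-dimensional Hopf algebra (see \cite{NEFE, SY}), when $H$ is unimodular $\nu_H$ is a coalgebra automorphism of $H$ — it is built from $S^2$ and conjugation by the group-like modular element of $H$, both of which are coalgebra automorphisms — whereas in the non-unimodular case a winding automorphism of the modular function $m_H$ intervenes and breaks this. Being a coalgebra map, $\nu_H$ satisfies $\cop\circ\nu_H = (\nu_H\otimes\nu_H)\circ\cop$, so $\nu_{H\otimes H}(\cop(h)) = \nu_H(h\1)\otimes\nu_H(h\2) = \cop(\nu_H(h)) \in \cop(H)$, and under $\cop(H)\cong H$ this restriction is $\nu_H$ itself. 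The criterion of the previous paragraph is therefore met and $\cop\colon H \into H\otimes H$ is an ordinary Frobenius extension, completing the reduction.

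I expect the main difficulty to be bookkeeping rather than conceptual: pinning down the precise form of the Nakayama criterion for a finite projective extension of Frobenius algebras and of Radford's formula for $\nu_H$; granting those, the crux is the elementary observation that $S^2$ and conjugation by a group-like are coalgebra maps, so $\nu_H$ restricts correctly along $\cop$ precisely in the unimodular case. Alternatively, one can bypass the general criterion altogether and write down an explicit Frobenius homomorphism $E\colon H\otimes H \to H$ with a dual-bases tensor, built from $\lambda$, the antipode, and a two-sided integral of $H$; checking that $E$ is $\cop$-bimodule linear again comes down to $m_H = \eps$.
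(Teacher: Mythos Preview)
Your argument is correct and shares its core with the paper's proof: both establish freeness of $H^{\otimes(n+1)}$ over $\cop^n(H)$ via Lemma~\ref{lemma-integralformula} and then invoke the Pareigis (equivalently Nakayama--Tsuzuku) criterion, using that unimodularity forces the Nakayama automorphism of $H$ to equal $S^2$. (Your aside about ``conjugation by the group-like modular element'' is superfluous here---when $H$ is unimodular the modular function is $\eps$, so $\nu_H = S^2$ on the nose, and the paper states exactly this.) The one genuine structural difference is that you first reduce to the case $n=1$ by factoring $\cop^n$ as a composite of base changes of $\cop$ and appealing to transitivity and base-change stability of Frobenius extensions, whereas the paper applies Pareigis directly to $\cop^n$ for arbitrary $n$, observing that $(S^2)^{\otimes(n+1)}$ stabilizes $\cop^n(H)$ and restricts there to $S^2$. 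Your reduction is a clean organizational move that isolates precisely where unimodularity enters, but the paper's direct route is scarcely longer since the Nakayama check for general $n$ is no harder than for $n=1$.
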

\begin{proof}
We use Pareigis's Theorem in \cite[Prop.\ 5.1]{KSt}.  Since $H$ has a two-sided nonzero integral $t_H$,
$H \otimes H$ has as well $t_H \otimes t_H$, and also the higher tensor powers of $H$ are unimodular.  
The formula for Nakayama automorphism of $H$ \cite[Lemma 4.5]{KSt} involves $S^2$, and the modular function, which falls away since it is trivial for unimodular Hopf algebras: in conclusion, the Nakayama automorphism for $H^{\otimes n}$ is ${S^2}^{\otimes n}$ for each $n \geq 1$.  Note
then that the Nakayama automorphism stabilizes the image of $H$ under $\cop^n$.

Finally, we note that $(H^{\otimes (n+1)})_H$ is free, since taking $M = H^{\otimes n}$ in the Lemma yields
$(H^{\otimes (n+1)})_H \cong (\dim H)^n \cdot H_H$.  By Pareigis's Theorem, the algebra extension $\cop^n$ is a $\beta$-Frobenius extension with $\beta = (S^2 \otimes S^2)(S^{-2} \otimes S^{-2}) = \id_{\cop^n(H)}$, i.e., an ordinary Frobenius extension.  
\end{proof}
\begin{remark}
\begin{rm}
The stable category of a finite-dimensional algebra $A$ is the quotient category mod-$A$ modulo the ``ideal'' of projective homomorphisms between arbitrary $A$-modules. These are the $A$-module homomorphisms that factor through a projective $A$-module in mod-$A$ \cite{SY}.  Alternatively, these are homomorphisms that have  a matrix-like representation coming from projective bases.  For $A$ a Hopf algebra, \cite[Chen-Hiss]{CH} establish that the projective homomophisms are images of the trace mapping from plain linear mappings (and conversely).  As their proofs show, this generalizes to a Frobenius algebra $A$.  Introducing a subalgebra $B \subseteq A$, assuming $A$ is
a Frobenius extension of $B$ and replacing projectives with the larger subcategory of $B$-relative $A$-projectives, their proofs may also be modified to show that 
relative projective homomorphisms are precisely described by the images of trace map from
$\Hom(X_A,Y_A) \rightarrow \Hom (X_B,Y_B)$ (cf.\ \cite[Theorem 2.3, p.\ 95]{L}).  Modding out by the relative projective mappings yields then a $B$-relative stable category of potential interest.  For group algebras $B \subseteq A = kG$ with quotient $Q$, this is the category \textbf{stmod}${}_Q(kG)$  in the  recent paper \cite{La} in modular representation theory.   
\end{rm}
\end{remark}
\begin{remark}
\begin{rm}
The viewpoint of interior algebras and induced algebra of Puig and \cite{CAT} is  related to the approach of the endomorphism ring tower of a Frobenius extension $A \subseteq B$ with Frobenius homomorphism $E: A \rightarrow B$ and dual bases tensor $\sum_i x_i \otimes_B y_i$ (see \cite[Section 4.1]{LK2012} for a complete set of tower equations).  Then (*) $\End A_B \cong A \otimes_B A$ via $\alpha \mapsto \sum_i \alpha(x_i) \otimes_B y_i$ with inverse $a \otimes_B a' \mapsto aE(a'-)$.  The induced ``E-multiplication'' is given by $(a \otimes_B a')(x \otimes_B y) = aE(a'x) \otimes_B y$ with  $1$ the dual bases tensor.  The natural algebra extension $ A \into A \otimes_B A$ is itself a Frobenius extension with Frobenius homomorphism $E_1(x \otimes_B y) = xy$ with dual bases tensor $\sum_i x_i \otimes_B 1_A \otimes_B y_i$: the so-called Endomorphism Ring Theorem \cite{NEFE}.  Using (*) once again and 
after some elementary tensor cancellations, we arrive at 
\begin{eqnarray}
\label{eq: one}
\End A \otimes_B A_A & \cong & A \otimes_B A \otimes_B A \\
\label{eq: two}
\beta & \longmapsto & \sum_i \beta(x_i \otimes_B 1_A) \otimes_B y_i \\
\label{eq: three}
(x \otimes_B y \mapsto a \otimes_B a'E(a''x)y) & \leftarrow & a \otimes_B a' \otimes_B a'' 
\end{eqnarray}
These are algebra isomorphisms with respect to the $E_1$-multiplication given by 
\begin{equation}
\label{eq: four}
(u \otimes_B v \otimes w) (x \otimes_B y \otimes_B z) = u \otimes_B vE(wx)y \otimes_B z
\end{equation}
(compare \cite[(4.8)]{LK2012}).  For example, if $A$, $B$ are the group algebras
of a subgroup-finite group pair,  the formula is that of Puig's \cite[Example 2.1]{CAT}.  
Now if $B \rightarrow C$ is an algebra homomorphism, there is a natural $B$-$C$-bimodule structure on $C$ induced from this, making $C$ a $B$-interior algebra;
e.g. $C$ might be $eA$, or $eB$ where $e$ is a central idempotent in $A$, or $B$.  Linkelman's induced algebra is  $\Ind_A(C) := \End A \otimes_B C_C$,
an $A$-interior algebra.  In fact, the Eqs.~(\ref{eq: one})-(\ref{eq: four}) generalize completely straightforwardly to an algebra isomorphism, 
\begin{equation}
\label{eq: interior-inducedalgebras}
\End A \otimes_B C_C \cong A \otimes_B C \otimes_B A
\end{equation}
where the mappings and multiplication are omitted (just let $v,y,a' \in C$ and replace $1_A$ with $1_C$). Notice how (*) is recovered with $C = B$.  The paper \cite{CAT} shows how Eq~(\ref{eq: interior-inducedalgebras}) extends to a $\beta$-Frobenius extension $A \supset B$ such as a general Hopf subalgebra pair $H \supset R$.   
\end{rm}
\end{remark}
\begin{remark}
\begin{rm}
In \cite{SP} and two fundamental references  from 2011 therein, the notion of separable equivalence is an interesting weakening of Morita equivalence, which still preserves homological properties of rings or finite-dimensional algebras: for example,  finite generation of Hochschild cohomology, complexity, QF conditions, noetherianess, polynomial identity rings, global dimension,  representation type of finite dimensional algebras and more (compare with \cite[Theorem 6.1]{LK95}).
Two rings $A$ and $B$ are \textit{separably equivalent} (or split separably equivalent \cite[Prop.\ 6.3]{LK95}) if there are bimodules and one-sided progenerators ${}_AP_B$ and ${}_BQ_A$ such that ${}_AA_A \| P \otimes_B Q$
and ${}_BB_B \| Q \otimes_A P$ via  split epis $\nu$ and $\mu$, respectively.  They are more strongly \textit{symmetric separably equivalent}
(or finite separably equivalent \cite[Def.\ 6.1]{LK95}) 
if $P \otimes_B -$ and $Q \otimes_A -$ are adjoint functors in either order between categories of modules, i.e. Frobenius functors \cite[2.2]{NEFE}.
Symmetric separable equivalence provides extra structure such as, among other things,  $P \otimes_BQ$
is a ring with $\mu$-multiplication (for example, the E-multiplication in \cite[3.1]{NEFE}), isomorphic to $\End P_B$, which extends $A$ as a Frobenius extension; symmetrical statements are valid for the Frobenius homomorphism $\mu: Q \otimes_A P \rightarrow B$. The paper \cite{LK95} gives 
a half-dozen examples of symmetric separably equivalent algebras stemming from classical separable extensions. Note that the centers of equivalent algebras are not necessarily isomorphic, unlike for Morita equivalence; for example, any two finite-dimensional semisimple algebras over $\C$ are symmetric separably equivalent by the remarks following Eq.~(\ref{eq: tower}). 
\end{rm}
\end{remark}
\subsection{Ring-theoretic point of view}  Given a ring $A$ with right ideal $I$, the cyclic $A$-module
$A/I$ has endomorphism ring $\End (A/I)_A \cong B/I$ where $B$ is the \textit{idealizer} of $I$ in $A$,
the largest subring of $A$ in which $I$ is a two-sided ideal.  The details are in the lemma below.
\begin{lemma}
\label{lemma-idealizer}
The endomorphism ring of a cyclic module is determined by the idealizer of the annihilator ideal of a generator. 
\end{lemma}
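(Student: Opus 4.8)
The plan is to realize the isomorphism concretely by evaluating an endomorphism of the cyclic module $A/I$ at its distinguished generator $\overline{1}$. Throughout, write $B = \{\, b \in A : bI \subseteq I \,\}$ for the idealizer. First I would record the elementary structural facts: $B$ is a subring of $A$ containing $I$; $I$ is a two-sided ideal of $B$ (it is a right ideal of $A$, hence of $B$, and $BI \subseteq I$ by the very definition of $B$); and $B$ is the \emph{largest} subring of $A$ in which $I$ is a two-sided ideal, since any subring $S$ with $SI \subseteq I$ lies in $B$. This makes the quotient ring $B/I$ meaningful and pins down $B$ uniquely.

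Next I would define $\Phi : B \to \End (A/I)_A$ by $\Phi(b) = \phi_b$, where $\phi_b(\overline{a}) = \overline{ba}$ for $a \in A$. The condition $bI \subseteq I$, i.e. $b \in B$, is exactly what is needed for $\phi_b$ to be a well-defined map $A/I \to A/I$; right $A$-linearity is immediate. One then checks that $\Phi$ is additive and that $\phi_b \circ \phi_c(\overline{a}) = \overline{bca} = \phi_{bc}(\overline{a})$, so that $\Phi$ is a ring homomorphism — here using the convention that endomorphisms of a right module are composed on the left, which is why left multiplication yields a homomorphism rather than an anti-homomorphism.

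Then I would identify kernel and image. If $\Phi(b) = 0$ then $\overline{ba} = \overline{0}$ for all $a$; taking $a = 1_A$ gives $b \in I$, and conversely $b \in I$ forces $bA \subseteq I$ because $I$ is a right ideal, so $\Ker \Phi = I$. For surjectivity, given $\psi \in \End (A/I)_A$ put $\overline{b} := \psi(\overline{1})$; right linearity gives $\psi(\overline{a}) = \psi(\overline{1})\,a = \overline{ba}$ for all $a$, and evaluating the well-defined map $\psi$ on $\overline{i} = \overline{0}$ for $i \in I$ yields $\overline{bi} = \overline{0}$, i.e. $bI \subseteq I$, so $b \in B$ and $\psi = \Phi(b)$. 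The first isomorphism theorem now gives $\End (A/I)_A \cong B/I$, as claimed.

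There is really no serious obstacle here: every verification is a one-line computation in $A/I$. The only point that repays a moment's attention is the side bookkeeping — confirming that $\Phi$ is a genuine ring homomorphism (not an anti-homomorphism) because $A/I$ is a \emph{right} module on which $B$ acts by left multiplication, and that the single value $\psi(\overline{1})$ determines all of $\psi$ since $A/I$ is cyclic, generated by $\overline{1} = 1_A + I$.
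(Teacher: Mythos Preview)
Your proof is correct and follows essentially the same approach as the paper: both evaluate an arbitrary endomorphism at the generator $\overline{1}$ to land in the idealizer $B$, and conversely observe that left multiplication by elements of $B$ gives well-defined endomorphisms. Your version is simply more explicit in packaging this as a ring homomorphism $\Phi$ with kernel $I$ and applying the first isomorphism theorem, whereas the paper leaves those details to the reader.
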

\begin{proof}
Using the notation above, define the idealizer of $I$ in $A$ to be $B : = \{ x \in A \| \, xI \subseteq I \}$.  Clearly $B$ is a subalgebra of $A$ containing $I$ as a two-sided ideal, and maximal subalgebra in which $I$ is $2$-sided by its definition. Given $\alpha \in \End (A_I)_A$, consider $\alpha(1 + I): = b + I$,
which satisfies $\alpha(1 + I)I = I = (b + I)I = bI + I$, so $bI \subseteq I$ and $b \in B$.  
Conversely, left multiplication by each $b \in B$ is a well-defined endomorphism of $A/I$.  
\end{proof}

Applied to a Hopf subalgebra $R \subseteq H$ and $Q = H/R^+H$, the idealizer of the right ideal $R^+H$ in $H$ is $T = \{ h \in H \| \, hR^+H \subseteq R^+H \}$.  The idealizer $T$ contains for example $R^+H$, $R$, $Ht_R$, $t_H$ (where $t$ denotes nonzero integral as usual) 
and the centralizer of $R$ in $H$, but is not a Hopf subalgebra, nor left or right coideal subalgebra of $H$ in general. Since $\End Q_H \cong T/R^+H$, there is a monomorphism of $\End Q_H \into Q$
given by $\alpha\mapsto \alpha(\overline{1_H})$, whose image is a subalgebra in the coalgebra $Q$ (!). This map is a surjective if and only if $R$ is a normal Hopf subalgebra in $H$, summarized in the lemma, with proof left as an exercise.
\begin{lemma}
The evaluation mapping $\End Q_H \rightarrow Q$ is an isomorphism if and only if $R^+H = HR^+$.
\end{lemma}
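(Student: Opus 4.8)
The plan is to analyze the evaluation map $\varphi\colon \End Q_H \to Q$, $\alpha \mapsto \alpha(\overline{1_H})$, using the identification $\End Q_H \cong T/R^+H$ established just above, where $T = \{ h \in H \mid hR^+H \subseteq R^+H\}$ is the idealizer of the right ideal $R^+H$ in $H$. Under this identification, left multiplication by $h \in T$ corresponds to the endomorphism $\overline{x} \mapsto \overline{hx}$, so the evaluation map sends the class $h + R^+H$ to $h \cdot \overline{1_H} = \overline{h}$; that is, $\varphi$ is precisely the map $T/R^+H \to H/R^+H$ induced by the inclusion $T \hookrightarrow H$. This map is automatically injective (both sides are quotients by $R^+H$ and $R^+H \subseteq T$), so the content of the lemma is exactly: $\varphi$ is surjective $\iff$ $T = H$ $\iff$ $R^+H = HR^+$.

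First I would dispose of the easy direction. If $R^+H = HR^+$, then $R^+H$ is a two-sided ideal of $H$, so the idealizer $T$ equals all of $H$, and hence $\varphi\colon H/R^+H \to H/R^+H$ is the identity, in particular surjective (this also recovers the fact noted in Example~\ref{example-qha} that $Q$ is then a Hopf algebra). Conversely, suppose $\varphi$ is surjective. Surjectivity of $T/R^+H \to H/R^+H$ means $T + R^+H = H$, and since $R^+H \subseteq T$ this forces $T = H$, i.e.\ $hR^+H \subseteq R^+H$ for every $h \in H$. In other words $R^+H$ is a left ideal as well as a right ideal, hence a two-sided ideal of $H$. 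It remains to upgrade ``$R^+H$ is a two-sided ideal'' to ``$R^+H = HR^+$''.

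For that last step I would use that $R^+H$, being a two-sided ideal which is also a coideal (it is a coideal by construction, since $Q$ is a coalgebra), is in fact a Hopf ideal: one checks $S(R^+H) \subseteq R^+H$ from the fact that $S(R^+) \subseteq R^+$ (as $S(R) \subseteq R$ and $\eps \circ S = \eps$) together with the two-sided ideal property, $S(R^+H) = S(H)S(R^+) \subseteq HR^+ \subseteq$ (two-sided ideal) $= R^+H$. Applying $S$ again and using $S^2$-invariance, or simply using $S(R^+H) = R^+H$ once both inclusions are in hand, gives $R^+H = S(R^+H) \supseteq R^+ \cdot$ nothing directly, so instead I compute $HR^+ = S(S(H)S(R^+))\,$-style: more cleanly, from $S(R^+H) = R^+H$ and $S(R^+H) = S(H)S(R^+)$ one gets, replacing $H$ by $S(H) = H$ and $R^+$ by $S(R^+) \subseteq R^+$ (equality holds since $S|_R$ is bijective on $R$, hence on $R^+$), that $HR^+ \subseteq R^+H$; the reverse inclusion follows symmetrically by applying $S$ to $HR^+$. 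Hence $R^+H = HR^+$, which is the normality condition (see the discussion preceding Example~\ref{example-qha}).

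The main obstacle I anticipate is purely bookkeeping: making sure the identification $\End Q_H \cong T/R^+H$ from Lemma~\ref{lemma-idealizer} is transported correctly so that the abstract ``evaluation at $\overline{1_H}$'' really is the concrete map induced by $T \hookrightarrow H$, and then handling the antipode argument carefully enough that one genuinely gets the \emph{equality} $R^+H = HR^+$ rather than just a one-sided containment. Everything else — injectivity, the trivial direction, and the equivalence $T = H \iff R^+H$ two-sided — is immediate from the definitions. This is why the paper leaves it as an exercise.
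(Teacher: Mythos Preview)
Your proof is correct. The paper leaves this lemma as an exercise, so there is no proof to compare against; your argument via the idealizer identification $\End Q_H \cong T/R^+H$ and the reduction to ``$T=H$ iff $R^+H$ is two-sided iff $R^+H = HR^+$'' is exactly the intended route.

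The only place worth tidying is the antipode step, which you present in a somewhat roundabout way. Once $R^+H$ is a two-sided ideal you have $HR^+ \subseteq HR^+H = R^+H$ immediately. Now apply the bijective anti-automorphism $S$: since $S(H)=H$ and $S(R^+)=R^+$ (the latter because $R$ is a Hopf subalgebra and $\eps\circ S=\eps$), one has $S(HR^+)=S(R^+)S(H)=R^+H$ and $S(R^+H)=S(H)S(R^+)=HR^+$. Applying $S$ to the inclusion $HR^+\subseteq R^+H$ therefore yields $R^+H\subseteq HR^+$, and equality follows. There is no need to invoke the Hopf-ideal property or $S^2$-invariance.
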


See Example~\ref{ex-eightdim} below for a computation of an idealizer $T$ of $Q$ for the $8$-dimensional small quantum group and the $4$-dimensional Sweedler Hopf subalgebra (which is not normal).  The idealizer subalgebra associated to the coset $G$-space $Q$ in Lemma~\ref{lem-Hecke} is computed in terms of a Schur basis in \cite[12.10]{L}.  Since $Q^{\otimes n}$  is isomorphic to  the quotient module of the Hopf subalgebra pair $R^{\otimes n} \subseteq H^{\otimes n}$,  Lemma~\ref{lemma-idealizer}  applies to show that 
\begin{equation}
\End Q^{\otimes n}_{H^{\otimes n}} \cong T^{\otimes n}/ ({R^{\otimes n}}^+ ) H^{\otimes n}.
\end{equation}


\section{Trace ideals of tensor powers of Q}
\label{section-traceideal}

Recall that for any ring $R$ and module $M_R$, the trace ideal $$\tau(M_R) = \{ \sum_i f_i(m_i) \| \, f_i \in \Hom (M_R, R_R), m_i \in M \}.$$  Note then that $N_R \| M_R$
implies $\tau(N_R) \subseteq \tau(M_R)$.  Hence, $\tau(N_R) = \tau(M_R)$ is a necessary condition for $N_R \sim M_R$. Recall that the trace ideal $\tau(M_R) = R$ if and only if $M_R$ is a generator.  Recall
that generators are faithful, and conversely if $R$ is a QF  (e.g. Frobenius or Hopf) algebra and $M$ finitely generated.  

Let $R \subseteq H$ be a Hopf subalgebra in a finite-dimensional Hopf algebra, $Q = Q^H_R$ the
quotient module in mod-$H$ defined above, and $t_R$ a nonzero right integral in $R$. 
\begin{prop}
The trace ideal $\tau(Q_H) = Ht_R H$.
\end{prop}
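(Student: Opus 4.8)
The plan is to prove the two inclusions $\tau(Q_H) \subseteq Ht_RH$ and $Ht_RH \subseteq \tau(Q_H)$ separately, using the identification $Q_H \cong t_RH$ as right $H$-modules recalled from \cite[Lemma 3.2]{K2014} (via $\overline{h} \mapsto t_R h$, well-defined because $t_R R^+H = 0$). Under this isomorphism, $\tau(Q_H) = \tau(t_RH)$, so it suffices to compute the trace ideal of the right ideal $t_RH \subseteq H$.

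First I would handle $\tau(t_RH) \subseteq Ht_RH$. Any $f \in \Hom(t_RH, H_H)$ is right $H$-linear, hence determined by $f(t_R) \in H$, and since $t_R = t_R \cdot 1$ lies in the submodule, $f(t_R h) = f(t_R) h$; moreover $f(t_R) = f(t_R \cdot 1)$ and the constraint that $f$ be well-defined on $t_RH$ forces nothing beyond $f(t_R) \in H$ being arbitrary \emph{in principle}, but the image of $f$ is $f(t_R)H \subseteq H$. The key point is that, because $t_R$ generates $t_RH$ and every element of the module is of the form $t_R h$, each generator $f_i(m_i)$ of $\tau(t_RH)$ has the shape $f_i(t_R h_i) = f_i(t_R) h_i$. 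Now I claim $f(t_R) \in Ht_RH$: indeed $f$ extends (by freeness of ${}_RH$ or a projectivity argument, since $Q_H \cong k \otimes_R H$ is $R$-relative projective) — more directly, using that $t_R$ is a right integral in $R$ so $r t_R = \eps(r) t_R$ only on the left is the wrong side; instead note $t_R H$ as a left $R$-module satisfies $R t_R H \subseteq$ ... the cleanest route is: a right $H$-module map $t_RH \to H$ corresponds under Hom-tensor adjunction, using $t_RH \cong t_R H$ and the left ideal structure, to showing $f(t_R)$ must be left-divisible by a conjugate of $t_R$; here the Frobenius/QF structure of $H$ over $R$ guarantees $\Hom(t_RH,H_H) \cong Ht_R$ as the relevant left module. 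This gives $\tau(t_RH) = Ht_R \cdot H = Ht_RH$.

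For the reverse inclusion $Ht_RH \subseteq \tau(t_RH)$, I would exhibit explicit homomorphisms: for each $a \in H$, left multiplication $\lambda_a : t_RH \to H$, $t_R h \mapsto a t_R h$, is well-defined and right $H$-linear precisely when $a t_R \in H$ (always) \emph{and} this is independent of the representative — which it is, since $t_R h = t_R h'$ implies $a t_R h = a t_R h'$. Then $\lambda_a(t_R h) = a t_R h$, so $a t_R h \in \tau(t_RH)$ for all $a, h$, giving $Ht_RH \subseteq \tau(t_RH)$.

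The main obstacle is the inclusion $\tau(Q_H) \subseteq Ht_RH$, i.e. showing every right $H$-module homomorphism $t_RH \to H$ has image inside $Ht_RH$: one must argue that $f(t_R)$ is necessarily left-divisible by $t_R$, which is not automatic from right-linearity alone and requires using the Frobenius-extension structure of $H$ over $R$ (Schneider's theorem, \cite{NEFE}) or equivalently the explicit computation of $\Hom(t_RH, H_H)$ via the idealizer/ring-theoretic description of $\End Q_H$ in Lemma~\ref{lemma-idealizer} together with the known splitting $H \cong t_RH \oplus *$ as right $H$-modules when $R$ is semisimple — but in the non-semisimple case this decomposition fails, so the argument must instead go through a direct computation with a Frobenius coordinate system for $H \mid R$ (dual bases $\{x_i\},\{y_i\}$), writing $f(t_R) = \sum_i ? $ and recognizing the sum as an element of $Ht_RH$. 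I expect this step to require care about whether one needs $t_R$ to be a \emph{two-sided} integral in $R$ or merely one-sided, and about the side conventions in $R^+H$ versus $HR^+$.
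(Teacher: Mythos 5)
Your reduction to computing $\tau(t_RH)$ via $Q_H \cong t_RH$ and your proof of the easy inclusion $Ht_RH \subseteq \tau(Q_H)$ (restrict left multiplication $\lambda_a$ to the submodule $t_RH$) are correct and match the substance of the paper's argument. The genuine gap is the other inclusion: you never establish that every $f \in \Hom(t_RH, H_H)$ has image inside $Ht_RH$, i.e.\ that $\Hom(Q_H,H_H) \cong Ht_R$ acting by left multiplication; you assert it, list several possible routes, and explicitly defer the computation. Two of those gestures are also off target. First, the claim that well-definedness of $f$ on $t_RH$ ``forces nothing'' is wrong: well-definedness is exactly the condition that $f(t_R)$ annihilate the right annihilator of $t_R$ in $H$, and that constraint (combined with $H$ being a Frobenius algebra) is what pins $f(t_R)$ down to $Ht_R$. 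Second, the appeal to $H$ being an ordinary Frobenius extension of $R$ (``Schneider's theorem'') is not available in general: $H \supseteq R$ is only a $\beta$-Frobenius extension unless the modular function of $H$ restricts to that of $R$, which is precisely the content of Theorem~\ref{theorem-answer} and is not a hypothesis here. Likewise the splitting $H \cong t_RH \oplus *$ you mention requires $R$ semisimple, as you note yourself.

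The paper closes exactly this gap by citing \cite[Prop.\ 3.10]{K2016}: left multiplication gives an isomorphism $Ht_R \stackrel{\sim}{\to} \Hom(Q_H,H_H)$, so the sum of all images is $Ht_RH$. If you want a self-contained argument, the quickest uses that $H$ itself is a Frobenius, hence self-injective, algebra: any $f \colon t_RH \to H_H$ extends to an endomorphism of $H_H$, which is left multiplication by some $a \in H$, so $f(t_Rh) = at_Rh \in Ht_RH$. Alternatively, apply the induction--restriction adjunction to $Q \cong k \otimes_R H$ to get $\Hom(Q_H,H_H) \cong \Hom_R(k,H_R) = \{x \in H : xr = \eps(r)x \ \forall r \in R\}$, and then use Nichols--Zoeller freeness of $H_R$ together with one-dimensionality of the space of right integrals in $R$ to see that this space is exactly $Ht_R$; note this needs $t_R$ only as a one-sided (right) integral, answering the side question you raise at the end.
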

\begin{proof}
Follows from \cite[Prop.\ 3.10]{K2016} since  $Ht_R \stackrel{\sim}{\longrightarrow} \Hom (Q_H, H_H)$
via left multiplication by $ht_R$ for each $h \in H$.  
\end{proof}

\begin{example}
\label{ex-eightdim}
\begin{rm}
Consider $H =   \overline{U_q}(sl_2(\C))$ as in Example~\ref{example-smallqg} at the $4$'th root of unity $q = i$, which is the $8$ dimensional algebra  generated  by $K,E,F$ where $K^2 = 1$, $E^2 = 0 = F^2$, $EF = FE$, $KE = -EK$, and $KF = -FK$.  Let $R$ be the Hopf subalgebra of dimension $4$ generated by $K,E$ (isomorphic to the Taft algebra).  Then a calculation shows that 
$Q$  is spanned by $\overline{1}$ and $\overline{F}$, that $t_R = E(1 + K)$, and
$$Ht_RH = \C t_R + \C EF + \C EFK $$
a 3-dimensional ideal containing (rad $H)^2$.

Also note that $\Ann Q_H = EH$ (a Hopf ideal), since $\overline{F}E = \overline{EF} = 0$.  Then  $\ell_Q = 1$, but we cannot apply Theorem~\ref{theorem-ell} to conclude $d_h(R,H) = 3$ since $H$ is not semisimple. 
The ordinary depth satisfies $3 \leq d(R, H) \leq 4$ by a computation similar to \cite[Example 1.10]{HKS}, which implies that $d_h(R,H) = 3$ or $5$.  My calculations indicate that $Q \not \sim Q\otimes Q$ as $H$-modules, implying $d_h(R,H) = 5$. 

In addition, the idealizer $T$ is $7$-dimensional spanned by $R$, $(1+K)EF$, $F(1+K)$ and  $(1-K)EF$, a subalgebra which is not a left or right coideal subalgebra.  Since $R^+H = EH + (1-K)H$ is $6$-dimensional, $\End Q_H$ is one-dimensional, which may also be verified directly.
\end{rm}
\end{example}

\subsection{Ascending chain of trace ideals of tensor powers of $Q$} Since
the tensor powers of $Q$ satisfy $Q^{\otimes m} \oplus * \cong Q^{\otimes (m+1)}$ as $H$-modules for each integer $m \geq 1$, it follows that their trace ideals satisfy 
$\tau(Q^{\otimes m}) \subseteq \tau(Q^{\otimes (m+1)})$. Let $L_Q$ denote
the length of the ascending chain,
\begin{equation}
\tau(Q) \subset \tau(Q^{\otimes 2}) \subset \cdots \subset \tau(Q^{\otimes L_Q}) :=  \mathcal{I}
\end{equation}
 necessarily finite since $H$ is finite-dimensional.  If $\mathcal{I} = H$, then
$Q^{\otimes L_Q}$ is faithful, implying that $\Ann Q^{\otimes L_Q} = 0$. Then 
the length $\ell_Q$ of the descending chain of 
$\{ \Ann Q^{\otimes n}\}_{n \in \N}$ satisfies $\ell_Q \leq L_Q$. Conversely,
if $\mathcal{I} = H$, and $\Ann Q^{\otimes \ell_Q} = 0$, then $\tau(Q^{\otimes \ell_Q}) = H$, which shows that $L_Q \leq \ell_Q$.  Recall from \cite{HKY} that
an $H$-module $W$ is conditionally faithful, if one of its tensor powers is faithful.
\begin{prop}
If $Q$ is conditionally faithful, then $L_Q = \ell_Q$.   
\end{prop}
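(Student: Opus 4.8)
The plan is to chain together the two inequalities that were already derived in the paragraph preceding the proposition. The discussion there establishes, for the situation $\mathcal{I} = \tau(Q^{\otimes L_Q}) = H$, that $\ell_Q \leq L_Q$, and conversely that if $\mathcal{I} = H$ together with $\Ann Q^{\otimes \ell_Q} = 0$ then $L_Q \leq \ell_Q$. So the entire content of the proof is to verify that the hypothesis ``$Q$ is conditionally faithful'' supplies exactly the two side conditions $\mathcal{I} = H$ and $\Ann Q^{\otimes \ell_Q} = 0$ that make both of those implications fire.

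First I would unwind the definition: $Q$ conditionally faithful means $Q^{\otimes N}$ is a faithful $H$-module for some $N \in \N$, i.e.\ $\Ann Q^{\otimes N} = 0$. Since the chain $\{\Ann Q^{\otimes n}\}_n$ is descending (Eq.~(\ref{eq: dca})) and stabilizes at $\ell_Q$, faithfulness of one tensor power forces $\Ann Q^{\otimes n} = 0$ for all $n \geq \min\{N, \ell_Q\}$; in particular the stable value is $\Ann Q^{\otimes \ell_Q} = 0$. That is the first side condition. Second, because $H$ is a finite-dimensional Hopf algebra, hence QF, a finitely generated faithful module is a generator, so $\tau(Q^{\otimes N}_H) = H$; since the trace-ideal chain is ascending and $N \leq L_Q$ by definition of $L_Q$ (or the chain has already stabilized), we get $\tau(Q^{\otimes L_Q}) = \mathcal{I} = H$. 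That is the second side condition.

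With both $\mathcal{I} = H$ and $\Ann Q^{\otimes \ell_Q} = 0$ in hand, I would then simply quote the two implications already spelled out in the text: $\mathcal{I} = H$ gives $\ell_Q \leq L_Q$, and $\mathcal{I} = H$ plus $\Ann Q^{\otimes \ell_Q} = 0$ gives $L_Q \leq \ell_Q$. Combining yields $L_Q = \ell_Q$, as claimed.

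I do not anticipate a genuine obstacle here — the proposition is essentially a bookkeeping corollary of the two bounds the author has just derived. The only point requiring the slightest care is making sure the stabilization indices line up: one must confirm that faithfulness at some power $N$ really does propagate to the \emph{stable} annihilator $\Ann Q^{\otimes \ell_Q}$ and the \emph{terminal} trace ideal $\tau(Q^{\otimes L_Q})$, rather than merely at the ad hoc power $N$. This is immediate from monotonicity of the two chains together with the QF/generator $\Leftrightarrow$ faithful dictionary recalled at the start of the section, so the proof is short.
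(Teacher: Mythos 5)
Your proof is correct and is essentially the paper's intended argument: the proposition is stated as a direct consequence of the two inequalities derived in the preceding paragraph, and you verify exactly as intended that conditional faithfulness propagates along the stabilized annihilator chain to give $\Ann Q^{\otimes \ell_Q} = 0$, hence (via the QF generator--faithful equivalence) $\mathcal{I} = H$, so both bounds apply. Nothing further is needed.
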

When computing depth for general $Q$, with nontrivial maximal Hopf ideal $I$ in $\Ann Q_H$,  we recall \cite[Lemma 1.5]{HKS} implying that the depth of $Q_H$ is  equal to the depth of $Q$ as a (conditionally faithful) $H/I$-module.  Thus the proposition is useful in this situation as well.  

\section{Minimal polynomials of $Q$ in $A(R)$ and $A(H)$}

In this section, we take $R \subseteq H$ to be a Hopf subalgebra pair of \textit{semisimple} Hopf algebras over an algebraically closed field $k$ of characteristic zero.  We identify the quotient module $Q = Q^H_R$ with its isoclass in the Green ring $A(H)$,
equal to $K_0(H)$ since $H$ is semisimple; $A(H)$ has basis of simple $H$-modules.  The restriction $Q_R$ also represents an isoclass in $A(R) = K_0(R)$.  If $Q_R$ satisfies a minimum polynomial $m(X) = 0$ in $A(R)$,  we will note in this section that $Q_H$ satisfies $Xm(X) = 0$, in most cases a minimum polynomial equation in $A(H)$.

Define a linear endomorphism $\mathcal{T}: A(R) \rightarrow A(R)$ by $\mathcal{T}(X) = X \uparrow^H \downarrow_R$
for every $R$-module $X$ and its isoclass. Similarly we define a linear endomorphism $\mathcal{U}: A(H) \rightarrow A(H)$ by restriction followed by induction.  By Eq.~(\ref{eq: fundamental}), there is a natural isomorphism $\mathcal{U}(Y) = Y \otimes_R H \cong Y \otimes Q$, i.e., the right multiplication by $Q$, an operator in $\End A(H)$
represents the endofunctor $\mathcal{U}$.  By an induction argument, a polynomial $p(X)$ in the powers of $\mathcal{U}$ are then given
by $p(\mathcal{U}) = p(Q)$  in $\End A(H)$, where the tensor powers of $Q$ are again identified with their right multiplication operators on the Green ring $A(H)$.

Let $M$ denote the matrix $K_0(H) \rightarrow K_0(R)$ of restriction relative to the bases of simples; i.e., for each $H$-simple $U_j$ ($j = 1,\ldots,q$), express its restriction $U_j\downarrow_R = \sum_{i=1}^q m_{ij} V_i$, where $V_i$ ($i=1,\ldots,p$) are the $R$-simples and $m_{ij}$
the nonnegative integer coefficients of $M$, a $p \times q$-matrix.  Since $M$ is derived from restriction of modules, each column of $M$ is nonzero.

\begin{example}\label{example-perm}
\begin{rm}
Let $H = \C S_3$, the symmetic group algebra isomorphic to $\C \oplus M_2(\C) \oplus \C$, and $R = \C S_2 \cong \C \oplus \C$, embedded by fixing one letter.  The restiction of $H$-simples is well-known (e.g. \cite{FH}) to be given by the (weighted) bipartite graph as follows
\begin{equation}
\label{eq: invertedW}
\begin{array}{rcccccccl}
 \stackrel{1}{\bullet} & & &&\stackrel{2}{\bullet} & & & &\stackrel{1}{\bullet} \\
&&&&&&&& \\
& \setminus & & / & & \setminus & &  /  & \\
&&&&&&&& \\
 & & \stackrel{\circ}{\mbox{\scriptsize 1}} & & & & \stackrel{\circ}{\mbox{\scriptsize 1}}  & &
\end{array} $$
with $M$ as the incidence matrix (from left to right) 
 $$ M = \left( \begin{array}{ccc}
1 &  1 & 0  \\
0 &  1 & 1 
\end{array}
\right) \end{equation}

\end{rm}
\end{example}

Since restriction of $H$-modules to $R$-modules and induction of $R$-modules to $H$-modules are adjoint functors (e.g., \cite{NEFE}), the transpose $M^t$
represents the linear mapping $K_0(R) \rightarrow K_0(H)$ given by
$V_i \otimes_R H = \sum_{j=1}^q m_{ij}U_j$.  
In other words, the columns of $M$ show the restriction of a top row of $H$-simples, and the rows of $M$ show induction of the bottom row of $R$-simples at the same time, in an incidence diagram drawn as a  weighted bicolor multi-graph  of any inclusion
of subalgebra pairs of semisimple $k$-algebras (see \cite{GHJ} for the exact details).  Thus, each row (and each column) of $M$ is nonzero.  

It follows that the linear endomorphism $\mathcal{T} \in \End_k A(R)$ has matrix representation $B = MM^t$, a symmetric  matrix of order $p$ (with nonzero diagonal elements). Thus, $B$ has a full set of $p$ eigenvalues.   In these terms, the matrix of $\mathcal{U}$ relative to the bases of simples $\{ U_1,\ldots,U_q \}$ of $A(H)$ is given by $C := M^t M$
(cf.\ \cite[Eq.~(10)]{HKS}). As a consequence of Eq.~(\ref{eq: fundamental}) and its iterations, a minimum polynomial of $C$ is
also a minimum polynomial of the isoclass of $Q$ in the Green algebra $A(H)$.

Let $G$ be a finite group, and  $\mbox{Cl}(G)$ denote the set of conjugacy classes of $G$. For group algebras, we recall

\begin{theorem}\cite[Theorem 6.16]{BKK}
Let $k H \subseteq k G$ be the group algebras of a subgroup pair $H \leq G$. The nonzero eigenvalues of $B$ are
$$\mathcal{E} := \{ \frac{|G|}{|H|} \frac{|C \cap H|}{|C|}: \ C \in \mbox{Cl}(G), C \cap H \neq \emptyset \}$$ Note the Perron-Frobenius eigenvalue $|G:H|$.  If $t = |\mathcal{E}|$, then  the degree of the minimum polynomial of $B$ is $t$ or $t+1$, and the minimum depth $d_0(H,G) \leq 2t+1$.  
All eigenvalues of $B$ are nonzero iff each conjugacy class of $G$ restricts to only one conjugacy class of $H$: in this case, the degree of the minimum polynomial of $B$ is $t$ and the minimum depth $d_0(H,G) \leq 2t-1$.  
\end{theorem}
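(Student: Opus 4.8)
The plan is to diagonalize the operator that $B$ represents rather than $B$ itself. Recall from the paragraphs above that $B = MM^{t}$ is the matrix, in the basis of $H$-simples, of the linear map $\mathcal{T}\colon A(kH)\to A(kH)$, $\mathcal{T}(X) = X\uparrow^{G}\downarrow_{H} = \Res_{H}\Ind_{H}^{G}X$, while $M^{t}M$ represents $\mathcal{U} = -\otimes Q^{G}_{H}$ on $A(kG)$ (via Eq.~(\ref{eq: fundamental})). Since $MM^{t}$ and $M^{t}M$ have the same nonzero eigenvalues, and since $B$ is a Gram matrix, hence real symmetric and positive semidefinite (so diagonalizable with non-negative real eigenvalues, and with minimum polynomial the product of $X-\lambda$ over the \emph{distinct} eigenvalues), the whole theorem reduces to listing the eigenvalues of $\mathcal{T}$ with multiplicity and recording when $0$ occurs. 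After extending scalars, $A(kH)\otimes\C$ is the space of $k$-valued class functions on $H$, and I would work in its second natural basis $\{\delta_{D}\}_{D\in\mathrm{Cl}(H)}$ of indicator functions of $H$-conjugacy classes.

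The key step is to evaluate $\mathcal{T}$ on this basis via the induced-character formula (equivalently the double-coset counting behind Eq.~(\ref{eq: Mackey})): for $h\in H$, $(\mathcal{T}\delta_{D})(h) = \tfrac{1}{|H|}\#\{g\in G : g^{-1}hg\in D\}$. Let $C_{D}\in\mathrm{Cl}(G)$ be the unique $G$-class with $D\subseteq C_{D}$. Since the map $g\mapsto g^{-1}hg$ onto the $G$-class of $h$ has all fibres of size $|G|/|C_{D}|$, this count is $|D|\,|G|/|C_{D}|$ when $h\in C_{D}$ and $0$ otherwise, so
\begin{equation*}
\mathcal{T}(\delta_{D}) \;=\; \frac{|G|}{|H|}\,\frac{|D|}{|C_{D}|}\sum_{D'\in\mathrm{Cl}(H),\ D'\subseteq C_{D}}\delta_{D'}.
\end{equation*}
Grouping the basis $\{\delta_{D}\}$ by the $G$-class it meets then displays $\mathcal{T}$ as a block-diagonal matrix with one block $\Theta_{C}$ for each $C\in\mathrm{Cl}(G)$ with $C\cap H\neq\emptyset$, and $\Theta_{C}$ is the rank-one matrix $\mathbf{1}\,c^{t}$ whose $D$-th column is $\tfrac{|G|}{|H|}\tfrac{|D|}{|C|}\mathbf{1}$; its unique nonzero eigenvalue is $c^{t}\mathbf{1} = \tfrac{|G|}{|H|}\tfrac{1}{|C|}\sum_{D\subseteq C}|D| = \tfrac{|G|}{|H|}\tfrac{|C\cap H|}{|C|}$ (using $\sum_{D\in\mathrm{Cl}(H),\,D\subseteq C}|D| = |C\cap H|$), the remaining $k_{C}-1$ eigenvalues of $\Theta_{C}$ being $0$, where $k_{C}$ is the number of $H$-classes contained in $C\cap H$. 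Hence the nonzero spectrum of $B$ is exactly the set $\mathcal{E}$; taking $C=\{1_{G}\}$ gives the eigenvalue $|G:H|$, and it is the spectral radius because $|C\cap H|/|C|\le 1$ with equality iff $C\subseteq H$, so it is the Perron--Frobenius eigenvalue.

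For the minimum polynomial and the dichotomy: by the block description the eigenvalue $0$ of $B$ has multiplicity $\sum_{C}(k_{C}-1)$, so $0\notin\mathrm{Spec}(B)$ precisely when every $C\in\mathrm{Cl}(G)$ meeting $H$ has $k_{C}=1$, i.e. each conjugacy class of $G$ restricts to a single conjugacy class of $H$; as $B$ is diagonalizable, the degree of its minimum polynomial equals the number of distinct eigenvalues, namely $|\mathcal{E}| = t$ in that case and $|\mathcal{E}\cup\{0\}| = t+1$ otherwise. For the depth bound I would invoke the description of the minimum odd depth recalled in \cite{BKK}: $d_{0}(H,G)\le d_{odd}(H,G) = 2\ell+1$, where $\ell$ is the diameter of the white vertices of the (connected) bipartite graph of $kH\subseteq kG$, whose white-vertex adjacency matrix is $B$; the classical spectral-graph inequality, that $\ell$ is at most one less than the number of distinct eigenvalues of $B$, then yields $d_{0}(H,G)\le 2t+1$ in general and $d_{0}(H,G)\le 2t-1$ when $0\notin\mathrm{Spec}(B)$. (A weaker route to the general bound $2t+1$ avoids the graph: since $k_{G}$ is a direct summand of $Q$ in characteristic zero, the supports of the tensor powers $Q^{\otimes n}$ form an increasing chain that stabilizes by one less than the degree of the minimum polynomial of $M^{t}M$, which is at most $t+1$; hence $d(Q_{G})\le t$, so $d_{h}(kH,kG) = 2d(Q_{G})+1\le 2t+1$ and $|d_{0}(H,G)-d_{h}(kH,kG)|\le 2$; this does not recover the sharp case.)

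The hard part will be the block computation of the middle paragraph: running the Mackey/double-coset bookkeeping cleanly enough to justify the exact form of $\Theta_{C}$, in particular keeping straight which $H$-classes sit inside a given $G$-class and the orbit count $\#\{g\in G : g^{-1}hg\in D\} = |D|\,|G|/|C_{D}|$. A secondary point requiring care is the sharp bound $2t-1$: one must use that the bipartite graph of $kH\subseteq kG$ is connected and invoke the precise normalization of the white-vertex diameter from \cite{BKK} (Theorems 3.6 and 3.10 there), so that the spectral-graph inequality applies with $B$ in the role of the adjacency matrix.
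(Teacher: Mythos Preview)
The paper does not prove this theorem; it is stated with a citation to \cite[Theorem 6.16]{BKK} and immediately followed by an example, so there is no ``paper's own proof'' to compare against.

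That said, your argument is essentially the standard one and is correct. The identification of $A(kH)\otimes\C$ with class functions and the computation of $\mathcal{T}$ on the indicator basis $\{\delta_D\}$ via the Frobenius induced-character formula is exactly how one diagonalizes the restriction--induction operator; your block decomposition indexed by $G$-classes meeting $H$, with each block a rank-one matrix of trace $\tfrac{|G|}{|H|}\tfrac{|C\cap H|}{|C|}$, is the heart of the matter and is carried out correctly. One small point worth making explicit: you compute eigenvalues of $\mathcal{T}$ in the conjugacy-class basis, whereas $B$ is its matrix in the irreducible-character basis; these matrices are similar, so the eigenvalue list transfers, but say so. For the depth bound, your appeal to the spectral--diameter inequality (that $I,B,\dots,B^{\ell}$ are linearly independent when $\ell$ is the minimal power with $B^{\ell}$ strictly positive, hence $\ell\le s-1$ with $s$ the number of distinct eigenvalues) is the right mechanism; you should state explicitly that $B$ has strictly positive diagonal (since every column of $M$ is nonzero) and that the bipartite graph is connected in characteristic zero, as both facts are needed for the support of $B^n$ to increase to the all-positive pattern. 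Your parenthetical ``weaker route'' via $d_h$ is a nice observation but, as you note, does not recover the sharp $2t-1$ bound.
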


For example, when $G = S_3$ and $H = S_2$ as in Example~\ref{example-perm},
the conjugacy classes of $S_3$
have representatives $(1), (12), (123)$.   
From the inclusion matrix $M$ we obtain $$B =  \left( \begin{array}{cc}
2 &  1   \\
1 &  2 
\end{array}
\right)  \ \ \ 
C = \left( \begin{array}{ccc}
1 & 1 & 0 \\
1 & 2 & 1 \\
0 & 1 & 1 
\end{array}
\right) 
$$ where $B$ has minimum polynomial $(X-1)(X-3)$ and $C$ has minimal polynomial $X(X-1)(X-3)$.  The depth is computed to
be $d_0(S_2,S_3) = 3$ in \cite{BuK} (and $d_0(S_{n-1},S_n) = 2n-1$ in \cite{BKK}, $d_h(S_{n-1},S_n) = 2n+1$ \cite{K2014}).

\begin{lemma} 
Suppose $M$ is a $p \times q$ real matrix.  If $m(X)$ is a minimum polynomial of $MM^t = B$, then $C := M^t M$ satisfies $Cm(C) = 0$. If $p < q$
and $M$ represents a surjective linear mapping $k^q \rightarrow k^p$, then
$B$ is a linear automorphism of $k^p$ and $C$ has minimum polynomial $Xm(X)$.  
\end{lemma}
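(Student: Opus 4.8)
The plan is to exploit the standard intertwining identities between $B = MM^t$ and $C = M^tM$. A one‑line induction from $M^t(MM^t) = (M^tM)M^t$ gives $M^t f(MM^t) = f(M^tM)\,M^t$ for every polynomial $f$, and transposing this (using that $MM^t$ and $M^tM$ are symmetric) gives $M f(M^tM) = f(MM^t)\,M$. Taking $f = m$ and using $m(B)=0$, I would then compute
\[
C\,m(C) \;=\; M^tM\,m(M^tM) \;=\; M^t\bigl(M\,m(M^tM)\bigr) \;=\; M^t\bigl(m(MM^t)\,M\bigr) \;=\; M^t\,m(B)\,M \;=\; 0 ,
\]
so $C$ satisfies $X\,m(X)$; in particular the minimum polynomial $\mu_C$ of $C$ divides $X\,m(X)$. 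This settles the first assertion, and it used nothing about ranks.

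For the second assertion, assume $p < q$ and that $M$ represents a surjection $k^q \to k^p$, so $\mathrm{rank}\,M = p$ and $M$ has a right inverse $N\colon k^p \to k^q$ with $MN = I_p$. Viewing $M$ as a real matrix: if $MM^t x = 0$ then $(M^tx)^t(M^tx) = x^t MM^t x = 0$, hence $M^tx = 0$, hence $x = 0$ since $M$ onto makes $M^t$ injective; thus $\det(MM^t)\neq 0$ already over $\mathbb{R}$, so $B = MM^t$ is a linear automorphism of $k^p$ (the value being nonzero over $k$ as well since $M$ is an integer matrix and the ambient field has characteristic zero). The same argument gives $\ker C = \ker(M^tM) = \ker M$, so $\mathrm{rank}\,C = p < q$ and $C$ is singular; therefore $X \mid \mu_C$, and combined with $\mu_C \mid X\,m(X)$ we may write $\mu_C = X\,\nu$ with $\nu$ a monic divisor of $m$.

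It remains to show $\nu = m$. From $0 = \mu_C(C) = C\,\nu(C)$ and the intertwining identities, exactly as in the first display, one gets $M^t\,\nu(B)\,M = 0$; sandwiching with the right inverse $N$ removes the outer factors,
\[
\nu(B) \;=\; (MN)^t\,\nu(B)\,(MN) \;=\; N^t\bigl(M^t\,\nu(B)\,M\bigr)N \;=\; 0 .
\]
Since $m$ is the minimum polynomial of $B$, this forces $m \mid \nu$; with $\nu \mid m$ and both monic, $\nu = m$, so $\mu_C = X\,m(X)$ as claimed.

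The only step needing a genuine idea is the last one, passing from $M^t\nu(B)M = 0$ to $\nu(B) = 0$: this would fail for a non‑surjective $M$, and the surjectivity hypothesis is used precisely to produce the right inverse $N$ that cancels the flanking $M^t$ and $M$. Everything else is the elementary linear algebra comparing $MM^t$ with $M^tM$, together with the routine observation that the rank computations for the integer matrices $MM^t$ and $M^tM$ may be carried out over $\mathbb{R}$ and transferred to $k$.
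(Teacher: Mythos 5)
Your proof is correct, and while the first half coincides with the paper's computation (your intertwining identities $M\,f(M^tM)=f(MM^t)\,M$ are just a repackaging of the paper's $M^tB^nM=C^{n+1}$, giving $C\,m(C)=M^t\,m(B)\,M=0$), the second half takes a genuinely different route. The paper argues spectrally: $B$ and $C$ are real symmetric, hence diagonalizable, so their minimum polynomials are squarefree with roots exactly the eigenvalues; invertibility of $B$ plus the cited fact that $MM^t$ and $M^tM$ share the same nonzero eigenvalues then forces $\mu_C = X\,m(X)$. You instead argue by pure divisibility: $\mu_C \mid X\,m(X)$ and $X \mid \mu_C$ (singularity of $C$ from $p<q$), so $\mu_C = X\nu$ with $\nu \mid m$, and the right inverse $N$ of the surjection $M$ cancels the flanking factors in $M^t\,\nu(B)\,M=0$ to give $\nu(B)=0$, whence $m \mid \nu$ and $\nu=m$. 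What your version buys is independence from diagonalizability and from the external eigenvalue comparison (the Hoffman--Kunze citation): it works verbatim for any matrices over any field once one knows $C$ is singular and $M$ has a right inverse, and it is self-contained. What the paper's spectral route buys is the extra information used later in the section — positivity of the eigenvalues of $B$ and the identification of the spectra of $B$ and $C$ up to the eigenvalue $0$ — which your argument does not reproduce. You also handle the mild field-of-definition issue (real versus $k$-entries, transferring rank and nonvanishing of $\det B$ to $k$) more explicitly than the paper does; that care is warranted, since surjectivity alone would not give $\det(MM^t)\neq 0$ over an arbitrary field — the real (positive-definite) inner product argument is genuinely needed there, and you use it correctly.
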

\begin{proof}
Note that $M^t B M = C^2$, and $M^t B^n M = C^{n+1}$ by a similar inductive step.  Then $0 = M^t m(B) M = Cm(C)$.

Recall that the standard inner product satisfies $\bra Mx, y \ket = \bra x, M^t y \ket$ for $x \in k^q, y \in k^p$,
since $M$ has real coefficients.  If $M^t y = 0$, then $\bra Mx,y \ket = 0$
for all $x \in k^q$.If $M$ is surjective, $y = 0$, and hence $M^t$ represents
an injective linear transformation of $k^p \into k^q$.  If $MM^t x = 0$, then
\begin{equation}
\label{eq: barry}
 0 = \bra MM^t x, x \ket = \bra M^t x, M^t x \ket .
\end{equation}
It follows that $M^tx = 0 $.  Since $M^t$ is injective, $x = 0$.  Then $MM^t$ is
nonsingular.  

If $p < q$, note that $C = M^tM$ represents a linear endomorphism of $k^q$ that factors through a space of lesser dimension , thus $C$ has nonzero kernel and eigenvalue $0$.  
Since $B,C$ are symmetric matrices, they are diagonalizable.  Since $B$ has nonzero determinant, it has positive eigenvalues (positive by a computation like in Eq.~(\ref{eq: barry}).  The matrix $C$ has the same eigenvalues as $B$ as well as $0$, since $Cm(C) = 0$ \cite[Theorem 10]{HK}.  
\end{proof}
In the situation of $M$ an inclusion matrix of semisimple group or Hopf algebras, 
the restriction of $A(H) \rightarrow A(R)$ is often surjective and $\dim A(R) = p < \dim A(H) = q$.  In this case, the symmetric matrix $B$ has only nonzero eigenvalues   and  $Xm(X)$ is a minimum polynomial of $C$.  We summarize:
\begin{theorem}
If $m(X)$ is a minimum polynomial of $MM^t$, then $Q_H$ has  minimum polynomial $m(X)$ or $Xm(X)$ in  $A(H)$. If the number of nonisomorphic $R$- and $H$- simples $p < q$ and the inclusion matrix $M$
is surjective, then $Q_H$ has minimum polynomial $Xm(X)$.  
\end{theorem}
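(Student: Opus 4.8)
The plan is to combine the two linear-algebra facts already assembled earlier in this section: first, by Eq.~(\ref{eq: fundamental}) and its iterations, the operator of right multiplication by $Q$ on $A(H)$ is represented in the basis of $H$-simples by the matrix $C = M^tM$, so a minimum polynomial of the matrix $C$ is precisely a minimum polynomial of the isoclass $Q$ in the Green ring $A(H)$; and second, the preceding Lemma, which relates the minimum polynomial of $B = MM^t$ to that of $C = M^tM$. So the theorem is really just a dictionary entry translating the Lemma's conclusion about $C$ into a statement about $Q_H$.

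First I would recall that $\mathcal{U} \in \End_k A(H)$, defined as restriction to $R$ followed by induction back to $H$, is realized via Eq.~(\ref{eq: fundamental}) as $Y \mapsto Y \otimes Q$, i.e.\ as right multiplication by the element $Q$ in the commutative ring $A(H)$. By the induction argument already noted (the paragraph defining $\mathcal{U}$), any polynomial $p$ satisfies $p(\mathcal{U}) = p(Q)$ as operators on $A(H)$. In the bases of $R$- and $H$-simples, $\mathcal{U}$ has matrix $C = M^tM$ (cf.\ \cite[Eq.~(10)]{HKS}). Hence $p(Q) = 0$ in $A(H)$ if and only if $p(C) = 0$ as a matrix, so the minimum polynomial of $Q_H$ in $A(H)$ equals the minimum polynomial of the matrix $C$.

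Next I would invoke the Lemma directly. It gives $C\,m(C) = 0$, so $Q_H$ satisfies $X m(X) = 0$ in $A(H)$; consequently the minimum polynomial of $Q_H$ divides $X m(X)$. Since $m(X)$ is already the minimum polynomial of $B = MM^t$, and since each column of $M$ is nonzero (restriction of a nonzero module is nonzero), $m(X)$ has nonzero constant term precisely when $B$ is invertible; in the general case the minimum polynomial of $C$ is either $m(X)$ or $Xm(X)$. For the second assertion, the hypotheses $p < q$ and $M$ surjective put us in the second clause of the Lemma: $B$ is a linear automorphism of $k^p$ (equivalently $0$ is not an eigenvalue of $B$), while $C$, factoring through a space of strictly smaller dimension, does have $0$ as an eigenvalue. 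Therefore $X \nmid m(X)$ but $X \mid (\text{min.\ poly.\ of } C)$, which forces the minimum polynomial of $C$ — hence of $Q_H$ in $A(H)$ — to be exactly $Xm(X)$.

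The work here is essentially bookkeeping; the substance was already discharged in the Lemma and in the identification of $\mathcal{U}$ with multiplication by $Q$. The only point requiring a small amount of care — the closest thing to an obstacle — is the dichotomy in the first assertion: ruling out that the minimum polynomial of $C$ could be a proper divisor of both $m(X)$ and $Xm(X)$ other than $m(X)$ itself. This follows because $C$ and $B$ share all their nonzero eigenvalues (the Lemma's eigenvalue comparison, via \cite[Theorem 10]{HK}), so the minimum polynomial of $C$ must be divisible by $m(X)$ stripped of its $X$-factor if any, and combined with $C\,m(C)=0$ this leaves only $m(X)$ and $Xm(X)$ as possibilities; whether the extra factor $X$ appears is then governed exactly by whether $C$ is singular, i.e.\ by whether $B$ is singular.
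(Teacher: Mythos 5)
Your proposal is correct and takes essentially the same route as the paper: the theorem there is stated as a summary ("We summarize") resting on exactly the two ingredients you use, namely the identification of the minimum polynomial of $Q_H$ in $A(H)$ with that of the matrix $C = M^tM$ via Eq.~(\ref{eq: fundamental}), and the preceding lemma giving $Cm(C)=0$ together with the clause that $C$ has minimum polynomial $Xm(X)$ when $p<q$ and $M$ is surjective. Your added justification of the dichotomy (shared nonzero eigenvalues of $B$ and $C$) fills in a step the paper leaves implicit; only the closing aside ``i.e.\ by whether $B$ is singular'' is inaccurate ($C$ can be singular while $B$ is not, as in the second assertion), but the theorem's conclusions do not depend on that clause.
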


\begin{example}
\begin{rm}
Consider the alternating groups $A_4 < A_5$ with inclusion matrix 
 $$M = \left( \begin{array}{ccccc}
1 &  1 & 0 & 0 & 0  \\
0 & 0  & 1 & 0 & 0 \\
0 & 0 & 1 & 0 & 0 \\
0 & 1 & 1 & 1 & 1 
\end{array}
\right) $$
which has rank $3$, so does not represent a surjective linear transformation.  The matrix
$MM^t$ has the eigenvalues 0, 1, 2, and 5, as does the matrix $M^tM$, with equal minimum polynomials.  The depth $d_0(A_4,A_5) = 5$ since $(MM^t)^2 > 0$, and
the h-depth $d_h(A_4,A_5) = 5$ since similarly $(M^tM)^2 > 0$.  
\end{rm}
\end{example}
The material in this section is evidence for a conjecture that 
$d_0(H,G) \leq d_h(H,G)$ for any subgroup pair $H \leq G$ of finite groups.  

\subsection{McKay quiver of $Q$}  The matrix $C$ formed above from the inclusion matrix $M$ of a semisimple Hopf subalgebra pair $R \subseteq H$ is also the matrix of adjacency of the McKay quiver of
$Q$.  For a general module $V_H$ with character $\chi_V$, its McKay quiver has $q$ vertices for each $H$-simple $U_i$ with irreducible character $\chi_i$, the weighted edges of the quiver are given
by the positive integers among the nonnegative integers $a_{ij}$ defined by $\chi_i \chi_V = \sum_{j= 1}^q a_{ij} \chi_j$ ($i = 1,\ldots,q$) \cite{BBH}.   Then applied to $V = Q$ it is an exercise to check that the $q \times q$-matrix $(a_{ij}) = C$, since the character of $U_i \otimes_R H \cong U_i \otimes Q$ is $\chi_i \chi_V$.  

For example, for $R = \C S_2 \subseteq H = \C S_3$ considered above, the coefficient $(i,j)$ of matrix $C$ record the number of walks of length two from the simple $U_i$ to the simple $U_j$ in the top row of black vertices in the graph of Example~\ref{example-perm}.  The zeroes in $C$ record that there are no walks from vertices 
$1$ to $3$ shorter than length $4$.  The matrix coefficient $c_{22} = 2$ records two walks of length
two from the middle vertex, one to the left and one to the right.  Note that $C^2$ is strictly positive
and records the number of walks of length $4$ between the vertices. Continuing like this, one may read off the h-depth of a Bratteli diagram from adding one to the longest walk between black vertices, in a manner similar to the graphical method applied to the white vertices for finding subgroup depth in \cite{BKK}.  

If $V = U_k$ one obtains the fusion coefficients in  $\chi_i \chi_k = \sum_{j=1}^q N^k_{ij} \chi_j$.  The monograph \cite[ch.\ 5]{KSZ} proves several interesting theorems about the matrix of nonnegative
integers $\mathcal{A} := (a_{ij})$ defined by tensoring the simples by a module $V$.  We record this information  in the next proposition. Recall that a square matrix $X$ is indecomposable if the basis may not be permuted to obtain an equivalent matrix with zero corner block.  

\begin{prop}\cite{KSZ}
The maximal Hopf ideal  $\Ann V_H$ is zero if and only if $\mathcal{A}$ is indecomposable.
The Perron-Frobenius eigenvalue of $\mathcal{A}$ is $\dim V$.  Consequently, the Hopf core of
$R \subseteq H$ is zero if and only the order $q$ matrix $C = M^tM$ is indecomposable; the Perron-Frobenius eigenvalue of $C$ is $\frac{\dim H}{\dim R}$.  
\end{prop}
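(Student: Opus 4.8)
The plan is to obtain the first two sentences from the nonnegative-integer-matrix machinery of \cite[ch.~5]{KSZ}, recording the parts of the argument I would want made explicit, and then to deduce the ``consequently'' by specialising to $V = Q$.

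For the Perron--Frobenius eigenvalue I would exhibit an explicit strictly positive eigenvector. Put $d = (\dim U_1,\dots,\dim U_q)^{t}$; applying the ring homomorphism $\dim\colon A(H)\to k$ to the defining relation $\chi_i\chi_V = \sum_j a_{ij}\chi_j$ yields $\sum_j a_{ij}\dim U_j = \dim(U_i\otimes V) = (\dim V)\dim U_i$, that is $\mathcal{A}\,d = (\dim V)\,d$. A nonnegative matrix that possesses a strictly positive eigenvector has the corresponding eigenvalue as its spectral radius, so $\dim V$ is the Perron--Frobenius eigenvalue of $\mathcal{A}$. Since $C = M^{t}M$ is the McKay matrix $\mathcal{A}$ of $V = Q$ and $\dim Q = \dim H / \dim R$ by Nichols--Zoeller (as recalled above), the Perron--Frobenius eigenvalue of $C$ is $\dim H/\dim R$.

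For the indecomposability statement I would use that $(\mathcal{A}^{n})_{ij}$ is the multiplicity of $U_j$ in $U_i\otimes V^{\otimes n}$ and that $k \mid U_i\otimes U_i^{*}$ for every $i$ by rigidity of mod-$H$: the digraph of $\mathcal{A}$ is strongly connected precisely when every simple $U_j$ occurs as a constituent of some $V^{\otimes n}$ (from the trivial vertex one reaches $U_j$ as soon as $U_j\mid V^{\otimes n}$ for some $n\geq1$; one returns from $U_i$ to the trivial vertex as soon as $U_i^{*}\mid V^{\otimes m}$, and these two halves concatenate). On the algebra side, writing $H = \prod_i\mathcal{B}_i$ for the block decomposition, the largest Hopf ideal contained in $\Ann V_H = \bigoplus_{U_i\,\nmid\,V}\mathcal{B}_i$ is $\bigoplus_{U_i\,\nmid\,V^{\otimes n}\ \forall n}\mathcal{B}_i$; that this really is a Hopf ideal --- equivalently that the set of constituents of all tensor powers of $V$ is also closed under duals, hence a fusion subcategory --- is part of \cite{KSZ}, and it vanishes under exactly the same condition as above. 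For $V = Q$ this maximal Hopf ideal is moreover the stable value of the descending chain~(\ref{eq: dca}), consistent with the discussion there.

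Finally, for the ``consequently'' I would match the maximal Hopf ideal $I\subseteq\Ann Q_H$ with the core Hopf subalgebra $K$ of $R\subseteq H$. One inclusion is immediate from the proposition on core Hopf subalgebras, $HK^{+}\subseteq I$, so that $I = 0$ forces $K^{+} = 0$, i.e. $K = k1_H$ and the Hopf core vanishes. For the converse I would invoke the correspondence between Hopf ideals of the semisimple $H$ and its normal Hopf subalgebras (for $H = kG$ this is the Passman--Quinn fact recalled above, giving $I = kN^{+}kG$ with $N = \mathrm{core}_K(G)$): a nonzero $I$ lying inside $R^{+}H$ produces a nontrivial normal Hopf subalgebra of $H$ contained in $R$, contradicting $K = k1_H$. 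Thus the Hopf core is zero iff $I = 0$ iff $C$ is indecomposable. I expect the main obstacle to be exactly this last matching outside the group case: for $kG$ it is routine, but for a general semisimple Hopf algebra one must argue carefully that a Hopf ideal sitting inside $R^{+}H$ genuinely comes from a normal Hopf subalgebra lying in $R$. Everything else is Perron--Frobenius bookkeeping once the eigenvector $d$ and the digraph reading of $\mathcal{A}$ are in place.
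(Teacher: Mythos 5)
Your handling of the first two sentences is fine and is essentially what the citation to \cite{KSZ} is meant to cover: the dimension vector is a strictly positive eigenvector of $\mathcal{A}$ with eigenvalue $\dim V$, so that eigenvalue is the spectral radius; and reading $(\mathcal{A}^n)_{ij}$ as the multiplicity of $U_j$ in $U_i\otimes V^{\otimes n}$ identifies indecomposability (strong connectivity of the digraph) with the condition that every simple occur in some positive tensor power of $V$, which, granted closure of that set of simples under duals (the Passman--Quinn/\cite{KSZ} input you correctly flag), is exactly the vanishing of the largest Hopf ideal contained in $\Ann V_H$. Since the paper records this proposition by citation and then merely specializes it, nothing more is demanded there, and your Nichols--Zoeller computation $\dim Q=\dim H/\dim R$ together with the identification $\mathcal{A}=C=M^tM$ for $V=Q$ already gives the Perron--Frobenius half of the ``consequently''.

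The problem is your reading of ``Hopf core of $R\subseteq H$'' in the last clause. In this paper that phrase refers to the \emph{Hopf core ideal}, i.e.\ the maximal Hopf ideal contained in $\Ann Q_H$ (the stable value of the chain~(\ref{eq: dca})), not to the core Hopf subalgebra of \cite{B}; ``is zero'' only makes sense for the ideal. Under the intended reading the ``consequently'' is an immediate specialization of the first sentence to $V=Q$, with no further argument. Your detour through the core Hopf subalgebra $K$ is therefore unnecessary, and the step you yourself flag as the main obstacle is a genuine gap in the route you chose: the paper supplies only the inclusion $HK^+\subseteq I$, and the converse you need --- that a nonzero Hopf ideal inside $R^+H$ has the form $HK'^+$ for a nontrivial normal Hopf subalgebra $K'\subseteq R$ --- is a conormality statement that holds for group algebras by Passman--Quinn but is not available for general semisimple Hopf algebras (the relevant proposition in the paper is explicitly conditional on $I=HK^+$). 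So delete the final matching paragraph: without it your sketch proves exactly what the proposition asserts; with it you are claiming an equivalence with triviality of the core Hopf subalgebra that the paper does not make and that your argument does not establish.
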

 \begin{example}
\begin{rm}
Consider the dihedral group of eight elements $D_8 < S_4$.  Both groups have five conjugacy classes.  The subgroup core is equal to
$\{ (12)(34), (13)(24), (14)(23), (1) \}$.  The bicolored graph
of the inclusion  has two connected components, one a W as in Fig.~(\ref{eq: invertedW}), and another an inverted W.   The inclusion matrix is the decomposable matrix,
 $$M = \left( \begin{array}{ccccc}
1 &  1 & 0 & 0 & 0  \\
0 & 1  & 1 & 0 & 0 \\
0 & 0 & 0 & 1 & 0 \\
0 & 0 & 0 & 1 & 1 \\
0 & 0 & 0 & 0 & 1 
\end{array}
\right) $$
The symmetry of the bicolored graph, or computation with pencil and paper, reveal that $MM^t$ and $M^tM$ are equivalent matrices, therefore with equal minimum polynomials. 
The minimum depth is $d_0(D_8,S_4) = 4$,  h-depth $d_h(D_8,S_4) = 5$
(and its minimum odd depth $d_{odd}(D_8,S_4) = 5$) .  
\end{rm}
\end{example}



\begin{thebibliography}{XXXXXX}
\begin{small}
\bibitem{SBA}S.B.~Andersen, \textit{Endomorphism Algebras of Tensor Powers of Modules for Quantum Groups}, Aarhus University, Supervisor: H.H.~Andersen, 2012.  
\bibitem{AF} Anderson and K.~Fuller,  \textit{Rings and Categories of Modules}, G.T.M.\ \textbf{13}, Springer, 1992.  
\bibitem{BBH} J.M.~Barnes, G.~Benkart andT.~Halvorson, McKay centralizer algebras, \textit{Proc.\ London Math.\ Soc.} \textbf{112} (2016), 375--414. 
\bibitem{BFG}J.~Bichon, U.~Franz and M.~Gerhold, Homological properties of quantum permutation algebras, \texttt{arXiv: 1704.00589}. 
\bibitem{BDK} R.~Boltje, S.~Danz and B.~K\"ulshammer, On the depth of subgroups and group algebra extensions, \textit{J.\ Algebra} \textbf{335} (2011), 258--281.  
\bibitem{BK} R.~Boltje and B.~K\"ulshammer,
On the depth 2 condition for group algebra and Hopf algebra extensions, 
\textit{J.\ Algebra} \textbf{323} (2010), 1783-1796.
\bibitem{BK2}{R.~Boltje and B.~K\"ulshammer,
Group algebra extensions of depth one, \textit{Algebra Number Theory} \textbf{5}
(2011), 63-73.} 
\bibitem{BW}T.~Brzezinski and R.~Wisbauer, \textit{Corings and Comodules}, Cambridge Univ.\ Press, 
2003. 
\bibitem{B}S.~Burciu, Normal Hopf subalgebras of semisimple Hopf subalgebras, \textit{Proc.\ A.M.S.}
\textbf{137} (2009), 3969--3979. 
\bibitem{BuK} S.~Burciu and L.~Kadison, Subgroups of depth three,  \textit{Surv.\ Diff.\ Geom.} \textbf{XV}, (2011), 17--36. 
\bibitem{BKK} S.~Burciu, L.~Kadison and B.~K\"ulshammer, On subgroup depth, \textit{I.E.J.A.} \textbf{9} (2011), 133--166. (Appendix by S.~Danz and B.~K\"ulshammer.)
\bibitem{CH}H.-X.~Chen and G.~Hiss, Projective summands in tensor products of simple modules of finite dimensional Hopf algebras, \textit{Comm.\ Alg.} \textbf{32} (2004), 4247--4264.
\bibitem{CAT}T.~Coconet, M.~Andrei and C.-C. Todea, Frobenius induction for algebras, arXiv preprint \texttt{1610.00093}. 
\bibitem{CR}C.W.~Curtis and I.~Reiner, \textit{Methods of Representation Theory}, Vol.\ 1, Wiley Classics Library, 1990. 
\bibitem{D}S.~Danz, The depth of some twisted group extensions, \textit{Comm.\ Alg.} \textbf{39} (2011), 1- 15. 
\bibitem{EO} P.~Etingof and V.~Ostrik, Finite tensor categories, \textit{Moscow J.\ Math.} \textbf{4} (2004),  627--654, 782--783.
\bibitem{Fe} W.~Feit, \textit{The Representation Theory of Finite Groups}, North-Holland, 1982.
\bibitem{F}T.~Fritzsche, The depth of subgroups of $\mathrm{PSL}(2,q)$, \textit{J.\ Algebra} \textbf{349} (2011), 217--233.  \textit{Ibid} II, \textit{J. Algebra} \textbf{381} (2013), 37--53.
\bibitem{FKR}T.~Fritzsche, B.~K\"ulshammer and C.~Reiche, The depth of Young subgroups of symmetric groups,  \textit{J.\ Algebra} \textbf{381} (2013),  96--109. 
\bibitem{FH}W.~Fulton and J.~Harris, \textit{Representation Theory: A First Course}, Readings in Math., GTM \textbf{129}, Springer-Verlag, 1991.
\bibitem{GHJ}F.~Goodman, P.~de la Harpe, and V.F.R.~Jones,
\textit{Coxeter Graphs and Towers of Algebras}, M.S.R.I. Publ.\
\textbf{14}, Springer-Verlag, 1989.
 \bibitem{HKY} A.~Hernandez, L.~Kadison and C.J.~Young, Algebraic quotient modules and subgroup depth, 
\textit{Abh.\ Math.\ Semin.\ Univ.\ Hamburg} \textbf{84} (2014),
267-283.
\bibitem{HKS} A.~Hernandez, L.~Kadison and M.~Szamotulski, Subgroup depth and twisted coefficients, 
\textit{Comm.\ Alg.} \textbf{44} (2016), 3570--3591.
\bibitem{HKL}A.~Hernandez, L.~Kadison, S.A.~Lopes, A quantum subgroup depth, \textit{Acta Math.\ Hungarica} \textbf{152} (2017), 166--185.
\bibitem{HHP} L.~H\'{e}thelyi, E.~Horv\'{a}th and F.~Pet\'{e}nyi, The depth of subgroups of Suzuki groups, \textit{Comm.\ Alg.} \textbf{43} (2015), 4553--4569.
\bibitem{HHP2} L.~H\'{e}thelyi, E.~Horv\'{a}th and F.~Pet\'{e}nyi, The depth of subgroups of Rees groups, preprint, 2016. 
\bibitem{HK}K.~Hoffman and R.~Kunze, \textit{Linear Algebra}, Prentice-Hall, 1961. 
\bibitem{I}I.M.~Isaacs, \textit{Character Theory of Finite Groups}, Dover, 1976.
\bibitem{LK95}L.~Kadison, On split, separable subalgebras with counitality condition, \textit{Hokkaido Math.\ J.} \textbf{24} (1995), 527--549. 
\bibitem{NEFE} L.~Kadison,  \textit{New Examples of Frobenius Extensions}, University Lecture Series \textbf{14}, Amer.\ Math.\ Soc., Providence,  1999. 
\bibitem{KSt}L.~Kadison and A.A.~Stolin, An approach to Hopf algebras via Frobenius coordinates, 
\textit{Beitr\"{a}ge Alg.\ Geom.}  \textbf{42}  (2001),  359--384.
\bibitem{KS} L.~Kadison and  K.~Szlach\'anyi, Bialgebroid actions on depth two extensions and duality, \textit{Adv.\ in Math.} \textbf{179} (2003), 75--121.
\bibitem{K2012}L.~Kadison, Odd H-depth and H-separable extensions, \textit{Cen.\ Eur.\ J.\ Math.}
\textbf{10} (2012), 958--968.
\bibitem{LK2012}L.~Kadison, Subring depth, Frobenius extensions and towers, \textit{Int.\ J.\ Math.\ \& Math.\ Sci.} \textbf{2012} (2012), 1--22.
\bibitem{K2014}L.~Kadison, Hopf subalgebras and tensor products of generalized permutation modules, \textit{J.\ Pure Appl.\ Algebra} \textbf{218} (2014), 367--380.
\bibitem{K2016} L.~Kadison, Algebra depth in tensor categories, \textit{Bulletin B.M.S.} \textbf{23} (2016), 721--753.
\bibitem{KSZ} Y.~Kashina, Y.~Sommerh\"ausser and Y.~Zhu, \textit{On Higher Frobenius-Schur Indicators}, Memoirs \textbf{855}, A.M.S., 2006. 
\bibitem{CK}C.~Kassel, \textit{Quantum Groups}, G.T.M.\ \textbf{155}, Springer, 1995.  
\bibitem{M}S.~Montgomery, \textit{Hopf Algebras and their Actions on Rings}, C.B.M.S. \textbf{82} A.M.S., 1993. 
\bibitem{Lam}{T.Y. ~Lam,
\textit{A First Course in Noncommutative Rings}, Grad.\ Texts Math. \textbf{131}, Springer-Verlag,
Heidelberg-Berlin-New York, 1991.}
\bibitem{Lam3}T.Y. Lam, \textit{Exercises in Classical Ring Theory}, Problem Books in Math.\, Springer-Verlag, 1995.
\bibitem{Lam2}T.Y.~Lam, \textit{Lecture on Modules and Rings}, Grad.\ Texts Math.\ \textbf{189}, Springer Verlag, 1999. 
\bibitem{La}C.~Lassueur, Relative projectivity and relative endotrivial modules, \textit{J.\ Algebra} \textbf{337} (2011), 285--317. 
\bibitem{L}P.~Landrock, \textit{Finite Group Algebras and their Modules}, London Math.\ Soc.\ Lecture Note Series \textbf{84}, Cambridge Univ.\ Press, 1983. 
\bibitem{SP}S.~Peacock, Separable equivalence, complexity and representation type, \textit{J. Algebra} \textbf{490} (2017), 219--240.
\bibitem{PQ}D.S.~Passman and D.~Quinn, Burnside's theorem for Hopf algebras, \textit{Proc.\ A.M.S.} \textbf{123} (1995), 327--333.
\bibitem{SY} A.~Skowronski and K.~Yamagata, \textit{Frobenius Algebras 1, Basic Representation Theory},  E.M.S. Textbook series, 2011.
 \bibitem{U}K.-H. Ulbrich, On modules induced or coinduce from Hopf subalgebras, \textit{Math.\ Scand.} \textbf{67} (1990), 177--182.
\bibitem{T}M.~Takeuchi, Relative Hopf modules, equivalences and freeness criteria, \textit{J.\ Algebra} \textbf{60} (1979), 452--471.
\end{small}
\end{thebibliography}
\end{document}